
\pdfoutput=1

\documentclass[a4paper, twoside, article,english,leqno,10pt]{memoir}

\usepackage[utf8]{inputenc} 
\usepackage[T1]{fontenc} 	
\usepackage{babel}
\usepackage[noDcommand,slantedGreeks]{kpfonts}

\frenchspacing

\title{Equivariant sheaves on loop spaces}
\date{\today}
\author{Sergey Arkhipov and Sebastian Ørsted}

\usepackage{mathtools, microtype,xspace}

\usepackage[shortlabels]{enumitem}

\usepackage[draft]{fixme}

\pagestyle{plain} 
\nouppercaseheads 
\captionnamefont{\small} 
\captiontitlefont{\small}
\makeevenhead{headings}{\thepage}{}{\itshape\leftmark} 
\makeoddhead{headings}{\itshape\rightmark}{}{\thepage}

\raggedbottomsectiontrue

\makeatletter
\g@addto@macro\bfseries{\boldmath} 
\makeatother

\usepackage[autostyle,german=guillemets,english=american]{csquotes}

\usepackage{tikz}

\usetikzlibrary{babel}

\usetikzlibrary{cd}

\tikzcdset{
  arrow style=tikz,
}

\tikzset{
  >/.tip={Stealth[length=2.9pt, width=4.4pt, inset=1.8pt]},
  tikzcd left hook/.tip={Hooks[
	  left,
	  length=2pt,
	  width=5.5pt,
	]},
  iso/.style={
    every to/.append style={
      edge node={
        node [sloped, allow upside down]{
          \raisebox{0.1em}[0pt][0pt]{\ensuremath{\sim}}
        }
      }
    }
  },
  iso'/.style={
    every to/.append style={
      edge node={
        node [sloped, allow upside down]{
          \raisebox{-0.6em}[0pt][0pt]{\ensuremath{\sim}}
        }
      }
    }
  },
  symbol/.style={
      draw=none,
      every to/.append style={
        edge node={node [sloped, allow upside down, auto=false]{$#1$}}}
  },
}

\newcommand\invdot{\mathrlap{.}}
\newcommand\invcomma{\mathrlap{,}}

\usetikzlibrary{patterns}
\usetikzlibrary{decorations.pathreplacing,decorations.markings}

\tikzset{
  on each segment/.style={
    decorate,
    decoration={
      show path construction,
      moveto code={},
      lineto code={
        \path [#1]
        (\tikzinputsegmentfirst) -- (\tikzinputsegmentlast);
      },
      curveto code={
        \path [#1] (\tikzinputsegmentfirst)
        .. controls
        (\tikzinputsegmentsupporta) and (\tikzinputsegmentsupportb)
        ..
        (\tikzinputsegmentlast);
      },
      closepath code={
        \path [#1]
        (\tikzinputsegmentfirst) -- (\tikzinputsegmentlast);
      },
    },
  },
  mid arrow/.style={postaction={decorate,decoration={
        markings,
        mark=at position .5 with {\arrow[#1]{stealth}}
      }}},
}

\usepackage[backend=biber,citestyle=authoryear-ibid,bibstyle=authoryear,
	uniquename=init,giveninits=true,autolang=hyphen]{biblatex}
\addbibresource{ms.bib} 
\nobibintoc 

\usepackage[thmmarks,amsmath]{ntheorem}

\makeatletter

\makeatletter
\newtheoremstyle{notitle}
{\item[\hskip\labelsep \theorem@headerfont ##2\theorem@separator]}%
{\item[\hskip\labelsep \theorem@headerfont ##2\theorem@separator\ ##3\theorem@separator]}

\newtheoremstyle{notitlebreak}
{\item[\rlap{\vbox{\hbox{\hskip\labelsep \theorem@headerfont ##2\theorem@separator}\hbox{\strut}}}]}%
{\item[\rlap{\vbox{\hbox{\hskip\labelsep \theorem@headerfont ##2\theorem@separator\ ##3\theorem@separator}\hbox{\strut}}}]}

\newtheoremstyle{prenumber}%
  {\item[\hskip\labelsep \theorem@headerfont ##2\theorem@separator\ ##1\theorem@separator]}%
  {\item[\hskip\labelsep \theorem@headerfont ##2\theorem@separator\ ##3\theorem@separator]}

\newtheoremstyle{prenumberbreak}%
  {\item[\rlap{\vbox{\hbox{\hskip\labelsep \theorem@headerfont 
          ##2\theorem@separator\ ##1\theorem@separator}\hbox{\strut}}}]}%
  {\item[\rlap{\vbox{\hbox{\hskip\labelsep \theorem@headerfont 
          ##2\theorem@separator\ ##3\theorem@separator}\hbox{\strut}}}]}

\newtheoremstyle{customplain}%
  {\item[\hskip\labelsep \theorem@headerfont ##1\ ##2\theorem@separator]}%
  {\item[\hskip\labelsep \theorem@headerfont ##3\ ##2\theorem@separator]}
  
\newtheoremstyle{custombreak}%
  {\item[\rlap{\vbox{\hbox{\hskip\labelsep \theorem@headerfont
          ##1\ ##2\theorem@separator}\hbox{\strut}}}]}%
  {\item[\rlap{\vbox{\hbox{\hskip\labelsep \theorem@headerfont 
          ##3\ ##2\theorem@separator}\hbox{\strut}}}]}

\newtheoremstyle{postnumpara}%
  {\item[\hskip\labelsep \theorem@headerfont ##2\theorem@separator]}%
  {\item[\hskip\labelsep \theorem@headerfont ##3\ ##2\theorem@separator]}
  
\newtheoremstyle{postnumparabreak}%
  {\item[\rlap{\vbox{\hbox{\hskip\labelsep \theorem@headerfont
          ##2\theorem@separator}\hbox{\strut}}}]}%
  {\item[\rlap{\vbox{\hbox{\hskip\labelsep \theorem@headerfont 
          ##3\ ##2\theorem@separator}\hbox{\strut}}}]}

\newtheoremstyle{nonumberplainflex}
	{\item[\theorem@headerfont\hskip\labelsep ##1\theorem@separator]}%
	{\item[\theorem@headerfont\hskip \labelsep ##3\theorem@separator]}

\newtheoremstyle{nonumberbreakflex}%
	{\item[\rlap{\vbox{\hbox{\hskip\labelsep \theorem@headerfont
		##1\theorem@separator}\hbox{\strut}}}]}%
	{\item[\rlap{\vbox{\hbox{\hskip\labelsep \theorem@headerfont 
		##3\theorem@separator}\hbox{\strut}}}]}

\makeatother

\theoremseparator{.}

\theorembodyfont{\normalfont}

\theoremstyle{notitle}

\theoremstyle{prenumber}

\theorembodyfont{\itshape}

\newtheorem{theorem}[equation]{Theorem}
\newtheorem{corollary}[equation]{Corollary}
\newtheorem{proposition}[equation]{Proposition}
\newtheorem{lemma}[equation]{Lemma}

\theorembodyfont{\normalfont}
\theoremsymbol{\ensuremath{\triangle}}
\newtheorem{remark}[equation]{Remark}

\theoremsymbol{\ensuremath{\scriptstyle\bigcirc}}
\newtheorem{example}[equation]{Example}

\theoremstyle{nonumberplainflex}
\theoremsymbol{\ensuremath{\square}}
\theoremheaderfont{\itshape\bfseries}
\newtheorem{proof}{Proof}


\theoremstyle{nonumberplainflex}
\theoremheaderfont{\normalfont\bfseries}
\theoremsymbol{}

\theorembodyfont{\normalfont}

\theorembodyfont{\itshape}
\newtheorem{theoremnonumber}{Theorem}
\newtheorem{corollarynonumber}{Corollary}

\theorembodyfont{\normalfont}
\theoremsymbol{\ensuremath{\triangle}}

\theoremsymbol{\ensuremath{\scriptstyle\bigcirc}}


\theoremsymbol{}
\theoremheaderfont{\normalfont\bfseries}

\theoremstyle{notitlebreak}

\theoremstyle{prenumberbreak}

\theorembodyfont{\itshape}
\newtheorem{theorembreak}[equation]{Theorem}

\theorembodyfont{\normalfont}
\theoremsymbol{\ensuremath{\triangle}}

\theoremsymbol{\ensuremath{\scriptstyle\bigcirc}}

\theoremsymbol{}

\theoremstyle{nonumberbreakflex}
\theoremsymbol{\ensuremath{\square}}
\theoremheaderfont{\itshape\bfseries}


\theoremstyle{nonumberbreakflex}
\theoremheaderfont{\normalfont\bfseries}
\theoremsymbol{}
\theorembodyfont{\normalfont}

\theorembodyfont{\itshape}

\theorembodyfont{\normalfont}
\theoremsymbol{\ensuremath{\triangle}}

\theoremsymbol{\ensuremath{\scriptstyle\bigcirc}}

\usepackage[hidelinks]{hyperref}
	
\hypersetup{
	pdfauthor={Sergey Arkhipov and Sebastian Ørsted},
	pdftitle={Equivariant sheaves on loop spaces},
}
	

\usepackage[nameinlink]{cleveref}

\crefname{para}{}{}
\crefname{theorem}{Theorem}{Theorems}
\crefname{definition}{Definition}{Definitions}
\crefname{example}{Example}{Examples}
\crefname{corollary}{Corollary}{Corollaries}
\crefname{proposition}{Proposition}{Propositions}
\crefname{lemma}{Lemma}{Lemmata}
\crefname{remark}{Remark}{Remarks}
\crefname{claim}{Claim}{Claims}
\crefname{exercise}{Exercise}{Exercises}
\crefname{textexercise}{Exercise}{Ecercises}

\crefalias{parabreak}{para}

\crefalias{theorembreak}{theorem}

\crefalias{definitionbreak}{definition}

\crefalias{examplebreak}{example}

\crefalias{corollarybreak}{corollary}

\crefalias{propositionbreak}{proposition}

\crefalias{lemmabreak}{lemma}

\crefalias{claimbreak}{claim}

\crefalias{remarkbreak}{remark}

\crefalias{textexercisebreak}{textexercise}


	

\newlist{paralist}{enumerate}{2}
\setlist[paralist]{
	label=\textup{(\roman*)},
	ref=\thepara\textup{(\roman*)},
	resume,
}
\crefalias{paralisti}{theorem}

\newlist{theoremlist}{enumerate}{2}
\setlist[theoremlist]{
	label=\textup{(\roman*)},
	ref=\thepara\textup{(\roman*)},
	resume,
}
\crefalias{theoremlisti}{theorem}

\newlist{corollarylist}{enumerate}{2}
\setlist[corollarylist]{
	label=\textup{(\roman*)},
	ref=\thepara\textup{(\roman*)},
	resume,
}
\crefalias{corollarylisti}{corollary}

\newlist{propositionlist}{enumerate}{2}
\setlist[propositionlist]{
	label=\textup{(\roman*)},
	ref=\thepara\textup{(\roman*)},
	resume,
}
\crefalias{propositionlisti}{proposition}

\newlist{lemmalist}{enumerate}{2}
\setlist[lemmalist]{
	label=\textup{(\roman*)},
	ref=\thepara\textup{(\roman*)},
	resume,
}
\crefalias{lemmalisti}{lemma}

\newlist{definitionlist}{enumerate}{2}
\setlist[definitionlist]{
	label=\textup{(\roman*)},
	ref=\thepara\textup{(\roman*)},
	resume,
}
\crefalias{definitionlisti}{definition}

\newlist{textexerciselist}{enumerate}{2}
\setlist[textexerciselist]{
	label=\textup{(\roman*)},
	ref=\thepara\textup{(\roman*)},
	resume,
}
\crefalias{textexerciselisti}{textexercise}

\newlist{remarklist}{enumerate}{2}
\setlist[remarklist]{
	label=\textup{(\roman*)},
	ref=\thepara\textup{(\roman*)},
	resume,
}
\crefalias{remarklisti}{remark}

\newlist{examplelist}{enumerate}{2}
\setlist[examplelist]{
	label=\textup{(\roman*)},
	ref=\thepara\textup{(\roman*)},
	resume,
}
\crefalias{examplelisti}{example}

\newlist{exerciselist}{enumerate}{2}
\setlist[exerciselist]{
	label=\textup{(\roman*)},
	ref=\thepara\textup{(\roman*)},
	resume,
}
\crefalias{exerciselisti}{exercise}

\newlist{claimlist}{enumerate}{2}
\setlist[claimlist]{
	label=\textup{(\roman*)},
	ref=\thepara\textup{(\roman*)},
	resume,
}
\crefalias{claimlisti}{claim}


\makeatletter
\newcounter{localreftmpcnt} %
\newcommand\alphsubformat[1]{\textup{(\roman{#1})}} 
\newcommand\localref[2][\alphsubformat]{%
	\ifcsname r@#2@cref\endcsname
	\cref@getcounter {#2}{\mylabel}%
	\setcounter{localreftmpcnt}{\mylabel}%
	\hyperref[#2]{%
		\alphsubformat{localreftmpcnt}%
	}%
	\else ?? \fi}

\newcommand\isoto{\xrightarrow{ 
		\smash{\raisebox{-0.65ex}{\ensuremath{\scriptstyle\sim}}}}}

\newcommand\longisoto{\xrightarrow{ 
		\;\smash{\raisebox{-0.65ex}{\ensuremath{\scriptstyle\sim}}}\;}}

\providecommand*{\twoheadrightarrowfill@}{%
  \arrowfill@\relbar\relbar\twoheadrightarrow
}
\providecommand*{\twoheadleftarrowfill@}{%
  \arrowfill@\twoheadleftarrow\relbar\relbar
}
\providecommand*{\xtwoheadrightarrow}[2][]{%
  \ext@arrow 0579\twoheadrightarrowfill@{#1}{#2}%
}
\providecommand*{\xtwoheadleftarrow}[2][]{%
  \ext@arrow 5097\twoheadleftarrowfill@{#1}{#2}%
}

\newcommand\xto[1]{
	\xrightarrow{\smash{\raisebox{-.2em}[0pt][0pt]{$\scriptscriptstyle#1$}}}
}

\newcommand\longto\longrightarrow

\newcommand\into{\hookrightarrow}

\makeatother

\let\textdef=\textbf

\makeatother

\newcommand\slot{\mathord{-}}

\newcommand\smallbullet{\raisebox{-0.25ex}{\scalebox{1.2}{$\cdot$}}}

\newcommand\ainfty{\ensuremath{\mathup{A}_{\infty}}\xspace}

\newcommand\categoryformat[1]{\textup{#1}}

\newcommand\dirlimformat[1]{
		\mathop{
			\smash{
				\operatorname*{#1}\limits_{
					{}
					\raisebox{.21em}[0pt][0pt]{\scalebox{.85}{$
						\xrightarrow{%
							\hphantom
							{%
								\!\!\!\!
								\scalebox{1.17}{$
									{\operatorname{#1}}
								$}
								\!\!\!
							}
						}
					$}}
				}
			}
			\vphantom{\textstyle\lim_n}
		}
}

\newcommand\invlimformat[1]{
		\mathop{
			\smash{
				\operatorname*{#1}\limits_{
					{}
					\raisebox{.21em}[0pt][0pt]{\scalebox{.85}{$
						\xleftarrow{%
							\hphantom
							{%
								\!\!\!\!
								\scalebox{1.17}{$
									{\operatorname{#1}}
								$}
								\!\!\!
							}
						}
					$}}
				}
			}
			\vphantom{\textstyle\lim_n}
		}
}

\numberwithin{equation}{section}

\begin{document}

\maketitle

\begin{abstract}
	\noindent
	Let~\( X \) be an affine, smooth, and Noetherian scheme over~\( \mathbb{C} \)
	acted on by an affine algebraic group~\( G \). Applying the technique developed in \textcite{ends,comodules}, we define a dg-model for the derived category of dg-modules over the dg-algebra of differential forms~\( \Omega_{X} \) on~\( X \) equivariant with respect to the action of a derived group scheme \( (G ,\Omega_{G} ) \). We compare the obtained dg-category with the one considered in \textcite{tina} given by coherent sheaves on the derived Hamiltonian reduction of~\( T^{*} X  \).
\end{abstract}

\chapter{Introduction}

Let~\( G \) be an affine algebraic group acting on an affine, smooth, and Noetherian scheme~\( X \) over~\( \mathbb{C} \). In the recent paper \textcite{tina}, the first author jointly studied the category of quasi-coherent sheaves on the derived Hamiltonian reduction of~\( T^{*} X  \)
with respect to~\( G \). As stated in the introduction to that paper, this category is supposed to play the role of a category of equivariant sheaves on the derived loop space of the scheme~\( X \). The present paper is a step towards making this intuition into a precise construction.

Omitting equivariance, the picture is well understood. Namely, in
homotopy theory, the free loop space of an object~\( X \) (say, in a model category) is the homotopy fibre product of~\( X \) with itself over~\( X \times X \).
Thus, for regular schemes, the derived loop space of a scheme~\( X \) is given by the sheafified Hochschild homology dg-algebra for~\( \mathscr{O}_{X} \).

The famous Hochschild--Kostant--Rosenberg theorem states that the latter
is formal: it is quasi-isomorphic to its cohomology given by the sheaf of dg-algebras of differential forms on~\( X \) with zero differential. Denote it by~\( \Omega_{X} \). It follows that talking about sheaves on the derived loop space
of~\( X \) is essentially considering quasi-coherent sheaves of dg-modules
over~\( \Omega_{X} \).
One of the goals of the present paper is to make precise sense of the
derived category of \( \Omega_{X} \)-dg-modules equivariant with respect to the action of the derived group scheme~\( (G ,\Omega_{G} ) \).

Our strategy is as follows. We consider the simplicial derived scheme given by the nerve of the
action groupoid for~\( (G ,\Omega_{G} ) \) acting on~\( (X ,\Omega_{X} ) \). This way, we obtain a
cosimplicial diagram of dg-derived categories. We consider the homotopy totalization of this diagram. More precisely, we follow the strategy of~\textcite{explicit} generalized and extended in our
previous papers~\textcite{ends,comodules}. We obtain a category of \ainfty-comodules over a certain dg-coalgebra in the category of \( \Omega_{X} \)-dg-modules. This category becomes our model for the derived category of \( \Omega_{X} \)-dg-modules equivariant with respect to \( \Omega_{G} \).

Applying linear Koszul duality in the spirit of the papers of Mirkovic
and Riche, we pass from \( \Omega_{X} \)-dg-modules to the \enquote{even} side of the duality. We consider the sheaf of dg-algebras on~\( X \) given
by~\(
	\operatorname{Sym}_{\mathscr{O}_{X}} (T_{X} [{-2}] )
\)
and quasi-coherent dg-modules over it. We call the obtained category
the derived category of quasi-coherent sheaves on the \emph{homologically
shifted} cotangent bundle of~\( X \).

After applying Koszul duality, the
\( (G ,\Omega_{G} ) \)-equivariance becomes a more subtle structure.
Notice that \( (G ,\Omega_{G} ) \) contains~\( (G ,\mathscr{O}_{G} ) \) both as a subgroup and as a
quotient group. The Koszul duality construction respects the~\( (G ,\mathscr{O}_{G} ) \)-action.
The remaining coaction of the Hopf dg-algebra of left invariant differential forms on~\( G \) becomes the following structure. Denote the Lie
algebra of~\( G \) by~\( \mathfrak{g} \). The moment map for the \( G \)-action on~\( T^{*} X  \) provides a \( G \)-equivariant map~\(
	\mathfrak{g} \mathbin{\otimes} \mathscr{O}_{X} [{-1}]\to T_{X}
\). Consider the free graded
commutative algebra generated by the two term complex
\[
	\operatorname{Sym}_{\mathscr{O}_{X}} (\mathfrak{g} \mathbin{\otimes}{\mathscr{O}_{X} [{-1}]} \to{T_{X} [{-2}]} ).
\]
Notice that up to the homological shift, this dg-algebra is a model
for functions on the derived preimage of~\( 0 \) under the moment map \( \mu  \colon T^{*} X  \to \mathfrak{g}^{*} \). We come to the central statement of the present paper:

\begin{theoremnonumber}[\ref{res:main_theorem}. Theorem]
	Let~\( X \) ba an affine, smooth, and Noetherian scheme over~\( \mathbb{C} \) acted on by an affine group scheme~\( G \).
	The derived category of \( (G ,\Omega_{G} ) \)-equivariant \( \Omega_{X} \)-dg-modules is equivalent to the triangulated subcategory
	\[
		\langle \mathscr{O}_{X} \rangle
		\subset
		\mathscr{D}
		\bigl(
			\operatorname{Sym}_{\mathscr{O}_{X}} (\mathfrak{g} \mathbin{\otimes} \mathscr{O}_{X} [{-1}] \to{T_{X} [{-2}]} )
			\categoryformat{-dgmod}
		\bigr)^{G}
	\]
	of the derived category of \( G \)-equivariant dg-modules
	generated by \( \mathscr{O}_{X} \) and closed under small coproducts.
\end{theoremnonumber}

Notice that the case of~\( X \) equal to a point is of interest. In this case, the statement, combined with the usual Koszul duality between
\( \operatorname{Sym} (\mathfrak{g} [{-1}] ) \) and~\( \operatorname{Sym} (\mathfrak{g}^{*} ) \),
reads as follows:

\begin{corollarynonumber}
	The derived category of representations for the group~\( (G ,\Omega_{G} ) \) is equivalent to the derived category of \( G \)-equivariant quasicoherent sheaves on~\( \mathfrak{g} \) topologically supported at~\( 0 \).
\end{corollarynonumber}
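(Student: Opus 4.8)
The strategy is to specialize \cref{res:main_theorem} to the case $X = \operatorname{Spec}\mathbb{C}$ and then compose the resulting equivalence with the classical Koszul duality between $\operatorname{Sym}(\mathfrak{g}[{-1}])$ and $\operatorname{Sym}(\mathfrak{g}^{*})$, the only substantial point being the translation of the finiteness hypothesis across that duality.

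First I would spell out the degeneration of the two sides of \cref{res:main_theorem} when $X$ is a point. Here $\mathscr{O}_X = \mathbb{C}$ and $T_X = 0$, so $\Omega_X = \mathbb{C}$ sits in degree zero; hence a $(G, \Omega_G)$-equivariant $\Omega_X$-dg-module is simply a representation of $(G, \Omega_G)$, and the left-hand side becomes the derived category of such representations. On the other side, the two-term complex $\mathfrak{g}\mathbin{\otimes}\mathscr{O}_X[{-1}]\to T_X[{-2}]$ has vanishing target and therefore reduces to $\mathfrak{g}[{-1}]$, with $\mathfrak{g}$ placed in a single odd degree. As a free graded-commutative algebra on an odd space, $\operatorname{Sym}(\mathfrak{g}[{-1}])$ is the exterior algebra $\Lambda^{\bullet}\mathfrak{g}$. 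Thus \cref{res:main_theorem} already yields an equivalence between the derived category of $(G,\Omega_G)$-representations and the derived category of $G$-equivariant $\operatorname{Sym}(\mathfrak{g}[{-1}])$-dg-modules with $\mathbb{C}$-coherent---that is, finite-dimensional---cohomology.

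Next I would invoke Koszul duality. The Koszul dual of $\operatorname{Sym}(\mathfrak{g}[{-1}])$ is $\operatorname{Sym}(\mathfrak{g}^{*})$, the algebra of polynomial functions on $\mathfrak{g}$, so that its dg-modules are complexes of quasi-coherent sheaves on $\mathfrak{g}$. Because the duality is controlled by the evaluation pairing $\mathfrak{g}\mathbin{\otimes}\mathfrak{g}^{*}\to\mathbb{C}$, which is equivariant for the adjoint and coadjoint $G$-actions, the entire construction is compatible with the $G$-equivariant structures, and the equivalence restricts to $G$-equivariant objects on both sides.

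The step I expect to require the most care is the matching of finiteness with support. The spectrum of $\operatorname{Sym}(\mathfrak{g}[{-1}])$ is an infinitesimal \enquote{fat point}, so coherence there amounts to pure finite-dimensionality, whereas $\mathfrak{g}$ is a genuine affine space on which coherent sheaves may be supported anywhere; Koszul duality must therefore trade finite-dimensionality on the exterior side for a support condition on the symmetric side. I would make this precise by observing that the augmentation ideal $\mathfrak{m}\subset\operatorname{Sym}(\mathfrak{g}^{*})$ of functions vanishing at the origin acts on the dual complex through the Koszul differential, and by checking that finite-dimensionality of the cohomology over $\Lambda^{\bullet}\mathfrak{g}$ forces this action to be locally nilpotent---equivalently, that the cohomology is coherent and set-theoretically supported at $0\in\mathfrak{g}$. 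Granting this dictionary, composing the specialized form of \cref{res:main_theorem} with Koszul duality identifies the derived category of $(G,\Omega_G)$-representations with the derived category of $G$-equivariant coherent sheaves on $\mathfrak{g}$ topologically supported at $0$, as asserted.
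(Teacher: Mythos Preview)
Your proposal is correct and follows exactly the route the paper indicates: the paper does not prove this corollary separately but merely remarks, immediately before stating it, that it is the main theorem specialized to \(X=\operatorname{Spec}\mathbb{C}\) ``combined with the usual Koszul duality between \(\operatorname{Sym}(\mathfrak{g}[-1])\) and \(\operatorname{Sym}(\mathfrak{g}^{*})\).'' Your write-up is in fact more detailed than the paper's one-line justification, and your identification of the finiteness--support translation as the point requiring care is apt, since that is precisely the content the paper leaves to the phrase ``usual Koszul duality.''
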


\begingroup

\chapter{Derived categories and \texorpdfstring{\ainfty}{A∞}-modules}

In this chapter, we discuss and compare different candidates for derived categories of dg-modules over a dg-algebra~\( A \).

\section{The dg-derived category}

Let~\( k \) be a field of characteristic zero.
If \( A \)~is a dg-algebra over~\( k \),
recall that its \textdef{derived dg-category}
is defined to be the Drinfeld localization~\(
	\mathscr{D} (\categoryformat{$A $-dgmod} )
	\cong
	\categoryformat{$A $-dgmod} [{W^{-1}}]
\)
at the class~\( W \) of quasi-isomorphisms
\parencite[see][]{drinfeld}.
Its associated homotopy category~\( 
	H^{0} \mathscr{D} (\categoryformat{$A $-dgmod} )
\)
recovers the conventional derived category
obtained via the machinery of triangulated categories.

Let~\( M \) be a dg-module over the dg-algebra~\( A \).
We say that it is \textdef{graded projective} if it is projective as a graded module over the graded \( k \)-algebra~\( A \).
The full dg-subcategory of graded projective modules is denoted~\( \categoryformat{Proj} (A ) \subset \categoryformat{$A $-dgmod} \).
The dg-module~\( M \) is called~\textdef{homotopy-projective} (or~simply~\textdef{h-projective})
if
for any exact dg-module~\( X \) over~\( A \),
the \( k \)-complex~\( \operatorname{Hom}_{A}^{\smallbullet} (M ,X ) \)
is exact.
The full dg-subcategory of such is denoted~\( \categoryformat{H-Proj} (A ) \subset \categoryformat{$A $-dgmod} \).
A dg-module~\( M \) is called \textdef{semifree}
if it admits an ascending, bounded below, exhaustive filtration~\(
	0
	=
	F^{0} M
	\subset
	F^{1} M
	\subset
	F^{2} M
	\subset\cdots\subset
	M
\)
such that each graded piece~\( \mathup{gr}^{n} M \)
is the direct sum of shifts of copies of~\( A \).
We denote the full subcategory of such
by~\( \categoryformat{SF} (A ) \subset \categoryformat{$A $-dgmod} \).
The dg-subcategory of \textdef{quasi-free}
dg-modules is defined to be the dg-subcategory~\(
	\categoryformat{QF} (A ) = \categoryformat{Proj} (A ) \cap \categoryformat{H-Proj} (A )
\)
of dg-modules which are simultaneously graded projective and h-projective.
Note that any semifree dg-modules is quasi-free, so \( \categoryformat{SF} (A ) \subset \categoryformat{QF} (A ) \).

\begin{proposition}
	Each map in the composition
	\[
		\categoryformat{SF} (A )
		\subset
		\categoryformat{QF} (A )
		\subset
		\categoryformat{H-Proj} (A )
		\to
		\mathscr{D} (\categoryformat{$A $-dgmod} )
	\]
	is a quasi-equivalence of dg-categories.
	In particular, the first three all present the dg-derived category of~\( \categoryformat{$A $-dgmod} \).
\end{proposition}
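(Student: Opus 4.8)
The plan is to verify, for each of the three arrows, the two conditions defining a quasi-equivalence of dg-categories: that it induces quasi-isomorphisms on all Hom-complexes (quasi-full-faithfulness), and that the induced functor on homotopy categories is essentially surjective. Since \( \categoryformat{SF} (A ) \subset \categoryformat{QF} (A ) \subset \categoryformat{H-Proj} (A ) \) are inclusions of full dg-subcategories, their Hom-complexes agree on the nose, so quasi-full-faithfulness is automatic there and only essential surjectivity remains; for the localization functor \( \categoryformat{H-Proj} (A ) \to \mathscr{D} (\categoryformat{$A $-dgmod} ) \) both conditions require an argument. Two facts will drive everything. First, every dg-module~\( M \) admits a semifree resolution, i.e.\ a quasi-isomorphism \( P \to M \) with \( P \in \categoryformat{SF} (A ) \); I would construct this by the standard transfinite attachment of free generators killing cohomology, which is where the only real limit-theoretic care is needed in the unbounded setting. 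Second, I would record the elementary but crucial lemma that an h-projective dg-module which is acyclic is contractible: if~\( C \) is h-projective, then \( \operatorname{Hom}_{A}^{\smallbullet} (C ,C ) \) is acyclic as soon as~\( C \) is, so \( \mathrm{id}_{C} \) is null-homotopic.

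From the lemma I get the key consequence that any quasi-isomorphism \( f \colon P \to Q \) between h-projective modules is a homotopy equivalence: the class \( \categoryformat{H-Proj} (A ) \) is closed under mapping cones (the relevant \( \operatorname{Hom}^{\smallbullet} \) sits in a long exact sequence between acyclic complexes), so \( \operatorname{cone} (f ) \) is an acyclic h-projective, hence contractible, which forces~\( f \) to be invertible in~\( H^{0} \). Essential surjectivity then follows uniformly: given any object, represented by a dg-module~\( M \), choose a semifree resolution \( P \to M \). For the localization functor this is a quasi-isomorphism, hence an isomorphism in \( \mathscr{D} (\categoryformat{$A $-dgmod} ) \), and \( P \in \categoryformat{SF} (A ) \subset \categoryformat{H-Proj} (A ) \); for the two inclusions one applies the homotopy-equivalence consequence to the resolution of a quasi-free, resp.\ h-projective, module, whence~\( M \) becomes isomorphic in~\( H^{0} \) to the semifree object~\( P \), which lies in the image. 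Thus all three functors hit every isomorphism class.

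It remains to show that the localization functor is quasi-fully-faithful, i.e.\ that for h-projective \( P ,Q \) the canonical map \( \operatorname{Hom}_{A}^{\smallbullet} (P ,Q ) \to \operatorname{Hom}_{\mathscr{D}}^{\smallbullet} (P ,Q ) \) is a quasi-isomorphism. Here I would pass to cohomology and use that \( \mathscr{D} (\categoryformat{$A $-dgmod} ) \) is pretriangulated with shift, so that \( H^{n} \operatorname{Hom}_{\mathscr{D}}^{\smallbullet} (P ,Q ) \cong \operatorname{Hom}_{H^{0} \mathscr{D}} (P ,Q [n] ) \), while correspondingly \( H^{n} \operatorname{Hom}_{A}^{\smallbullet} (P ,Q ) \cong \operatorname{Hom}_{H^{0} (\categoryformat{$A $-dgmod} )} (P ,Q [n] ) \). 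Writing \( K (A ) = H^{0} (\categoryformat{$A $-dgmod} ) \) for the homotopy category and \( D (A ) = H^{0} \mathscr{D} (\categoryformat{$A $-dgmod} ) \) for the conventional derived category recalled above, the comparison map becomes, in each degree~\( n \), the localization map \( \operatorname{Hom}_{K (A )} (P ,Q [n] ) \to \operatorname{Hom}_{D (A )} (P ,Q [n] ) \). This is exactly the classical statement that h-projective (K-projective) modules compute morphisms in the derived category, so it is an isomorphism for all~\( n \), and quasi-full-faithfulness follows.

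I expect this last step to be the main obstacle, since it is the only place where one must control the Hom-complexes of the Drinfeld localization rather than merely its homotopy category; concretely, one must know that the h-projective objects are the cofibrant objects computing the derived mapping complexes, which is the dg-enhancement of the K-projective resolution theorem and can be extracted from the universal property established in \textcite{drinfeld}. The other genuinely nontrivial ingredient is the existence of semifree resolutions for unbounded modules, which underwrites every essential-surjectivity claim.
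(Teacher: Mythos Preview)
Your argument is correct and complete; each step is sound, including the closure of h-projectives under cones, the contractibility of acyclic h-projectives, and the use of semifree resolutions for essential surjectivity.

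The paper's own proof, however, is much shorter and proceeds differently in structure. It simply cites \textcite{drinfeld} for the classical fact that both \( \categoryformat{SF}(A) \) and \( \categoryformat{H-Proj}(A) \) present the derived dg-category, and then observes that since \( \categoryformat{QF}(A) \) sits as a full dg-subcategory between two dg-categories which are each quasi-equivalent to \( \mathscr{D}(\categoryformat{$A$-dgmod}) \), it too must be quasi-equivalent---a two-out-of-three argument applied to the composite \( \categoryformat{SF}(A) \hookrightarrow \categoryformat{QF}(A) \hookrightarrow \categoryformat{H-Proj}(A) \). What you have done is essentially unpack the content of that citation: your semifree-resolution construction, the contractibility lemma, and the identification of derived Hom-spaces via K-projectives are precisely the ingredients underlying the results the paper invokes as ``classical.'' Your approach buys a self-contained proof; the paper's buys brevity and a clean reduction of the new case (\( \categoryformat{QF} \)) to the two known ones via the sandwich argument, without having to verify anything about \( \categoryformat{QF}(A) \) directly.
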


\begin{proof}
	The fact that \( \categoryformat{SF} (A ) \) and~\( \categoryformat{H-Proj} (A ) \)~present the derived
	category is classical, see e.g. \textcite{drinfeld}. Since \( \categoryformat{QF} (A ) \)~sits
	between them as a full dg-subcategory, this implies that it, too,
	presents~\( \mathscr{D} (\categoryformat{$A $-dgmod} ) \).
\end{proof}

Suppose that \( R \)~is another dg-algebra over~\( k \).
A \textdef{dg-algebra} over~\( R \) is a dg-algebra~\( A \)
over~\( k \) together with a map of \( k \)-dg-algebras~\( R \to A \) (note that we do \emph{not} assume that its image is contained in the centre in any way).
Note that this is equivalent to~\( A \) being a unital algebra object in the monoidal dg-category~\( (\categoryformat{$R$-mod-$R$},\mathbin{\otimes_{R}} ) \).
A map of dg-algebras over~\( R \) is a map preserving this structure.

\begin{lemma}\label{lemma:restriction_preserves_H-Proj}
	Let~\( A \) be a dg-algebra over the dg-algebra~\( R \) which is projective (resp.~h-projective resp.~quasi-free) as a left \( R \)-dg-module.
	Then restriction
	\[
		\categoryformat{$A $-dgmod}
		\to
		\categoryformat{$R $-dgmod}
	\]
	takes projective (resp.~h-projective resp.~quasi-free)
	\( A \)-dg-modules to
	projective (resp.~h-projective resp.~quasi-free)
	\( R \)-dg-modules.
\end{lemma}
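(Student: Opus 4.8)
The plan is to treat the three properties in turn, noting first that the quasi-free case requires no separate argument: since $\categoryformat{QF} (A ) = \categoryformat{Proj} (A ) \cap \categoryformat{H-Proj} (A )$, and likewise over~$R$, once restriction is known to preserve graded projectivity and h-projectivity it automatically preserves their intersection. For the graded projective case I would invoke the summand-of-free characterization. Writing $\operatorname{Res}$ for the restriction functor, if $M$ is projective as a graded $A$-module then it is a direct summand of a graded free module $\bigoplus_{i} A [n_{i}]$. Restricting to~$R$ and using that $A$ is graded projective over~$R$, each shifted copy $A [n_{i}]$ is graded projective over~$R$; as arbitrary direct sums and direct summands of graded projectives are again graded projective, $\operatorname{Res} M$ is graded projective over~$R$.

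For the h-projective case the key tool is the coinduction adjunction between restriction and $\operatorname{Hom}_{R} (A ,\slot )$, which at the dg-level supplies a natural isomorphism of complexes of $k$-vector spaces
\[
	\operatorname{Hom}_{R}^{\smallbullet} (\operatorname{Res} M ,X )
	\cong
	\operatorname{Hom}_{A}^{\smallbullet} \bigl(M ,\operatorname{Hom}_{R}^{\smallbullet} (A ,X ) \bigr)
\]
for every $R$-dg-module~$X$, where $\operatorname{Hom}_{R}^{\smallbullet} (A ,X )$ carries its canonical left \( A \)-dg-module structure coming from the right regular action of~$A$ on itself. Now suppose $X$ is exact. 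Because $A$ is h-projective over~$R$, the complex $\operatorname{Hom}_{R}^{\smallbullet} (A ,X )$ is exact; hence it is an exact \( A \)-dg-module, exactness being a property of the underlying complex only. Since $M$ is h-projective over~$A$, the right-hand side is therefore exact, and so is the left-hand side. As $X$ ranges over all exact $R$-dg-modules, this shows that $\operatorname{Res} M$ is h-projective over~$R$, and the quasi-free case then follows from the intersection observation above.

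The only real work lies in the h-projective step, and specifically in establishing the displayed formula as an isomorphism of \emph{complexes} rather than merely of graded vector spaces: one must check that the coinduction adjunction is compatible with the differentials and that the left \( A \)-action on $\operatorname{Hom}_{R}^{\smallbullet} (A ,X )$ is the one rendering the isomorphism \( A \)-linear. Once this bookkeeping is in place the argument is entirely formal, and the hypotheses match the conclusions exactly: graded projectivity of $A$ over~$R$ feeds the projective case, while h-projectivity of $A$ over~$R$ is precisely what forces $\operatorname{Hom}_{R}^{\smallbullet} (A ,X )$ to be acyclic whenever $X$ is, which is what the h-projective case needs.
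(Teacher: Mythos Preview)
Your argument is correct and matches the paper's approach: both hinge on the coinduction adjunction $\operatorname{Hom}_{R}(M|_{R},\slot)\cong\operatorname{Hom}_{A}(M,\operatorname{Hom}_{R}(A,\slot))$, which is exactly the formula the paper records as its entire proof. The only minor difference is that the paper applies this single isomorphism uniformly to all three cases (graded projectivity of~$A$ over~$R$ makes $\operatorname{Hom}_{R}(A,\slot)$ exact on graded modules, so the adjunction also yields the projective case directly), whereas you handle graded projectivity via the summand-of-free description; both routes are standard and equally short.
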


\begin{proof}
	For \( M \in \categoryformat{$A $-dgmod} \), this follows from the
	observation that
	\[
		\operatorname{Hom}_{R} (M|_{R},\slot )
		\cong
		\operatorname{Hom}_{A} (M ,\operatorname{Hom}_{R} (A ,\slot ) ).
	\]
\end{proof}

\begin{lemma}\label{res:tensors_of_quasi-free}
	If \( A \)~is a dg-algebra, then
	tensor products of
	projective (resp.~h-projective resp.~quasi-free) modules
	are
	projective (resp.~h-projective resp.~quasi-free).
\end{lemma}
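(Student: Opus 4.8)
The statement should be read with the external tensor product: for $A$-dg-modules $M$ and $N$, the product $M \otimes_k N$ carries the evident left action of $A \otimes_k A$, and the claim is that this $A \otimes_k A$-dg-module is projective (resp.\ h-projective resp.\ quasi-free) whenever $M$ and $N$ are. The same argument in fact handles $M \otimes_k N$ over $A \otimes_k A'$ when $M$ and $N$ are modules over two possibly different dg-algebras, so I would prove it in that slightly greater generality. The graded projective case I would dispose of directly: realizing $M$ and $N$ as graded direct summands of graded free modules $\bigoplus_i A[n_i]$ and $\bigoplus_j A[m_j]$ exhibits $M \otimes_k N$ as a graded direct summand of $\bigoplus_{i,j} (A \otimes_k A)[n_i + m_j]$, which is graded free over $A \otimes_k A$.

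The essential point is the h-projective case, and this is where I expect the real work to lie. The tool is the currying adjunction
\[
	\operatorname{Hom}_{A \otimes_k A}^{\smallbullet}(M \otimes_k N, X)
	\cong
	\operatorname{Hom}_A^{\smallbullet}\bigl(M, \operatorname{Hom}_A^{\smallbullet}(N, X)\bigr),
\]
valid as an isomorphism of complexes of $k$-vector spaces for every $A \otimes_k A$-dg-module $X$: the inner $\operatorname{Hom}$ is formed along the second tensor factor of $A \otimes_k A$ and inherits a residual $A$-action from the first factor, along which the outer $\operatorname{Hom}$ is then taken. Given an exact $X$, I would first use h-projectivity of $N$---noting that $X$, viewed along the second factor, is an exact $A$-dg-module because exactness depends only on the underlying complex of $k$-vector spaces---to conclude that $\operatorname{Hom}_A^{\smallbullet}(N, X)$ is acyclic. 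By the same insensitivity of exactness to the remembered module structure, this inner $\operatorname{Hom}$ is an exact $A$-dg-module for its residual action, so h-projectivity of $M$ forces the outer $\operatorname{Hom}$ to be acyclic, whence $M \otimes_k N$ is h-projective over $A \otimes_k A$. The two things that must be handled with care are the bookkeeping of the two commuting $A$-actions on $X$ in the adjunction (in particular, checking that the residual first-factor action really makes $\operatorname{Hom}_A^{\smallbullet}(N,X)$ into a left $A$-dg-module), and the standard but load-bearing fact that the adjunction holds on the nose at the level of full $\operatorname{Hom}$-complexes rather than merely after passing to cohomology.

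Finally, since $\categoryformat{QF}(A \otimes_k A) = \categoryformat{Proj}(A \otimes_k A) \cap \categoryformat{H-Proj}(A \otimes_k A)$ by definition, the quasi-free case follows at once by combining the two cases above: $M \otimes_k N$ is simultaneously graded projective and h-projective over $A \otimes_k A$, hence quasi-free, and no separate argument is needed.
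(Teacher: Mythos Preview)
You have misread the statement. The tensor product in question is the \emph{internal} product \(M\otimes_A N\) over~\(A\), landing back in left \(A\)-modules, not the external product \(M\otimes_k N\) over \(A\otimes_k A\). This is visible from the paper's one-line proof, which invokes the adjunction
\[
	\operatorname{Hom}_A(M\otimes_A N,\slot)\cong\operatorname{Hom}_A(M,\operatorname{Hom}_A(N,\slot)),
\]
and from the way the lemma is used later: in the proof of \cref{lemma:CobBar_H-Proj} one needs that the iterated products \(A^{\otimes_R n}\otimes_R M\) are quasi-free as \(R\)-modules, i.e.\ closure of \(\categoryformat{QF}(R)\) under~\(\otimes_R\). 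Your external statement, while correct, does not give this: h-projectivity of \(M\otimes_k N\) over \(A\otimes_k A\) does not descend to h-projectivity of \(M\otimes_A N\) over~\(A\).

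The fix is straightforward and structurally identical to what you wrote: use the internal tensor--hom adjunction above. If \(X\) is exact, h-projectivity of \(N\) makes \(\operatorname{Hom}_A(N,X)\) exact, and then h-projectivity of \(M\) makes the outer hom exact; the graded-projective case follows from the same adjunction (or from your direct-summand argument, adapted to \(\otimes_A\)). One caveat the paper leaves implicit: for \(M\otimes_A N\) to be a left \(A\)-module and for \(\operatorname{Hom}_A(N,\slot)\) to carry a residual left \(A\)-action, one needs either \(A\) graded-commutative or one of \(M,N\) to be an \(A\)-bimodule---both of which hold in every application in the paper.
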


\begin{proof}
	Apply the adjunction statement
	\[
		\operatorname{Hom}_{A} (M \mathbin{\otimes_{A}} N ,\slot )
		\cong
		\operatorname{Hom}_{A} (M ,\operatorname{Hom}_{A} (N ,\slot ) )
		.
	\]
\end{proof}

\begin{lemma}\label{res:scalar_extension_of_quasi-free}
	If \( A \to B \)~is a map of dg-algebras,
	then scalar extension of projective (resp.~h-projective resp.~quasi-free) \( B \)-dg-modules produces
	projective (resp.~h-projective resp.~quasi-free) \( A \)-dg-modules.
\end{lemma}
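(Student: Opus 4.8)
The plan is to analyse the scalar-extension functor $B \mathbin{\otimes_{A}} \slot \colon \categoryformat{$A$-dgmod} \to \categoryformat{$B$-dgmod}$ by the same adjunction bookkeeping that settled the two preceding lemmas, treating the three properties in turn. Everything rests on the extension--restriction adjunction in its complex-enriched form
\[
	\operatorname{Hom}_B^{\smallbullet}(B \mathbin{\otimes_{A}} M, X)
	\cong
	\operatorname{Hom}_A^{\smallbullet}(M, X|_A),
\]
natural in the $A$-dg-module $M$ and the $B$-dg-module $X$, where $X|_A$ denotes the restriction of $X$ along $A \to B$.

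I would dispose of the graded-projective case first, since it is purely formal and never sees the differential. A graded-projective $A$-dg-module is a retract of a free graded module, that is, of a direct sum of shifts of $A$. The additive functor $B \mathbin{\otimes_{A}} \slot$ carries $A$ to $B$, hence carries such a free module to the corresponding direct sum of shifts of $B$, and it preserves retracts; so it sends graded projectives to graded projectives.

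The substance lies in the h-projective case, and here the displayed adjunction does all the work. Let $M$ be h-projective over $A$ and let $X$ be an arbitrary exact $B$-dg-module. Restriction along $A \to B$ leaves the underlying complex of $k$-vector spaces untouched, so $X|_A$ is exact over $A$; since $M$ is h-projective, $\operatorname{Hom}_A^{\smallbullet}(M, X|_A)$ is exact, and the adjunction transports this to exactness of $\operatorname{Hom}_B^{\smallbullet}(B \mathbin{\otimes_{A}} M, X)$. As $X$ was arbitrary, $B \mathbin{\otimes_{A}} M$ is h-projective over $B$. The quasi-free case then follows with no further argument, since $\categoryformat{QF} = \categoryformat{Proj} \cap \categoryformat{H-Proj}$ and both constituents have just been handled.

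The only genuine subtlety — the main obstacle, such as it is — is to confirm that the adjunction isomorphism above is one of \emph{complexes} and not merely of the underlying graded vector spaces, i.e.\ that it respects differentials and grading shifts, so that exactness really passes across it. Once this enrichment of the adjunction is recorded, the proof is as short and formal as those of \cref{lemma:restriction_preserves_H-Proj} and \cref{res:tensors_of_quasi-free}.
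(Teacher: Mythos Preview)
Your argument is correct and rests on exactly the same extension--restriction adjunction that the paper invokes; the paper's one-line proof simply records
\(
	\operatorname{Hom}_{A}(A\mathbin{\otimes_{B}}M,N)\cong\operatorname{Hom}_{B}(M,N|_{B})
\)
(with the roles of \(A\) and \(B\) swapped relative to yours, matching a slip in the direction of the map in the stated lemma) and leaves the reader to unpack the three cases. Your version merely makes that unpacking explicit---the direct-sum/retract argument for graded projectivity, the exactness transfer for h-projectivity, and the intersection for quasi-freeness---so there is no substantive difference in approach.
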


\begin{proof}
	Follows from~\(
		\operatorname{Hom}_{A} (A \mathbin{\otimes_{B}} M ,N )
		\cong
		\operatorname{Hom}_{B} (M ,N|_{B} )
	\).
\end{proof}

Our main tool will be the dg-category~\( \categoryformat{QF} (A ) \).
The convenience of working with this comes from the following observation:

\begin{proposition}
	The dg-category~\( \categoryformat{QF} (A ) \)
	is closed under \textdef{filtered extensions}.
	This means that if~\( M \in \categoryformat{$A $-dgmod} \)
	admits a bounded below and exhaustive filtration~\(
		0 = F^{0} M
		\subset
		F^{1} M
		\subset
		F^{2} M
		\subset
		\cdots
		\subset
		M = \dirlimformat{lim}{F^{n} M}
	\)
	such that each graded piece~\( \mathup{gr}^{n} M \in \categoryformat{QF} (A ) \),
	then \( M \in \categoryformat{QF} (A ) \) as well.
\end{proposition}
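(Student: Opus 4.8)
The plan is to check the two defining conditions of \( \categoryformat{QF} (A ) = \categoryformat{Proj} (A ) \cap \categoryformat{H-Proj} (A ) \) for~\( M \) one at a time. The single fact driving both arguments is that, since each~\( \mathup{gr}^{n} M \) is graded projective, every short exact sequence
\[
	0 \to F^{n-1} M \to F^{n} M \to \mathup{gr}^{n} M \to 0
\]
splits as a sequence of graded \( A \)-modules (forgetting differentials).

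For graded projectivity I would induct on~\( n \): choosing graded splittings~\( \sigma_{i} \colon \mathup{gr}^{i} M \to F^{i} M \), one identifies~\( F^{n} M \) with~\( \bigoplus_{i=1}^{n} \mathup{gr}^{i} M \) as graded modules. Since the filtration is bounded below and exhaustive, passing to the colimit gives a graded isomorphism~\( M \cong \bigoplus_{i \geq 1} \mathup{gr}^{i} M \), a direct sum of graded projectives, hence graded projective. Only the underlying graded structure matters here, so there is no need for the~\( \sigma_{i} \) to respect differentials.

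For h-projectivity, fix an exact \( A \)-dg-module~\( X \). Applying the Hom-complex to~\( M = \dirlimformat{lim}{F^{n} M} \) yields~\( \operatorname{Hom}_{A}^{\smallbullet} (M ,X ) \cong \invlimformat{lim}{\operatorname{Hom}_{A}^{\smallbullet} (F^{n} M ,X )} \). Applying the contravariant functor~\( \operatorname{Hom}_{A}^{\smallbullet} (\slot ,X ) \) to the graded-split sequences above produces short exact sequences of complexes
\[
	0 \to \operatorname{Hom}_{A}^{\smallbullet} (\mathup{gr}^{n} M ,X ) \to \operatorname{Hom}_{A}^{\smallbullet} (F^{n} M ,X ) \to \operatorname{Hom}_{A}^{\smallbullet} (F^{n-1} M ,X ) \to 0 .
\]
Starting from the base case~\( F^{1} M = \mathup{gr}^{1} M \in \categoryformat{H-Proj} (A ) \) and running the long exact cohomology sequence, an induction on~\( n \) shows every~\( \operatorname{Hom}_{A}^{\smallbullet} (F^{n} M ,X ) \) is exact.

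The main obstacle is the passage to the inverse limit, since inverse limits do not in general preserve exactness. This is overcome by noting that the transition maps~\( \operatorname{Hom}_{A}^{\smallbullet} (F^{n} M ,X ) \to \operatorname{Hom}_{A}^{\smallbullet} (F^{n-1} M ,X ) \) are degreewise surjective, again by the graded splittings, so the tower~\( C_{n} = \operatorname{Hom}_{A}^{\smallbullet} (F^{n} M ,X ) \) is Mittag--Leffler and its derived inverse limit vanishes. Concretely, I would invoke the short exact sequence of complexes~\( 0 \to \invlimformat{lim}{C_{n}} \to \prod_{n} C_{n} \xrightarrow{\,1 - t\,} \prod_{n} C_{n} \to 0 \), where~\( t \) is assembled from the transition maps and surjectivity of the last arrow is exactly the Mittag--Leffler condition; since arbitrary products of exact complexes of \( k \)-vector spaces are exact, the outer terms of the resulting long exact sequence vanish and force~\( \operatorname{Hom}_{A}^{\smallbullet} (M ,X ) \cong \invlimformat{lim}{C_{n}} \) to be exact. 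Thus~\( M \) is h-projective, and combined with the first part this gives~\( M \in \categoryformat{QF} (A ) \).
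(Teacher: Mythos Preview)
Your argument is correct and follows essentially the same route as the paper: first use graded projectivity of the~\( \mathup{gr}^{n} M \) to split the filtration quotients at the level of graded modules and identify~\( M \) with~\( \bigoplus_{n} \mathup{gr}^{n} M \), then run the induction on the short exact Hom-sequences and finish with the Mittag--Leffler argument for the inverse limit. The paper also records an alternative to the Mittag--Leffler step, namely the telescope presentation~\( 0 \to \bigoplus F^{n} M \xrightarrow{\mathup{id}-s} \bigoplus F^{n} M \to M \to 0 \), which after applying~\( \operatorname{Hom}_{A}^{\smallbullet} (\slot ,X ) \) achieves the same conclusion; your explicit~\( 1-t \) sequence on products is the dual formulation of this and is equally valid.
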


\begin{proof}
	Each~\( F^{n} M \) is graded projective,
	so the inclusions~\( F^{n-1} M \into F^{n} M \)
	split as maps of graded modules.
	This means that \( M \cong \mathup{gr}^{\smallbullet} M \)~as graded modules, so as a graded module, \( M \)~is the direct sum of projectives and is hence graded projecitve.
	To prove that it is also h-projective,
	we let~\( X \) be an exact dg-modules over~\( A \).
	The short exact sequence~\(
		0
		\to
		F^{n-1} M
		\to
		F^{n} M
		\to
		\mathup{gr}^{n} M
		\to
		0
	\)
	splits as a short exact sequence of graded modules. Applying the graded hom, we obtain that the sequence
	\[
		0
		\longto
		\operatorname{Hom}_{A}^{\smallbullet} (\mathup{gr}^{n} M,X )
		\longto
		\operatorname{Hom}_{A}^{\smallbullet} (F^{n} M,X )
		\longto
		\operatorname{Hom}_{A}^{\smallbullet} (F^{n-1} M,X )
		\longto
		0
	\]
	is also exact. Now the first term is exact by assumption, while the last term is exact by induction. Therefore,
	\( \smash{\operatorname{Hom}_{A}^{\smallbullet} (F^{n} M,X )} \)~is exact.
	The sequence also shows that the map~\( \smash{
		\operatorname{Hom}_{A}^{\smallbullet} (F^{n} M,X )
		\to
		\operatorname{Hom}_{A}^{\smallbullet} (F^{n-1} M,X )
	} \)
	is surjective, so the
	inverse system~\( \smash{\operatorname{Hom}_{A}^{\smallbullet} (F^{n} M,X )} \)
	satisfies Mittag--Leffler.
	Therefore,
	\(
		\operatorname{Hom}_{A}^{\smallbullet} (M ,X )
		=
		\invlimformat{lim}{\operatorname{Hom}_{A}^{\smallbullet} (F^{n} M,X )}
	\)
	is exact, so \( M \)~is h-projective.
	(Alternatively, if one wants to avoid Mittag--Leffler,
	one may write~\( M \) as the the last term of a short exact sequence~\(
		0
		\longto
		\bigoplus F^{n} M
		\xto{\mathup{id}-s}
		\bigoplus F^{n} M
		\longto
		M
		\longto
		0
	\)
	where \( s \)~is the sum of the embeddings~\( F^{n} M \into F^{n+1} M \).
	By graded projectivity, this sequence splits as a short exact sequence of graded modules, so as before, the sequence~\(
		0
		\to
		\operatorname{Hom}_{A}^{\smallbullet} (M ,X )
		\to
		\operatorname{Hom}_{A}^{\smallbullet} (\bigoplus F^{n} M,X )
		\to
		\operatorname{Hom}_{A}^{\smallbullet} (\bigoplus F^{n} M,X )
		\to
		0
	\)~is exact.
	As the last two terms are exact complexes, so is the first.)
\end{proof}

\section{The dg-category of \texorpdfstring{\ainfty}{A∞}-modules}

We continue to work over a base which is a dg-algebra~\( R \)
over a field~\( k \).
Let
\[
	\operatorname{Bar} (A)
	=
	\bigoplus_{n \ge 0}{A[1]^{\otimes n}}
	=
	\bigoplus_{n \ge 0} A \mathbin{\otimes_{R}} A \mathbin{\otimes_{R}} \cdots \mathbin{\otimes_{R}} A [n]
\]
be the \textdef{bar construction} of~\( A \),
the coalgebra with cofree comultiplication
\[
	\Delta (a_{1} \mathbin{\otimes} a_{2} \mathbin{\otimes} \cdots \mathbin{\otimes} a_{n} )
	=
	\sum_{i=1}^{n-1}
		(a_{1} \mathbin{\otimes} \cdots \mathbin{\otimes} a_{i} )
		\mathbin{\otimes}
		(a_{i +1} \mathbin{\otimes} \cdots \mathbin{\otimes} a_{n} )
	.
\]
There is another natural candidate for a derived category of~\( \categoryformat{$A $-dgmod} \), obtained using the notion of an \textdef{\ainfty-module}.
An \ainfty-module over a dg-algebra~\( A \)
is a dg-comodule over~\smash{\( \operatorname{Bar} (A ) \)}
whose underlying graded comodule is cofree.
In other words, it has the form~\(
	\operatorname{Bar}_{A} (M ) = \operatorname{Bar} (A ) \mathbin{\otimes_{R}} M
\)
for some graded \( R \)-module~\( M \).
By cofreeness, the differential~\(
	d
	\colon
	\operatorname{Bar}_{A} (M )
	\to
	\operatorname{Bar}_{A} (M )[{1}]
\)
is given by a collection of maps~\(
	\mathup{ac}_{n}
	\colon
	A^{\otimes (n-1)}
	\mathbin{\otimes_{R}}
	M
	\to
	M[{2-n}]
\)
for~\( n \ge 1 \).
each of which we shall refer to as the \( n \)th~action map.
A map~\( f \colon M \to N \) of \ainfty-modules is a map of the corresponding dg-comodules,
and it boils down
to a collection of maps~\( f_{n} \colon A^{\otimes n-1} \mathbin{\otimes_{R}} M \to N[{1-n}] \) for all~\( n\ge 1 \)
satisfying a technical condition corresponding to~\( f \)
commuting with~\( d \).
We write~\( \categoryformat{$A $-mod}_{\infty}^{\categoryformat{nu}} \)
for the dg-category of (non-uniltal) \ainfty-modules over~\( A \).
This is a full dg-subcategory
of the dg-category of dg-comodules over the bar construction.
We may write~\( \categoryformat{$A $-mod}_{\infty}^{\categoryformat{nu}} (\categoryformat{$R $-dgmod} ) \) if we want
to stress that the \ainfty-modules are \( R \)-linear.

The cohomology~\( H^{\smallbullet} (M ) \)
with respect to the differential~\( d = \mathup{ac}_{1} \)
is a graded module over the graded ring~\( H^{\smallbullet} (A ) \).
The \ainfty-module~\( M \)~is called \textdef{homotopy-unital}
if \( H^{\smallbullet} (M ) \)~is unital over~\( H^{\smallbullet} (A ) \).
The full subcategory consisting of~\( \categoryformat{$A $-mod}_{\infty}^{\categoryformat{nu}} \)
of homotopy-unital \ainfty-modules
is denoted~\( \categoryformat{$A $-mod}_{\infty}^{\categoryformat{hu}} \).

An \ainfty-module \( M \)~is called \textdef{strictly unital}
if we have \smash{\( \mathup{ac}\circ(\eta \mathbin{\otimes}\mathup{id}_{M}) = \mathup{id}_{M} \)}
and \smash{\(
	\mathup{ac}_{n}
	\circ
	(
		\mathup{id}_{A}^{\otimes i}
		\mathbin{\otimes}
		\eta 
		\mathbin{\otimes}
		\mathup{id}_{A}^{\otimes j}
		\mathbin{\otimes}
		\mathup{id}_{M}
	)
	= 0
\)}
for all~\( n\neq2 \)
and all~\( i , j \)
with~\( i + j + 2 = n \)
(here, \( \eta \colon R\to A \)
is the unit map).
If \( A \)~is augmented, this
is the same as a dg-comodule over the augmented bar construction~\( \operatorname{Bar}_{R}^{+} (A ) = \smash{\operatorname{Bar}_{R} (\overline{A} )} \)
whose underlying graded comodule is cofree.
We write the corresponding dg-comodule
as~\( \operatorname{Bar}_{R}^{+} (M ) \).
A strictly unital \ainfty-module is in particular homotopy-unital.
A dg-module~\( M \) is a strictly unital \ainfty-module
with~\( \mathup{ac}_{2} = \mathup{ac} \)
and~\( \mathup{ac}_{1} = d_{M} \).
A map \( f \colon M \to N \)~of
strictly unital \ainfty-modules
is a map of \ainfty-modules
such that \( f_{1} \)~commutes
with the unit, and such that
\( \smash{
	f_{n}
	\circ
	(\mathup{id}^{\otimes i}\mathbin{\otimes}\eta \mathbin{\otimes}\mathup{id}^{\otimes j}\mathbin{\otimes}\mathup{id}_{M})
	=
	0
} \)
for~\( n>1 \) and~\( i + j + 2 = n \).
The non-full dg-subcategory of~\( \categoryformat{$A $-mod}_{\infty}^{\categoryformat{nu}} \) consisting of strictly unital \ainfty-modules
is denoted~\( \categoryformat{$A $-mod}_{\infty} \).

An \textdef{\ainfty-quasi-isomorphism}
is a morphism~\( f \colon M \to N \)
such that \( f_{1} \colon M \to N \)~is
a quasi-isomorphism of \( R \)-dg-modules,
where \( M \) and~\( N \)
are equipped with the differential~\( d = \mathup{ac}_{1} \).
Denote by
\[
	\categoryformat{$A $-mod}_{\infty} (\categoryformat{QF} (R ) )
	\subset
	\categoryformat{$A $-mod}_{\infty} (\categoryformat{$R $-dgmod} )
\]
the full subcategory of \ainfty-modules~\( M \)
such that \( (M ,\mathup{ac}_{1} ) \)~is quasi-free
as an \( R \)-dg-module (we use analogous notation for non-unital and homotopy-unital \ainfty-modules).

We shall need adjunction statements for modules and comodules.
If \( R \)~is a dg-algebra over~\( k \), \( A \) a dg-algebra over~\( R \),
and~\( C \) a dg-coalgebra over~\( R \), then
a \textdef{twisting cochain} is a 
closed map of \( R \)-dg-bimodules~\( \tau  \colon C \to A \)
of degree~\( 1 \) such that
\[
	d_{A} \tau 
	+
	\tau  d_{C}
	+
	m(\tau \mathbin{\otimes}\tau )\Delta 
	=
	0.
\]
If~\( M \in \categoryformat{$C $-dgcomod} \), we denote
by~\( A \mathbin{\otimes_{R}^{\tau }} M \in \categoryformat{$A $-dgmod} \)
the dg-module whose underlying graded module is~\( A \mathbin{\otimes_{R}} M \), but whose differential is given by
\[
	d_{A \mathbin{\otimes_{R}^{\tau }} M}
	=
	d_{A \mathbin{\otimes_{R}} M}
	+
	d_{A \mathbin{\otimes_{R}} M}^{\tau }
\]
where \( d_{A \mathbin{\otimes_{R}} M}^{\tau } \)~is the composition
\[
	d_{A \mathbin{\otimes_{R}} M}^{\tau }
	\colon
	A \mathbin{\otimes_{R}} M
	\xrightarrow{\mathup{id}_{A}\mathbin{\otimes}\mathup{ca}}
	A\mathbin{\otimes}C\mathbin{\otimes}M
	\xrightarrow{\mathup{id}_{A}\mathbin{\otimes}\tau \mathbin{\otimes}\mathup{id}_{M}}
	A\mathbin{\otimes}A\mathbin{\otimes}M
	\xrightarrow{m_{A}\mathbin{\otimes}\mathup{id}_{M}}
	A\mathbin{\otimes}M
	.
\]
Similarly, if \( N \in \categoryformat{$A $-dgmod} \), then
we denote by~\( C \mathbin{\otimes_{R}^{\tau }} N \in \categoryformat{$C $-dgcomod} \)
the comodule whose underlying graded comodule is~\( C \mathbin{\otimes_{R}} N \), but whose differential is given by
\[
	d_{C \mathbin{\otimes_{R}^{\tau }} N}
	=
	d_{C \mathbin{\otimes_{R}} N}
	+
	d_{C \mathbin{\otimes_{R}} N}^{\tau }
\]
where
\[
	d_{C \mathbin{\otimes_{R}} M}^{\tau }
	\colon
	C \mathbin{\otimes_{R}} N
	\xrightarrow{ \Delta  \mathbin{\otimes_{R}} \mathup{id}_{N} }
	C \mathbin{\otimes_{R}} C \mathbin{\otimes_{R}} N
	\xrightarrow{ \mathup{id}_{C} \mathbin{\otimes_{R}} \tau  \mathbin{\otimes_{R}} \mathup{id}_{N} }
	C \mathbin{\otimes_{R}} A \mathbin{\otimes_{R}} N
	\xrightarrow{ \mathup{id}_{C} \mathbin{\otimes_{R}} \mathup{ac} }
	C \mathbin{\otimes_{R}} N
	.
\]
Finally, if \( M \in \categoryformat{$C $-dgcomod} \)
and~\( N \in \categoryformat{$A $-dgmod} \),
then we denote by~\( \operatorname{Hom}_{R}^{\tau } (M ,N ) \)
the \( k \)-complex whose underlying
graded \( k \)-module is~\( \operatorname{Hom}_{R}^{\smallbullet} (M ,N ) \)
and whose differential is
\[
	d_{\operatorname{Hom}_{R}^{\tau } (M ,N )}
	=
	d_{\operatorname{Hom}_{R}^{\smallbullet} (M ,N )}
	+
	d_{\operatorname{Hom}_{R}^{\smallbullet} (M ,N )}^{\tau }
\]
where \( d_{\operatorname{Hom}_{R}^{\smallbullet} (M ,N )}^{\tau } \)
takes~\( g \in \operatorname{Hom}_{R}^{\smallbullet} (M ,N ) \)
to
\[
	d_{\operatorname{Hom}_{R}^{\smallbullet} (M ,N )}^{\tau } (g )
	\colon
	M
	\xrightarrow{\mathup{ca}}
	C \mathbin{\otimes_{R}} M
	\xrightarrow{\tau \mathbin{\otimes}g}
	A \mathbin{\otimes_{R}} N
	\xrightarrow{\mathup{ac}}
	N
	.
\]
One easily checks using the condition on~\( \tau  \) that the obvious maps are
in fact isomorphisms of complexes
\[
	\operatorname{Hom}_{A}^{\smallbullet} (A \mathbin{\otimes_{R}^{\tau }} M ,N )
	\cong
	\operatorname{Hom}_{R}^{\tau } (M ,N )
	\cong
	\operatorname{Hom}_{C}^{\smallbullet} (M ,C \mathbin{\otimes_{R}^{\tau }} N ).
\]
In particular,
\[
	A \mathbin{\otimes_{R}^{\tau }} {\slot}
	\colon
	\categoryformat{$C $-dgcomod} \longto \categoryformat{$A $-dgmod}
\]
is
left adjoint to
\[
	C \mathbin{\otimes_{R}^{\tau }} {\slot}
	\colon
	\categoryformat{$A $-dgmod} \longto \categoryformat{$C $-dgcomod}
	.
\]
One may show that for~\( C = \operatorname{Bar} (A ) \),
the natural map~\( \tau  \colon C \to A \) that kills everything
except the~\smash{\( A^{\otimes 1}[1] \)}-component is a twisting cochain.
If \( A \)~is augmented, we also get a twisting cochain on~\( C = \operatorname{Bar}^{+} (A ) \) by map~\( \tau  \colon C \to A \) that takes~\smash{\( \overline{A}^{\otimes 1}[1] \)} into~\( A \).
Analogous results hold for~\( A \) being the augmented or non-augmented cobar construction of a coalgebra~\( C \).

The following proposition is well-known and classical over a field (and probably also over a ring, to the right people, but we found no reference):

\begin{proposition}\label{res:a_infty_qiso=homotopy_equivalence}
	If the dg-algebra~\( A \)
	is quasi-free as an \( R \)-dg-module,
	and if \( M,N \in \categoryformat{$A $-mod}_{\infty} (\categoryformat{QF} (R ) ) \),
	then an \ainfty-quasi-isomorphism~\( M \to N \)
	is a homotopy equivalence.
\end{proposition}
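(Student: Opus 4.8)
The plan is to deduce the statement via the Yoneda lemma on the homotopy category $H^{0}(\categoryformat{$A $-mod}_{\infty})$, reducing everything to a property of the single component $f_{1}$. Writing $\operatorname{Hom}_{\infty}^{\smallbullet}(P ,Q )$ for the morphism complex between two strictly unital \ainfty-modules, a morphism becomes invertible in $H^{0}$ precisely when it is a homotopy equivalence, so it suffices to show that post-composition
\[
	f_{*}\colon\operatorname{Hom}_{\infty}^{\smallbullet}(P ,M )\longto\operatorname{Hom}_{\infty}^{\smallbullet}(P ,N )
\]
is a quasi-isomorphism of $k$-complexes for every $P\in\categoryformat{$A $-mod}_{\infty}(\categoryformat{QF}(R ))$; specialising to $P = M$ and $P = N$ then produces a two-sided homotopy inverse of $f$ by the usual diagram chase.

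First I would make the morphism complex explicit. Writing $\underline{P},\underline{M}$ for the underlying graded $R$-modules and $\overline{A}=\operatorname{coker}(\eta\colon R\to A )$, cofreeness of the underlying graded comodule together with strict unitality identifies
\[
	\operatorname{Hom}_{\infty}^{\smallbullet}(P ,M )\cong\prod_{n\ge 0}\operatorname{Hom}_{R}^{\smallbullet}(\overline{A}^{\otimes n}\otimes_{R}\underline{P} ,\underline{M} ),
\]
the differential being the commutator with the two perturbed (\ainfty) differentials. I would then filter by the arity $n$. A direct inspection of the three families of terms in this differential — those built from $\mathup{ac}^{M}$, from the multiplication of $A$, and from $\mathup{ac}^{P}$ — shows that every contribution other than the one assembled from $d_{A}$, $\mathup{ac}_{1}^{P}$ and $\mathup{ac}_{1}^{M}$ strictly raises the arity. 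Hence the arity filtration is respected and the associated graded differential is purely internal, giving
\[
	\mathup{gr}^{n}\cong\operatorname{Hom}_{R}^{\smallbullet}(\overline{A}^{\otimes n}\otimes_{R}\underline{P} ,\underline{M} )
\]
as a complex built from honest $R$-dg-modules, on which $\mathup{gr}^{n}(f_{*})$ is simply post-composition with $f_{1}$.

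The heart of the matter is then a soft observation. Since $M$ and $N$ lie in $\categoryformat{QF}(R )$, they are h-projective over $R$, and a quasi-isomorphism between h-projective $R$-dg-modules is a homotopy equivalence (its cone is acyclic and h-projective, hence contractible). Thus $f_{1}$ is an $R$-linear homotopy equivalence, and homotopy equivalences are preserved by every additive functor; in particular $\operatorname{Hom}_{R}^{\smallbullet}(\overline{A}^{\otimes n}\otimes_{R}\underline{P} ,f_{1} )$ is a homotopy equivalence. Therefore $\mathup{gr}^{n}(f_{*})$ is a quasi-isomorphism for all $n$ — and, pleasantly, this step needs no hypothesis whatsoever on $P$.

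It remains to promote this from the associated graded to the total complex. The arity filtration is complete and Hausdorff, since the morphism complex is the product over $n$ with quotients the finite truncations $\prod_{n<p}$, whose transition maps are split surjections. An induction on $p$ with the five lemma shows $f_{*}$ is a quasi-isomorphism on each truncation; since the truncation towers for source and target have surjective transition maps and $f_{*}$ is a levelwise quasi-isomorphism between them, the induced map on inverse limits — which recovers $f_{*}$ itself — is again a quasi-isomorphism (comparing Milnor sequences makes both the $\lim$ and $\lim^{1}$ terms match). This supplies the Yoneda input and hence the proposition. The step I expect to be most delicate is precisely this passage to the limit: one must verify completeness of the arity filtration and the vanishing of the relevant $\lim^{1}$-obstruction, since the morphism complex is an infinite \emph{product} over arities rather than a direct sum; the accompanying bookkeeping — checking that all non-internal pieces of the differential strictly shift the arity, so that the associated graded is as claimed — is the other point demanding care.
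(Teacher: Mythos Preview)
Your argument is correct and rests on the same mechanism as the paper's: filter by tensor length, observe that on associated graded the map reduces to the homotopy equivalence~\(f_{1}\), and pass to the inverse limit via a Mittag--Leffler/Milnor-sequence argument. The packaging, however, differs in two noteworthy ways. First, the paper works contravariantly: it takes the cone~\(C(f)\) of the comodule map \(C\otimes_{R}^{\tau}M\to C\otimes_{R}^{\tau}N\), filters \(C(f)\) via the bar filtration, and shows \(\operatorname{Hom}_{C}^{\smallbullet}(C(f),C\otimes_{R}^{\tau}M)\) is acyclic, whence precomposition~\(f^{*}\) is a quasi-isomorphism and a one-sided inverse drops out. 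You instead run covariant Yoneda, filtering the hom complex itself by arity and showing post-composition~\(f_{*}\) is a quasi-isomorphism for every test object. Second---and this is the more interesting point---because your short exact sequences \(0\to F^{p+1}\to F^{p}\to V_{p}\to 0\) are direct-product projections and hence split automatically at the graded level, you never invoke graded projectivity of the filtered pieces; the paper, filtering the cone rather than the hom, needs \(A\) graded projective over~\(R\) to split its sequences before applying \(\operatorname{Hom}\). So your route is slightly slicker and, as you observe, appears not to use the quasi-freeness of~\(A\) at all.
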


This shows that the category~\(
	\categoryformat{$A $-mod}_{\infty} (\categoryformat{QF} (R ) )
\)
is already derived.

\begin{proof}
	Let~\(
		f
		\colon
		M
		\to
		N
	\)
	be an \ainfty-quasi-isomorphism. Then
	\( f_{1} \colon M \to N \)~is a quasi-isomorphism
	of h-projective \( R \)-modules
	and hence a homotopy equivalence.
	Consider the map of \( C \)-comodules~\( \smash{
		f
		\colon
		C \mathbin{\otimes_{R}^{\tau }} M
		\to
		C \mathbin{\otimes_{R}^{\tau }} N
	} \).
	We filter the coalgebra~\( C = \operatorname{Bar}_{R} (A ) \)
	as an \( R \)-dg-module
	by letting~\(
		F^{n} C
		=
		\bigoplus_{i\le n} A^{\otimes i} [i]
	\).
	This allows us to also
	filter \( C \mathbin{\otimes_{R}^{\tau }} M \)
	and~\( C \mathbin{\otimes_{R}^{\tau }} N \)
	as \( R \)-dg-modules by
	letting \(\smash{
		F^{n} (C \mathbin{\otimes_{R}^{\tau }} M )
		=
		(F^{n} C) \mathbin{\otimes_{R}^{\tau }} M
	}\)
	and~\(
		F^{n} (C \mathbin{\otimes_{R}^{\tau }} N )
		=
		(F^{n} C) \mathbin{\otimes_{R}^{\tau }} N
	\).
	In both cases, taking associated graded kills the
	differential~\( d_{\tau } \).
	Since \( A \)~is h-projective,
	\(
		\mathup{gr}^{n} \varphi 
		\colon
		A^{\otimes n} [n] \mathbin{\otimes_{R}} M
		\to
		A^{\otimes n} [n] \mathbin{\otimes_{R}} N
	\)
	is a homotopy equivalence of \( R \)-dg-modules.
	Thus taking the cone
	\[\begin{tikzcd}
		C \mathbin{\otimes_{R}^{\tau }} M
		\ar[r,"f"]
			& C \mathbin{\otimes_{R}^{\tau }} N
			\ar[r]
				& C(f )
				\invcomma
	\end{tikzcd}\]
	we obtain a filtration on~\( C(f ) \)
	such that each graded piece is contractible over~\( R \).
	The exact sequence of dg-comodules
	\[
		C \mathbin{\otimes_{R}^{\tau }} N
		\longto
		C(f)
		\longto
		C \mathbin{\otimes_{R}^{\tau }} M[{1}]
	\]
	splits as a short exact sequence of \emph{graded}~\( C \)-comodules.
	Therefore, taking the graded hom, we get a short exact
	sequence of complexes of vector spaces
	\[
		\operatorname{Hom}_{C}^{\smallbullet} (C \mathbin{\otimes_{R}^{\tau }} M[{1}],C \mathbin{\otimes_{R}^{\tau }} M )
		\to
		\operatorname{Hom}_{C}^{\smallbullet} (C(f ),C \mathbin{\otimes_{R}^{\tau }} M )
		\to
		\operatorname{Hom}_{C}^{\smallbullet} (C \mathbin{\otimes_{R}^{\tau }} N ,C \mathbin{\otimes_{R}^{\tau }} M )
		.
	\]
	This therefore leads to a long exact sequence of the cohomologies.
	If we can prove that the middle term~\(
		\operatorname{Hom}_{C}^{\smallbullet} (C(f ),C \mathbin{\otimes_{R}^{\tau }} M )
	\)
	is exact, we will get that precomposition
	\[
		f^{*}
		\colon
		\operatorname{Hom}_{C}^{\smallbullet} (C \mathbin{\otimes_{R}^{\tau }} N ,C \mathbin{\otimes_{R}^{\tau }} M )
		\longto
		\operatorname{Hom}_{C}^{\smallbullet} (C \mathbin{\otimes_{R}^{\tau }} M ,C \mathbin{\otimes_{R}^{\tau }} M )		
	\]
	is a quasi-isormophism.
	The class~\smash{\(
		\mathopen[\mathup{id} \mathclose]
		\in
		H^0 \operatorname{Hom}_{C}^{\smallbullet} (C \mathbin{\otimes_{R}^{\tau }} M ,C \mathbin{\otimes_{R}^{\tau }} M )
	\)}
	will then determine an element
	in~\smash{\(
		H^0 \operatorname{Hom}_{C}^{\smallbullet} (C \mathbin{\otimes_{R}^{\tau }} N ,C \mathbin{\otimes_{R}^{\tau }} M )
	\)}
	which will be a homotopy inverse to~\( f \).
	
	In proving the claim that
	\(
		\operatorname{Hom}_{C}^{\smallbullet} (C(f ),C \mathbin{\otimes_{R}^{\tau }} M )
		=
		\operatorname{Hom}_{R}^{\tau } (C(f ),M )
	\)~is exact,
	we use the previously mentioned
	exhaustive \( C \)-dg-comodule filtration~\( F^{n} C(f ) \)
	for which the graded
	pieces~\( \mathup{gr}^{n} C(f ) \)
	are contractible over~\( R \),~\( F^{-1} C(f ) = 0 \),
	and where each filtered and graded piece is projective as a graded \( R \)-module. Therefore, the exact sequence of \( C \)-comodules
	\[
		0
		\to
		F^{n-1} C(f )
		\to
		F^{n} C(f )
		\to
		\mathup{gr}^{n} C(f )
		\to
		0
	\]
	splits as an exact sequence of graded \( R \)-modules.
	Since homs commute with direct sums, this yields an exact
	sequence
	\[
		0
		\to
		\operatorname{Hom}_{R}^{\tau } (\mathup{gr}^{n} C(f ),M )
		\to
		\operatorname{Hom}_{R}^{\tau } (F^{n} C(f ),M )
		\to
		\operatorname{Hom}_{R}^{\tau } (F^{n-1} C(f ),M )
		\to
		0.
	\]
	Taking the long exact sequence of cohomologies, we get
	by induction that
	\( \operatorname{Hom}_{R}^{\tau } (F^{n} C(f ),M ) \)~is exact for all~\( n \).
	Now we obtain the~\( C(f ) \)
	as the right term in the exact sequence
	\[
		0
		\longto
		\bigoplus_{n\ge 0} F^{n} C(f )
		\xrightarrow{\mathup{id}-s}
		\bigoplus_{n\ge 0} F^{n} C(f )
		\longto
		C(f )
		\longto
		0
	\]
	where \( s \)~is the sum of the inclusions~\(
		F^{n} C(f )
		\into
		F^{n+1} C(f )
	\).
	As before, this sequence consists of complexes
	which are projective as graded modules,
	so the sequence splits as a short exact sequence of graded modules.
	As above, we get that~\( \operatorname{Hom}_{R}^{\tau } (C(f ),M ) \)~is exact, as claimed.
	
	We provide an alternative argument using Mittag--Leffler:
	As before, we argue that the
	complex~\(
		\operatorname{Hom}_{R}^{\tau } (F^{n} C(f ),M )
	\)
	is exact for all~\( n \).
	Now since the maps~\(
		\operatorname{Hom}_{R}^{\tau } (F^{n} C(f ),M )
		\to
		\operatorname{Hom}_{R}^{\tau } (F^{n-1} C(f ),M )
	\)
	are surjective, the inverse system~\(
		\operatorname{Hom}_{R}^{\tau } (F^{n} C(f ),M )
	\)
	satisfies Mittag--Leffler. Therefore,
	\[
		\operatorname{Hom}_{R}^{\tau } (C(f ),M )
		=
		\operatorname{Hom}_{R}^{\tau } (\dirlimformat{lim} F^{n} C(f ),M )
		=
		\invlimformat{lim} \operatorname{Hom}_{R}^{\tau } (F^{n} C(f ),M )
	\]
	is exact (both limits being unenriched limits evaluated in the category of graded modules).
\end{proof}

\begin{lemma}\label{lemma:CobBar_resolution}
	If \( A \)~is a dg-algebra over~\( R \)
	and~\( M \in \categoryformat{$A $-dgmod} \),
	then the counit of adjunction
	\[
		A
		\mathbin{\otimes_{R}^{\tau }}
		\operatorname{Bar} (A )
		\mathbin{\otimes_{R}^{\tau }}
		M
		\to
		M
	\]
	is a quasi-isomorphism of \( A \)-dg-modules.
\end{lemma}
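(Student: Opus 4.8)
The plan is to unwind the two twisting differentials, recognize \( A \mathbin{\otimes_{R}^{\tau }} \operatorname{Bar}(A) \mathbin{\otimes_{R}^{\tau }} M \) as the classical two-sided bar resolution of~\( M \), and then contract its augmentation by the extra degeneracy that inserts a unit. First I would observe that, since \( \mathbin{\otimes_{R}^{\tau }} \) leaves the underlying graded module unchanged, the object in question has underlying graded module \( \bigoplus_{n\ge 0} A \mathbin{\otimes_{R}} A^{\otimes n} \mathbin{\otimes_{R}} M \), where the middle \( A^{\otimes n} \) is the bar-degree-\( n \) part of \( C = \operatorname{Bar}(A) \). Writing out \( d^{\tau}_{C \mathbin{\otimes_{R}} M} \) and \( d^{\tau}_{A \mathbin{\otimes_{R}} M} \) for the twisting cochain \( \tau \) (projection of~\( C \) onto its \( A^{\otimes 1} \)-summand), and using that the reduced coproduct \( \Delta \) splits off a single tensor factor at each end only, one sees that \( \tau \) annihilates every summand except the extreme ones: the right twisting term reduces to the \emph{last face} acting \( a_n \) on~\( m \), and the left twisting term to the \emph{first face} multiplying the leftmost \( A \) into~\( a_1 \). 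Together with the internal differentials \( d_A, d_M \) and the bar differential \( d_C \) (internal differentials and the adjacent multiplications \( a_i a_{i+1} \), i.e.\ the middle faces), the total differential is exactly the alternating sum of the faces of the two-sided bar complex. Under this identification the counit \( \varepsilon \) is the augmentation: the projection onto the lowest bar-degree summand \( A \mathbin{\otimes_{R}} R \mathbin{\otimes_{R}} M = A \mathbin{\otimes_{R}} M \) followed by the action \( \mathup{ac} \colon A \mathbin{\otimes_{R}} M \to M \), and it is visibly a morphism of \( A \)-dg-modules.

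Next I would produce a contracting homotopy for the augmented complex \( \cdots \to A \mathbin{\otimes_{R}} A \mathbin{\otimes_{R}} M \to A \mathbin{\otimes_{R}} M \xrightarrow{\varepsilon} M \). Let \( s \) insert the unit \( 1_A = \eta(1) \) into a new leftmost slot, raising bar degree by one, together with the bottom section \( m \mapsto 1_A \mathbin{\otimes} m \) splitting \( \varepsilon \). The standard extra-degeneracy relations (the new leftmost face composed with \( s \) is the identity, and the remaining faces commute with \( s \) after reindexing) then yield \( d s + s d = \mathup{id} \) on the augmented complex, exhibiting \( \varepsilon \) as a homotopy equivalence and hence a quasi-isomorphism. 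Because \( s \) strictly raises bar degree and the construction is an honest direct sum, no convergence or Mittag--Leffler argument is needed here, in contrast to the proof of \cref{res:a_infty_qiso=homotopy_equivalence}. Finally, quasi-isomorphy of dg-modules is detected on underlying complexes, so \( \varepsilon \) is a quasi-isomorphism of \( A \)-dg-modules, as claimed.

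The step to be careful about is the interaction of \( s \) with the tensor products over~\( R \) and with the internal differential, precisely because \( R \to A \) is not assumed central. For well-definedness and \( R \)-linearity one uses the balancing relation \( 1_A \mathbin{\otimes_{R}} \eta(r)a = \eta(r) \mathbin{\otimes_{R}} a \) in \( A \mathbin{\otimes_{R}} A \), which shows both that inserting \( 1_A \) descends to the balanced tensor product and that it commutes with the left \( R \)-action. Compatibility with the internal differential is automatic since \( 1_A \) is closed of degree zero, so \( s \) (anti)commutes with \( d_A \) and \( d_M \), and the purely simplicial contraction upgrades verbatim to the differential-graded complex once the Koszul signs are fixed. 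This sign bookkeeping, and checking the extra-degeneracy identities in the \emph{twisted} (rather than merely simplicial) complex, is the only genuinely technical point; everything else is formal, and in particular the argument requires no hypothesis on~\( A \) beyond being a unital dg-algebra over~\( R \).
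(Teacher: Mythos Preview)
Your argument is correct and is precisely the approach the paper takes: the paper's entire proof is the single sentence ``This is exactly the bar resolution of~\( M \).'' You have merely unpacked that sentence, identifying the twisted tensor product with the two-sided bar complex and supplying the standard extra-degeneracy contraction that the paper leaves implicit.
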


\begin{proof}
	This is exactly the bar resolution of~\( M \).
\end{proof}

\begin{lemma}\label{res:ainfty_bar_resolution}
	If \( A \)~is an augmented, flat dg-algebra over dg-algebra~\( R \) over~\( k \),
	and \( M \)~is a strictly unital \ainfty-module over~\( A \) which is flat over~\( R \),
	then the unit of adjunction~\(
		M
		\to
		A \mathbin{\otimes_{R}^{\tau }} \operatorname{Bar}^{+} (M )
	\) is a quasi-isormophism of \ainfty-modules.
\end{lemma}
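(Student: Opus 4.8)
The plan is to make the unit completely explicit on underlying complexes and then to contract its target onto~\( M \) by the usual extra degeneracy. Write \( C = \operatorname{Bar}^{+}(A) \) and let \( P = \operatorname{Bar}^{+}(M) = C \mathbin{\otimes_{R}} M \) be the cofree \( C \)-comodule underlying the strictly unital \ainfty-module~\( M \). Unwinding the adjunction isomorphisms, the unit \( \eta_{P} \colon P \to C \mathbin{\otimes_{R}^{\tau}} (A \mathbin{\otimes_{R}^{\tau}} P) \) is exactly the \ainfty-module map \( M \to A \mathbin{\otimes_{R}^{\tau}} \operatorname{Bar}^{+}(M) \) of the statement, and its linear part is simply the inclusion
\[
	\iota \colon M \longto A \mathbin{\otimes_{R}^{\tau}} \operatorname{Bar}^{+}(M), \qquad m \longmapsto 1 \mathbin{\otimes} m,
\]
into the lowest bar degree \( A \mathbin{\otimes_{R}} M \). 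Since an \ainfty-quasi-isomorphism is by definition a map whose first component is a quasi-isomorphism for the differentials \( \mathup{ac}_{1} \)---and on the honest dg-module \( A \mathbin{\otimes_{R}^{\tau}} \operatorname{Bar}^{+}(M) \) this is just its internal differential---the lemma reduces to showing that \( \iota \) is a quasi-isomorphism of \( R \)-dg-modules. This is the assertion that the reduced \ainfty\ bar resolution is a resolution, the \ainfty-analogue of \cref{lemma:CobBar_resolution}.

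First I would set \( N = A \mathbin{\otimes_{R}^{\tau}} \operatorname{Bar}^{+}(M) \), graded by bar length as \( N = \bigoplus_{n \ge 0} A \mathbin{\otimes_{R}} \overline{A}^{\otimes n} \mathbin{\otimes_{R}} M \), and introduce the extra degeneracy \( h \colon N \to N \) of degree~\( -1 \). Using the \( R \)-linear splitting \( A = R \cdot 1 \oplus \overline{A} \) afforded by the augmentation~\( \epsilon \), it sends
\[
	a \mathbin{\otimes} a_{1} \mathbin{\otimes} \cdots \mathbin{\otimes} a_{n} \mathbin{\otimes} m
	\longmapsto
	1 \mathbin{\otimes} \overline{a} \mathbin{\otimes} a_{1} \mathbin{\otimes} \cdots \mathbin{\otimes} a_{n} \mathbin{\otimes} m,
\]
where \( \overline{a} = a - \eta \epsilon (a) \in \overline{A} \); it is \( R \)-linear and vanishes on words already beginning with~\( 1 \).

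Then I would verify the homotopy identity \( d h + h d = \mathup{id} - \iota \rho \) for a suitable retraction \( \rho \colon N \to M \). The content is a bookkeeping of the pieces of the differential of~\( N \): the internal differential, the bar (algebra) multiplications, the action maps \( \mathup{ac}_{k} \), and the twisting differential \( d^{\tau} \). The internal differential anticommutes with~\( h \) (as \( d_{A}(1) = 0 \) and \( \epsilon \) commutes with differentials), so those terms cancel. The bar multiplications among the original letters and all the action maps act to the right of the inserted letter and hence commute past the insertion of \( \overline{a} \); every such term therefore cancels between \( dh \) and \( hd \), \emph{except} the single top action map that consumes \( \overline{a} \) together with all of \( a_{1}, \dots, a_{n} \) and~\( m \). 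The remaining, non-cancelling contributions come from \( d^{\tau} \) (multiplying the outer factor into the first letter) and the one bar multiplication at the newly created junction; using \( a = \eta \epsilon (a) + \overline{a} \) and \( \overline{A} \cdot \overline{A} \subseteq \overline{A} \) these assemble to \( \mathup{id} \) away from bar degree zero, the whole discrepancy being a map \( \iota \rho \) landing in \( A \mathbin{\otimes_{R}} M \). Here \( \rho \) is the augmentation in bar degree zero, corrected by those surviving top action maps, and \( \rho \iota = \mathup{id}_{M} \) is immediate. It is precisely strict unitality that legitimises all of this: it is the hypothesis allowing one to replace \( A \) by \( \overline{A} \) in the bar construction, so that \( h \) inserts an element of \( \overline{A} \) and the operations \( \mathup{ac}_{k} \) only ever receive arguments in~\( \overline{A} \).

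Finally, \( d h + h d = \mathup{id} - \iota \rho \) together with \( \rho \iota = \mathup{id}_{M} \) exhibits \( \iota \) and \( \rho \) as mutually inverse homotopy equivalences of \( R \)-dg-modules; in particular \( \iota \) is a quasi-isomorphism, proving the lemma. The flatness hypotheses are not needed for this contraction---the homotopy \( h \) is visibly \( R \)-linear---but they ensure that each \( A \mathbin{\otimes_{R}} \overline{A}^{\otimes n} \mathbin{\otimes_{R}} M \) is \( R \)-flat, so that \( N \) lands in the quasi-free world and the resolution may subsequently be combined with \cref{res:a_infty_qiso=homotopy_equivalence}. The hard part will be the sign-careful verification of the homotopy identity: one has to check that the higher \ainfty-operations, which have no counterpart in the classical strictly associative bar resolution, really do commute past~\( h \) and cancel, and that strict unitality removes every term in which the inserted unit would be fed into an action map.
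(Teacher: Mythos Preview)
Your argument is correct. Both your proof and the paper's rest on the same underlying ``extra degeneracy'' map, but you package it differently. The paper filters~\( A \mathbin{\otimes_{R}^{\tau }} \operatorname{Bar}^{+} (M ) \) by total bar weight (with \( F^{0} A = R \), \( F^{i} A = A \) for~\( i > 0 \)) so that on each~\( \mathup{gr}^{i} \) for~\( i > 0 \) the higher action maps, the bar multiplications, and the twisting differential all vanish; what remains is the cone on the identification~\( \overline{A} \mathbin{\otimes_{R}} \mathup{gr}^{i-1} C \cong \mathup{gr}^{i} C \), contracted by the two-by-two matrix homotopy~\( s \). Your global homotopy~\( h \) is exactly this~\( s \) before passing to associated graded, and your careful bookkeeping---showing that the \( \mathup{ac}_{k} \)'s, acting on the rightmost letters, commute past~\( h \) and cancel except for the single top map---is precisely the work the filtration does for free. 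What you gain is that you never need to control a tensor-product filtration, so your argument avoids the flatness hypothesis the paper invokes; what the paper gains is that the sign and cancellation verification becomes a one-line matrix computation. One small correction to your closing comment: flatness alone does not put~\( N \) in~\( \categoryformat{QF} (R ) \), so the link you draw to \cref{res:a_infty_qiso=homotopy_equivalence} is not quite right as stated; the lemma itself, however, is fully proved by your contraction.
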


\begin{proof}
	We claim in fact that
	\( \smash{
		(M ,\mathup{ac}_{1} )
		\to
		(A \mathbin{\otimes_{R}^{\tau }} \operatorname{Bar}^{+} (M ),\mathup{ac}_{1} )
	} \)~is a filtered quasi-isomorphism.
	On both sides, filter~\( M \) by the trivial filtration~\( F^{i} M = M \) for all~\( i\ge 0 \).
	Filter~\smash{\( C = \operatorname{Bar}^{+} (A ) = \bigoplus_{n} \smash{\overline{A}}^{\otimes n} [n] \)}
	by~\smash{\( F^{i} C = \bigoplus_{n\le i} \smash{\overline{A}}^{\otimes n} [n] \)}
	and~\( A \)
	by~\( F^{0} A = R \)
	and~\( F^{i} A = A \)
	for~\( i > 0 \).
	Since all the tensor factors are flat, this induces
	a filtration on~\(
		A
		\mathbin{\otimes_{R}^{\tau }}
		\operatorname{Bar}^{+} (M )
	\)
	with~\(
		\mathup{gr}^{0} (A \mathbin{\otimes_{R}^{\tau }} \operatorname{Bar}^{+} (M ) )
		=
		M
	\)
	and
	\[
		\mathup{gr}^{i} (A \mathbin{\otimes_{R}^{\tau }} \operatorname{Bar}^{+} (M ) )
		=
		(\overline{A} \mathbin{\otimes_{R}} \mathup{gr}^{i-1} C \mathbin{\otimes_{R}} M )
		\oplus
		(\mathup{gr}^{i} C \mathbin{\otimes_{R}} M )
		\qquad\text{for~\( i>0 \)}
		.
	\]
	We claim that \( \mathup{gr}^{i} (A \mathbin{\otimes_{R}^{\tau }} \operatorname{Bar}^{+} (M ) ) \)~is contractible for~\( i>0 \).
	Indeed, the differential is given by
	\[
		d_{\mathup{gr}^{i} (A \mathbin{\otimes_{R}^{\tau }} \operatorname{Bar}^{+} (M ) )}
		=
		\begin{pmatrix}
			d_{\overline{A} \mathbin{\otimes_{R}} \mathup{gr}^{i-1} C \mathbin{\otimes_{R}} M}
				& p
		\\
			0
				& d_{\mathup{gr}^{i} C \mathbin{\otimes_{R}} M}
		\end{pmatrix}
	\]
	where \(
		p
		\colon
		\mathup{gr}^{i} C \mathbin{\otimes_{R}} M
		\to
		\smash{\overline{A}} \mathbin{\otimes_{R}} \mathup{gr}^{i-1} C \mathbin{\otimes_{R}} M
	\)
	comes from the isomorphism~\(
		\mathup{gr}^{i} C
		\cong
		\smash{\overline{A}} [1] \mathbin{\otimes_{R}} \mathup{gr}^{i-1} C
	\).
	Clearly, letting
	\[
		s
		=
		\begin{pmatrix}
			0 & 0
		\\
			\mathup{id}&0
		\end{pmatrix},
	\]
	we get~\( \mathup{id} = d s + s d \).
\end{proof}

\begin{lemma}\label{lemma:CobBar_H-Proj}
	Let~\( A \) be a dg-algebra over a dg-algebra~\( R \)
	over~\( k \)
	and let~\( M \in \categoryformat{QF} (A ) \).
	If~\( A \in \categoryformat{QF} (R ) \),
	then~\(
		A
		\mathbin{\otimes_{R}^{\tau }}
		\operatorname{Bar} (A )
		\mathbin{\otimes_{R}^{\tau }}
		M
		\in
		\categoryformat{QF} (A )	
	\).
	Similarly, if \( A \)~is augmented
	and \smash{\( \overline{A} \in \categoryformat{QF} (R ) \)},
	then~\(
		A
		\mathbin{\otimes_{R}^{\tau }}
		\operatorname{Bar}^{+} (A )
		\mathbin{\otimes_{R}^{\tau }}
		M
		\in
		\categoryformat{QF} (A )
	\).
\end{lemma}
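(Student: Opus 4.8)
The plan is to realise $A \mathbin{\otimes_{R}^{\tau }} \operatorname{Bar}(A) \mathbin{\otimes_{R}^{\tau }} M$ as a filtered extension whose graded pieces are scalar extensions of quasi-free $R$-dg-modules, and then to invoke the closure of $\categoryformat{QF}(A)$ under filtered extensions. Set $C = \operatorname{Bar}(A)$ and filter it, exactly as in the proof of \cref{res:a_infty_qiso=homotopy_equivalence}, by $F^{n} C = \bigoplus_{i\le n} A^{\otimes i}$. Since the outer $A$-action multiplies into the leftmost factor and hence preserves bar degree, this induces an exhaustive, bounded-below filtration by $A$-dg-submodules
\[
	F^{n}\bigl(A \mathbin{\otimes_{R}^{\tau }} C \mathbin{\otimes_{R}^{\tau }} M\bigr)
	=
	A \mathbin{\otimes_{R}^{\tau }} (F^{n}C) \mathbin{\otimes_{R}^{\tau }} M.
\]

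First I would identify the associated graded. The key point is that every contribution to the total differential other than the internal differentials of the tensor factors strictly lowers the bar degree, and so dies on $\mathup{gr}^{\smallbullet}$. Indeed, the left twisting differential splits off, via the comultiplication of $C$, the first tensor factor, applies $\tau$ to it, and multiplies the result into the outer copy of~$A$; the right twisting differential symmetrically splits off the last factor, applies $\tau$, and lets it act on $M$ through $\mathup{ac}$; and the multiplicative part of the bar differential $d_{C}$ contracts two adjacent factors. Each of these drops the bar degree by one, while the remaining internal differentials preserve it. Therefore
\[
	\mathup{gr}^{n}\bigl(A \mathbin{\otimes_{R}^{\tau }} C \mathbin{\otimes_{R}^{\tau }} M\bigr)
	\cong
	A \mathbin{\otimes_{R}} A^{\otimes n} \mathbin{\otimes_{R}} M,
\]
equipped only with the tensor-product differential; as a dg-$A$-module this is precisely the scalar extension along the structure map $R \to A$ of the $R$-dg-module $A^{\otimes n} \mathbin{\otimes_{R}} M|_{R}$.

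It then remains to see that this scalar extension lies in $\categoryformat{QF}(A)$. Since $A \in \categoryformat{QF}(R)$, \cref{lemma:restriction_preserves_H-Proj} gives $M|_{R} \in \categoryformat{QF}(R)$, and iterating \cref{res:tensors_of_quasi-free} gives $A^{\otimes n} \in \categoryformat{QF}(R)$ and hence $A^{\otimes n} \mathbin{\otimes_{R}} M|_{R} \in \categoryformat{QF}(R)$. Applying \cref{res:scalar_extension_of_quasi-free} to $R \to A$ then yields $\mathup{gr}^{n} \in \categoryformat{QF}(A)$ for every $n$. As the filtration is bounded below and exhaustive with all graded pieces quasi-free, closure of $\categoryformat{QF}(A)$ under filtered extensions delivers $A \mathbin{\otimes_{R}^{\tau }} \operatorname{Bar}(A) \mathbin{\otimes_{R}^{\tau }} M \in \categoryformat{QF}(A)$. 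The augmented statement runs identically: one filters $\operatorname{Bar}^{+}(A) = \operatorname{Bar}(\overline{A})$ by $F^{n} = \bigoplus_{i\le n} \overline{A}^{\otimes i}$ and feeds the hypothesis $\overline{A} \in \categoryformat{QF}(R)$ into the same scalar-extension computation.

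I expect the only genuine work to be the bookkeeping of the second paragraph: one must confirm, with the correct Koszul signs, that the total differential is filtered and that every non-internal term strictly lowers the bar degree, so that $\mathup{gr}^{\smallbullet}$ really is the naive tensor-product complex and therefore a scalar extension. Once that is in place, the conclusion is a formal concatenation of the three cited lemmata with the filtered-extension property, and no further obstacle remains.
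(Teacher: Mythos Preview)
Your proposal is correct and follows essentially the same approach as the paper: filter the bar construction by tensor degree so that on associated graded only the internal differentials survive, then assemble the conclusion from \cref{res:tensors_of_quasi-free}, \cref{res:scalar_extension_of_quasi-free}, and closure under filtered extensions. The paper's proof compresses all of this into two sentences; your version simply makes explicit the intermediate step $M|_{R}\in\categoryformat{QF}(R)$ via \cref{lemma:restriction_preserves_H-Proj} and the verification that every non-internal term of the total differential strictly lowers bar degree.
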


\begin{proof}
	Filter the bar constructions by the number of tensor factors to kill the bar differential and the \( \tau  \)-differential.
	Then apply
	\cref{res:tensors_of_quasi-free,res:scalar_extension_of_quasi-free}.
\end{proof}

\begin{proposition}\label{res:der_as_A_infty_modules}
	Let \( A \) be an augmented dg-algebra over a dg-algebra~\( R \) over~\( k \) such that \( A \)~is quasi-free as an \( R \)-dg-module.
	Then we have a quasi-equivalence of dg-categories
	\[
		\mathscr{D} (\categoryformat{$A $-dgmod} )
		\cong
		\categoryformat{$A $-mod}_{\infty} (\categoryformat{QF} (R ) )
		.
	\]
\end{proposition}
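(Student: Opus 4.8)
The plan is to present \( \mathscr{D} (\categoryformat{$A $-dgmod} ) \) by the dg-category \( \categoryformat{QF} (A ) \), which is legitimate by the proposition identifying the two, and then to realise the asserted equivalence by the two dg-functors cut out of the twisting-cochain adjunction for the augmented bar coalgebra \( C = \operatorname{Bar}^{+} (A ) \). Explicitly, I would set
\[
	\Psi = A \mathbin{\otimes_{R}^{\tau }} {\slot}
	\colon
	\categoryformat{$A $-mod}_{\infty} (\categoryformat{QF} (R ) )
	\longto
	\categoryformat{QF} (A )
	,
	\qquad
	\Phi = \operatorname{Bar}^{+} (A ) \mathbin{\otimes_{R}^{\tau }} {\slot}
	\colon
	\categoryformat{QF} (A )
	\longto
	\categoryformat{$A $-mod}_{\infty} (\categoryformat{QF} (R ) )
	.
\]
Here \( \Phi \) is nothing but \enquote{regard a dg-module as a strictly unital \ainfty-module}, which on the comodule side is \( N \mapsto C \mathbin{\otimes_{R}^{\tau }} N \), the cofree \( C \)-comodule on the cogenerators~\( N \), with first action map~\( \mathup{ac}_{1} = d_{N} \). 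These are exactly the left adjoint \( \Psi = A \mathbin{\otimes_{R}^{\tau }} {\slot} \) and the right adjoint \( \Phi = C \mathbin{\otimes_{R}^{\tau }} {\slot} \) from the twisting-cochain adjunction, restricted to the indicated subcategories.

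The first thing to verify is that these functors take values where claimed. For~\( \Phi \), the underlying \( R \)-complex of~\( C \mathbin{\otimes_{R}^{\tau }} N \) is~\( (N ,\mathup{ac}_{1} ) = N|_{R} \); since \( A \in \categoryformat{QF} (R ) \) by hypothesis, restriction preserves quasi-freeness by \cref{lemma:restriction_preserves_H-Proj}, so \( N|_{R} \in \categoryformat{QF} (R ) \) and \( \Phi (N ) \in \categoryformat{$A $-mod}_{\infty} (\categoryformat{QF} (R ) ) \). For~\( \Psi \), I would filter \( A \mathbin{\otimes_{R}^{\tau }} \operatorname{Bar}_{R}^{+} (M ) = \bigoplus_{n} A \mathbin{\otimes_{R}} \overline{A}^{\otimes n} \mathbin{\otimes_{R}} M \) by the number of bar factors. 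Every part of the differential except the internal ones lowers this count, so \( \mathup{gr}^{p} = A \mathbin{\otimes_{R}} \overline{A}^{\otimes p} \mathbin{\otimes_{R}} (M ,\mathup{ac}_{1} ) \). As \( \overline{A} \) is a direct summand of~\( A \) as an \( R \)-dg-module (via the augmentation), it too lies in \( \categoryformat{QF} (R ) \); hence \( \overline{A}^{\otimes p} \mathbin{\otimes_{R}} (M ,\mathup{ac}_{1} ) \in \categoryformat{QF} (R ) \) by \cref{res:tensors_of_quasi-free}, and inducing up along~\( R \to A \) keeps it in \( \categoryformat{QF} (A ) \) by \cref{res:scalar_extension_of_quasi-free}. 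Closure of \( \categoryformat{QF} (A ) \) under filtered extensions—exactly the bookkeeping of \cref{lemma:CobBar_H-Proj}—then yields \( \Psi (M ) \in \categoryformat{QF} (A ) \).

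With the codomains settled, it remains to see that the unit and counit of \( \Psi \dashv \Phi \) become homotopy equivalences on these subcategories. The counit \( \Psi \Phi (N ) = A \mathbin{\otimes_{R}^{\tau }} C \mathbin{\otimes_{R}^{\tau }} N \to N \) is the (augmented) two-sided bar resolution, hence a quasi-isomorphism by the augmented analogue of \cref{lemma:CobBar_resolution}; since both source and target lie in \( \categoryformat{QF} (A ) \), it is even a homotopy equivalence. The unit \( M \to \Phi \Psi (M ) = A \mathbin{\otimes_{R}^{\tau }} \operatorname{Bar}^{+} (M ) \) is precisely the map of \cref{res:ainfty_bar_resolution}—whose flatness hypotheses hold because quasi-free modules are in particular flat—so it is an \ainfty-quasi-isomorphism, and as both objects lie in \( \categoryformat{$A $-mod}_{\infty} (\categoryformat{QF} (R ) ) \) it is a homotopy equivalence by \cref{res:a_infty_qiso=homotopy_equivalence}. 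A dg-adjunction whose unit and counit are objectwise homotopy equivalences is a quasi-equivalence: applying \( \operatorname{Hom} (\slot ,N' ) \) and \( \operatorname{Hom} (N ,\slot ) \), which preserve homotopy equivalences, to the counit and using the adjunction isomorphism on hom-complexes gives quasi-full-faithfulness of~\( \Phi \), while the unit supplies essential surjectivity. This is the asserted equivalence.

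I expect the genuine work to be twofold. The computational point is the verification that \( \Psi \) lands in \( \categoryformat{QF} (A ) \): one must check that no component of the differential raises the bar degree, so that the associated graded is the induced module claimed above, before invoking \cref{res:tensors_of_quasi-free,res:scalar_extension_of_quasi-free} and filtered extensions. The conceptual point—and the one most easily glossed over—is to confirm that \( \Phi \) and \( \Psi \) really are the restrictions of a \emph{single} twisting-cochain adjunction: that \enquote{view as an \ainfty-module} is computed by \( C \mathbin{\otimes_{R}^{\tau }} {\slot} \) with \( \mathup{ac}_{1} = d_{N} \), and that the adjunction unit agrees on the nose with the comparison map of \cref{res:ainfty_bar_resolution}, so that the two bar-resolution lemmas do identify unit and counit. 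Once this identification is pinned down, the passage from \enquote{unit and counit are homotopy equivalences} to \enquote{quasi-equivalence of dg-categories} is purely formal.
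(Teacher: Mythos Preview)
Your proof is correct and uses exactly the same ingredients as the paper's: the model \( \categoryformat{QF}(A) \), the twisting-cochain adjunction for \( C = \operatorname{Bar}^{+}(A) \), and the lemmas \ref{lemma:restriction_preserves_H-Proj}, \ref{lemma:CobBar_H-Proj}, \ref{res:ainfty_bar_resolution}, and \ref{res:a_infty_qiso=homotopy_equivalence}. The only difference is packaging: the paper constructs just the one functor \( \Phi \) and checks quasi-full-faithfulness directly by the chain
\[
\operatorname{Hom}_{\categoryformat{$A$-mod}_{\infty}}(M,N)
= \operatorname{Hom}_{C}(C \mathbin{\otimes_{R}^{\tau}} M, C \mathbin{\otimes_{R}^{\tau}} N)
= \operatorname{Hom}_{A}(A \mathbin{\otimes_{R}^{\tau}} C \mathbin{\otimes_{R}^{\tau}} M, N)
\simeq \operatorname{Hom}_{\mathscr{D}(\categoryformat{$A$-dgmod})}(M,N),
\]
rather than verifying separately that unit and counit are homotopy equivalences; your adjoint-equivalence formulation unpacks to the same computation.
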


\begin{proof}
	We use the presentation~\(
		\mathscr{D} (\categoryformat{$A $-dgmod} )
		\cong
		\categoryformat{QF} (A )
	\).
	Since \( M \)~is quasi-free over~\( A \)
	and \( A \)~is quasi-free over~\( R \),
	we obtain from \cref{lemma:restriction_preserves_H-Proj} that~\( M|_{R} \in \categoryformat{QF} (R ) \).
	Thus we
	obtain a dg-functor
	\[
		\categoryformat{QF} (A )
		\longto
		\categoryformat{$A $-mod}_{\infty} (\categoryformat{QF} (R ) ).
	\]
	To see that it is fully faithful,
	we notice that if \(
		M , N
		\in
		\categoryformat{$A $-mod}_{\infty} (\categoryformat{QF} (R ) )
	\),
	then
	\begin{align*}
		\MoveEqLeft
		\operatorname{Hom}_{\categoryformat{$A $-mod}_{\infty} (\categoryformat{QF} (R ) )} (M ,N )
			=
			\operatorname{Hom}_{\operatorname{Bar}^{+} (A )} (\operatorname{Bar}^{+} (A ) \mathbin{\otimes_{R}^{\tau }} M ,\operatorname{Bar}^{+} (A ) \mathbin{\otimes_{R}^{\tau }} N )
	\\
			&=
			\operatorname{Hom}_{A} (A \mathbin{\otimes_{R}^{\tau }} \operatorname{Bar}^{+} (A ) \mathbin{\otimes_{R}^{\tau }} M ,N )
				\cong
				\operatorname{Hom}_{\mathscr{D} (\categoryformat{$A $-dgmod} )} (M ,N )
	\end{align*}
	where the last equivalence
	is by~\cref{res:ainfty_bar_resolution,lemma:CobBar_H-Proj}.
	Quasi-essential surjectivity also follows from \cref{res:ainfty_bar_resolution}.
\end{proof}

\begin{proposition}\label{res:A_infty_hu_is_A_infty}
	For any augmented dg-algebra~\( A \)
	which is quasi-free over~\( R \),
	the inclusion~\(
		\categoryformat{$A $-mod}_{\infty} (\categoryformat{QF} (R ) )
		\subset
		\categoryformat{$A $-mod}_{\infty}^{\categoryformat{hu}} (\categoryformat{QF} (R ) )
	\)
	is a quasi-equivalence of dg-categories.
\end{proposition}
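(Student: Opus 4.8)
The plan is to produce an explicit quasi-inverse to the inclusion~$\iota$ and to check the two defining conditions of a quasi-equivalence separately: that $H^{0} (\iota )$ is essentially surjective and that $\iota$ induces quasi-isomorphisms on all hom-complexes. The quasi-inverse is the \emph{strictification} functor
\[
	S \colon
	\categoryformat{$A $-mod}_{\infty}^{\categoryformat{hu}} (\categoryformat{QF} (R ) )
	\longto
	\categoryformat{$A $-mod}_{\infty} (\categoryformat{QF} (R ) ),
	\qquad
	S (M ) = A \mathbin{\otimes_{R}^{\tau }} \operatorname{Bar}_{A} (M ),
\]
obtained by applying the cobar functor $A \mathbin{\otimes_{R}^{\tau }} {\slot}$ to the $\operatorname{Bar} (A )$-comodule $\operatorname{Bar}_{A} (M )$ underlying~$M$. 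Its output is an honest $A$-dg-module, hence a strictly unital \ainfty-module; filtering by the number of bar tensor factors kills both $\tau$-differentials, and \cref{res:tensors_of_quasi-free,res:scalar_extension_of_quasi-free} together with closure of $\categoryformat{QF} (A )$ under filtered extensions give $S (M ) \in \categoryformat{QF} (A )$, exactly as in \cref{lemma:CobBar_H-Proj}. Thus $S$ is a well-defined dg-functor.

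For essential surjectivity, the comparison morphism is the unit of the comodule--module adjunction, the \ainfty-morphism
\[
	\eta_{M}
	\colon
	M
	\longto
	S (M ),
	\qquad
	\operatorname{Bar}_{A} (M )
	\longto
	\operatorname{Bar} (A ) \mathbin{\otimes_{R}^{\tau }} \bigl(A \mathbin{\otimes_{R}^{\tau }} \operatorname{Bar}_{A} (M ) \bigr).
\]
I claim $\eta_{M}$ is an \ainfty-quasi-isomorphism whenever $M$ is homotopy-unital. Granting this, both $M$ and $S (M )$ are quasi-free over~$R$, so the argument of \cref{res:a_infty_qiso=homotopy_equivalence} --- whose proof takes place entirely among cofree $\operatorname{Bar} (A )$-comodules and so applies verbatim to homotopy-unital modules --- upgrades $\eta_{M}$ to a homotopy equivalence. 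As $S (M )$ is strictly unital, this exhibits $M \cong \iota S (M )$ in $H^{0} (\categoryformat{$A $-mod}_{\infty}^{\categoryformat{hu}} (\categoryformat{QF} (R ) ) )$.

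The claim that $\eta_{M}$ is a quasi-isomorphism is the heart of the matter and the main obstacle; it is precisely here that homotopy-unitality is indispensable, as it fails for a module with vanishing higher action maps. I would establish it in two moves. By the triangle identity, $A \mathbin{\otimes_{R}^{\tau }} \eta_{M}$ is a section of the counit $A \mathbin{\otimes_{R}^{\tau }} \operatorname{Bar}_{A} (S (M ) ) \to S (M )$, which is a quasi-isomorphism by \cref{lemma:CobBar_resolution} since $S (M )$ is an honest $A$-dg-module; hence $A \mathbin{\otimes_{R}^{\tau }} \eta_{M}$ is one too. The delicate second move is to descend this to $\eta_{M}$ itself: filtering both comodules by the number of bar tensor factors reduces, on associated graded, to a bar-type complex for which the homotopy unit on $H^{\smallbullet} (M )$ supplies the contracting homotopy witnessing that the primitive part of $\eta_{M}$ is a quasi-isomorphism. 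This is the direct analogue of \cref{res:ainfty_bar_resolution}, with the strict unit used there to build the contraction~$s$ now replaced by the homotopy unit; the bookkeeping needed to assemble the graded contractions into a genuine quasi-isomorphism (via the Mittag--Leffler/telescope arguments already employed above) is the real technical work.

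Finally, for homotopy full faithfulness I would fix strictly unital $M ,N$ and show the inclusion of hom-complexes into the non-unital hom-complex $\operatorname{Hom}_{\categoryformat{$A $-mod}_{\infty}^{\categoryformat{nu}}} (M ,N )$ is a quasi-isomorphism. The source is identified with $\operatorname{Hom}_{\mathscr{D} (\categoryformat{$A $-dgmod} )} (M ,N )$ by \cref{res:der_as_A_infty_modules}. For the target, the homotopy equivalence $\eta_{N}$ of the previous step reduces it to the hom into $S (N )$, and since $S (N )$ is an honest $A$-dg-module the comodule--module adjunction rewrites this as
\[
	\operatorname{Hom}_{A} \bigl(A \mathbin{\otimes_{R}^{\tau }} \operatorname{Bar}_{A} (M ),S (N ) \bigr)
	=
	\operatorname{Hom}_{A} \bigl(S (M ),S (N ) \bigr),
\]
which computes $\operatorname{Hom}_{\mathscr{D} (\categoryformat{$A $-dgmod} )} (M ,N )$ because $S (M ),S (N ) \in \categoryformat{QF} (A )$ and $S (M ) \simeq M$, $S (N ) \simeq N$. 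Both identifications are built from the same bar resolution and differ only through the normalization comparison $\operatorname{Bar}^{+} (A ) \to \operatorname{Bar} (A )$, so the resulting square commutes up to homotopy and the inclusion is a quasi-isomorphism; checking this compatibility is routine but tedious.
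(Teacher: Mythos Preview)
Your overall architecture---strictify via cobar-of-bar, prove the unit is an \ainfty-quasi-isomorphism, then compare hom complexes through the bar resolution---is the right shape, and the full-faithfulness paragraph is essentially what the paper does (it remarks that the hom computation of \cref{res:der_as_A_infty_modules} goes through with either the augmented or the non-augmented bar construction, so both hom complexes compute $\operatorname{Hom}_{\mathscr{D}(\categoryformat{$A$-dgmod})}(M,N)$).

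The real divergence is in essential surjectivity, and there your ``second move'' is precisely the step you have not done. You want to show that the unit $\eta_{M}\colon M\to A\mathbin{\otimes_{R}^{\tau}}\operatorname{Bar}_{A}(M)$ is an \ainfty-quasi-isomorphism for \emph{homotopy}-unital~$M$. The contracting homotopy in \cref{res:ainfty_bar_resolution} is built from the \emph{strict} unit; replacing it by a cohomological unit does not give a chain-level contraction, and ``the homotopy unit on $H^{\smallbullet}(M)$ supplies the contracting homotopy'' is exactly the assertion that needs proof. Your filtration-by-tensor-factors sketch does not pin down what the associated graded of $\eta_{M}$ looks like or why it is a quasi-isomorphism, and the Mittag--Leffler packaging only helps once you already have the graded statement. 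So as written this is a gap, not bookkeeping.

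The paper sidesteps this entirely with a one-line trick you should know: any (possibly non-unital, hence any homotopy-unital) \ainfty-module over~$A$ is automatically a \emph{strictly} unital \ainfty-module over the unitalization~$A\oplus R$. One then applies \cref{res:ainfty_bar_resolution} over the augmented algebra~$A\oplus R$ to get
\[
M\;\longisoto\;(A\oplus R)\mathbin{\otimes_{R}^{\tau}}\operatorname{Bar}^{+}_{A\oplus R}(M),
\]
and restricts along the unital map $A=\overline{A}\oplus R\hookrightarrow A\oplus R$. The target is a genuine $(A\oplus R)$-dg-module, hence strictly unital over~$A$ after restriction, and is quasi-free over~$R$ by the usual filtration argument. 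This replaces your delicate ``descent'' step by an application of a lemma you already have; no new contracting homotopy needs to be invented.
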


We believe this also holds for non-augmented dg-algebras,
following a proof like~\textcite{barcategory}, but the proof is simpler in this special case.

\begin{proof}
	We must prove that
	the embedding is quasi-essentially surjective,
	i.e.\ that any homotopy-unital module can
	be resolved by a strictly unital one.
	If~\( M \in \categoryformat{$A $-mod}_{\infty}^{\categoryformat{hu}} (\categoryformat{QF} (R ) ) \),
	we may regard~\( M \) as a strictly unital \( A \oplus R \)-module. Applying \cref{res:ainfty_bar_resolution},
	we obtain a quasi-isomorphism~\smash{\(
		M
		\to
		(A \oplus R )
		\mathbin{\otimes_{R}^{\tau }}
		\operatorname{Bar}_{A \oplus R}^{+} (M )
	\)}
	over~\( A \oplus R \).
	Restricting by the unital dg-algebra map~\smash{\( A = \overline{A} \oplus R \into A \oplus R \)},
	we obtain that it is also a quasi-equivalence over~\( A \).
	Since 
	\(
		(A \oplus R )
		\mathbin{\otimes_{R}^{\tau }}
		\operatorname{Bar}_{A \oplus R}^{+} (M )
	\)~is strictly unital over~\( A \), we have proved essential surjectivity.
	It is also quasi-free over~\( R \), as we can filter the bar construction to kill the bar differential until we are left with a tensor product of quasi-free \( R \)-modules.
	To prove quasi-fully faithfulness, we note that the calculation of the hom space in~\( \mathscr{D} (\categoryformat{$A $-dgmod} ) \)
	in the previous proof works no matter if we are using the augmented or non-augmented bar construction
	(one then applies \cref{lemma:CobBar_resolution} in place of~\cref{res:ainfty_bar_resolution}).
	Therefore, hom spaces agree on~\( H^{0} \).
	By shifting, they also agree on~\( H^{i} \) for all~\( i \).
\end{proof}

\endgroup

\begingroup

\chapter{Koszul duality}

The material below is a cross between three approaches to Koszul duality
for triangulated categories of modules. Firstly, there is a general approach of \textcite{positselski}: the author considers a pair of a dg-algebra~\( A \) over a ring~\( \mathbb{K} \) and
a dg-coalgebra~\( C \) given by its bar construction. The duality couples certain
\emph{exotic} derived categories of dg-modules (resp.,~dg-comodules).

Secondly, there is a similar approach due to \textcite{keller} with one difference: we
stay in the world of usual derived categories all the way. Keller's Koszul
duality is a special case of a general theorem describing a subcategory in a
dg-category~\( \mathscr{C} \) generated by a compact object \( M \in \mathscr{C} \).

Lastly, there is an approach of \textcite{linear_koszul}.
Its advantage is that the authors work over a possibly non-affine base.
Its drawback is that the authors work in categories of \emph{graded} dg-modules over
dg-algebras equipped with the second (inner) grading. We mostly retell
linear Koszul duality below replacing the use of the inner grading by Keller's
considerations.

\section{Koszul complex}

In what follows we develop a story in the spirit of
quadratic-linear-scalar duality due to Positselski.

Recall the most classical setting for it. Let~\( \mathbb{K} \) be a field of characteristic zero, and fix a commutative \( \mathbb{K} \)-algeba~\( S \) and
a finite rank free complex of \( S \)-modules~\( M \).
Denote~\( M^{*} = \operatorname{Hom}_{S} (M ,S ) \).
Consider the pair of quadratic dual dg-algebras \( A = \operatorname{Sym}_{S} (M[1] ) \)
and~\( A^{!} = \operatorname{Sym}_{S} (M^{*} [{-2}] ) \), with differentials obtained from the one in~\( M \) by the Leibniz rule. Notice that symmetric algebra in the definition is understood in the
graded sense.
Letting~\( \tau  \colon A^{*} \to A^{!} [{1}] \) be the twisting
cochain given by the identity on~\( M^{*} \),
it is known that
\(
	K
	=
	\smash{(A^{*} \mathbin{\otimes_{S}^{\tau }} A^{!},d_{A^{*} \mathbin{\otimes_{S}^{\tau }} A^{!}} )}
\)~has a structure of an
\( A \)--\( A^{!} \)-dg-bimodule quasi-isomorphic to the trivial module~\( S \) both as an \( A \)- and as an \( A^{!} \)-module. The bimodule~\( K \) is called the Koszul complex.

\begin{remark}
	One way to find cohomology of~\( K \) is as follows. Consider the complex of \( S \)-modules~\( C_{M} = C(\mathup{id}_{M} )[{1}] \). The Koszul complex~\( K \) can be realized as~\( \operatorname{Sym}_{S} (C_{M} ) \) as a graded \( S \)-module. Evidently, it is quasi-isomorphic to~\( S \). The commuting actions of \( A \) and~\( A^{!} \) are recovered in the following way. Consider the complex of \( S \)-modules~\( C_{M}\oplus C_{M}^{*} \) equipped with the canonical non-degenerate, graded skew-symmetric bilinear form. This allows us to define the \textdef{Heisenberg algebra}~\(
		\operatorname{Heis}_{S} (C_{M} \oplus{C_{M}^{*}} )
	\)
	associated to~\( C_{M} \oplus{C_{M}^{*}} \)
	as the free algebra with generators~\( C_{M} \oplus{C_{M}^{*}} \)
	and relations~\(
		a b - (-1)^{\lvert a\rvert \lvert b\rvert} b a
		=
		\langle a,b \rangle
	\).
	It can be rewritten as
 	\[
 		\operatorname{Heis}_{S} (C_{M} \oplus{C_{M}^{*}} )
 		=
 		\operatorname{Heis}_{S} (M[2] \oplus M^{*}[-1] )
 		\mathbin{\otimes_{S}}
 		\operatorname{Heis}_{S} (M[{1}] \oplus M^{*} [-2] )
 	\]
	and maps naturally into the endomorphism algebra of the \( S \)-module~\( K \). It contains the commuting  subalgebras \( A = \operatorname{Sym}_{S} (M[1] ) \)
	and~\( A^{!} = \operatorname{Sym}_{S} (M^{*} [-2] ) \). One checks directly that
	\(
		\smash{\operatorname{Heis}_{S} (C_{M} \oplus{C_{M}^{*}} )}
	\)~is a dg-subalgebra in~\( \operatorname{End}_{S} (K ) \) and both subalgebras \( A \) and~\( A^{!} \) are dg-subalgebras
	in~\(
		\smash{\operatorname{Heis}_{S} (C_{M} \oplus{C_{M}^{*}} )}
	\).
\end{remark}

\section{Twisted case}\label{sec:twisted_case}

Next we replace the ring~\( S \) with a graded algebra and make its interaction with \( A \) and~\( A^{!} \) more complicated. Fix a commutative ring~\( \mathbb{K} \) of characteristic zero.
Take  two free \( \mathbb{K} \)-modules of ﬁnite rank \( M \) and~\( N \).
Let \( \varphi  \colon M \to N^{*} \) be a \( \mathbb{K} \)-linear map. We  consider the dg-algebras over \( \mathbb{K} \) given by 
\begin{align*}
	A &= \operatorname{Heis}_{\mathbb{K}} (M[{1}] \oplus N[{-1}] ),
\\
	A^{!} &= \operatorname{Sym}_{\mathbb{K}} (N[{-1}] \xto{\varphi^{*}}{M^{*} [{-2}]} ).
\end{align*}
The first algebra is the Heisenberg algebra on~\( M[{1}]\oplus N[{-1}] \)
with respect to the canonical, graded skew-symmetric bilinear form induced by~\( \varphi  \). 

\begin{remark}
	Clearly this form restricts trivially both to \( M[{1}] \) and to~\( N[{-1}] \). It follows that \( S = \operatorname{Sym}_{\mathbb{K}} (N[{-1}] ) \)~is a subalgebra in both \( A \) and~\( A^{!} \). However, \( S \)~is not a dg-subalgebra in~\( A^{!} \). Also, \( A \)~is not a free graded skew-commutative algebra over~\( S \) (in fact, \( S \)~is not even central in~\( A \)). In this sense, the previous case is a toy example for the present one.
\end{remark}

We generalize the construction of the Koszul complex. Consider the complex of \( \mathbb{K} \)-modules:
\[
	L
	=
	\bigl((M^{*} \oplus N )[{-1}] \xrightarrow{(-\mathup{id}_{M^{*}},\varphi^{*} )} M^{*} [{-2}] \bigr).
\]
Clearly it is quasi-isomoprhic to~\( N[{-1}] \). 
Consider~\( K = \operatorname{Sym}_{\mathbb{K}} (L ) \) with the differential extended to symmetric powers by the Leibniz rule.
It is quasi-isomorphic to~\( \operatorname{Sym}_{\mathbb{K}} (N[{-1}] ) \) as a complex of \( \mathbb{K} \)-modules.
Generalizing the toy example, we construct two commuting actions of the dg-algebras \( A \) and~\( A^{!} \) on~\( K \) explicitly.

Consider the Heisenberg algebra of the complex~\( L\oplus L^{*} \),
where we write~\( L^{*} = \operatorname{Hom}_{S} (L ,S ) \):
\[
	\operatorname{Heis}_{\mathbb{K}} (L \oplus L^{*} )
	=
	\operatorname{Heis}_{\mathbb{K}} \bigl(M[{2}] \oplus N[{1}] \oplus M[{1}] \oplus{M^{*} [{-1}]} \oplus N[{-1}] \oplus{M^{*} [{-2}]} \bigr).
\]
It embeds into the dg-algebra of endomorphisms of~\( \operatorname{Sym}_{\mathbb{K}} (L ) \) and acts naturally on~\( K \).

Notice that the graded algebra~\(
	A = \operatorname{Heis}_{\mathbb{K}} (M[{1}] \oplus N[{-1}] )
\)
embeds into the algebra~\(
	\operatorname{Heis}_{\mathbb{K}} (L \oplus L^{*} )
\)
in the following way: While the embedding of~\( M[{1}] \) is the obvious one,
\(
	N[{-1}]
\)~maps
to~\(
	M^{*} [{-1}]\oplus N[{-1}]
\)
via~\(
	(\varphi^{*},\mathup{id} )
\).
One checks directly that both the relations and the differentials match.

Notice  that the dg-algebra~\(
	A^{!}
	=
	\operatorname{Sym}_{\mathbb{K}} (N[{-1}] \to{M^{*} [{-2}]} )
\)
also embeds naturally into \( \operatorname{Heis}_{\mathbb{K}} (L \oplus L^{*} ) \).
Moreover its image belongs to the centralizer of the image of~\( A \).
We proved the following statement.

\begin{lemma}\label{res:K_is_a_resolution}
	The complex \( K \) has a natural structure of an
	\( A \)--\( A^{!} \)-dg-bimodule quasi-isomorphic
	to~\(
		S = \operatorname{Sym}_{\mathbb{K}} (N[{-1}] )
	\)
	both as an \( A \)-module and as an \( A^{!} \)-module. 
\end{lemma}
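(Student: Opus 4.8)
The plan is to take the dg-bimodule structure as already produced by the construction above and then establish the two quasi-isomorphism claims by exhibiting explicit module maps, one in each direction. Concretely, the commuting embeddings $A, A^! \hookrightarrow \operatorname{Heis}_{\mathbb{K}}(L \oplus L^*)$ make $K = \operatorname{Sym}_{\mathbb{K}}(L)$ a dg-bimodule once one checks that the generators of $A$ and $A^!$ act by $d_K$-chain maps. Since $A$ carries the zero differential and the differential of $A^!$ is the Leibniz extension of $\varphi^*$, this is exactly the matching of differentials asserted in the construction, and I would record it by computing the graded commutators of $d_K$ with the creation and annihilation operators.

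The common input for both quasi-isomorphisms is a splitting of $L$ as a complex of $\mathbb{K}$-modules. The cocycles $\{(\varphi^*(n), n) : n \in N\} \subset (M^* \oplus N)[-1]$ form a subcomplex with zero differential isomorphic to $N[-1]$, while $d_L$ restricts to an isomorphism on the complementary subcomplex $C = (M^*[-1] \xrightarrow{-\operatorname{id}} M^*[-2])$, so that $L = N[-1] \oplus C$ with $C$ contractible. Applying $\operatorname{Sym}_{\mathbb{K}}$ (and using that $\mathbb{K}$ has characteristic zero) then yields $K = S \otimes_{\mathbb{K}} \operatorname{Sym}_{\mathbb{K}}(C)$ with $\operatorname{Sym}_{\mathbb{K}}(C) \simeq \mathbb{K}$, hence $K \simeq S$ as complexes; the task is to promote this to each module structure.

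For the $A^!$-side I would use that $A^! = \operatorname{Sym}_{\mathbb{K}}(N[-1] \oplus M^*[-2])$ sits inside $K$ as a sub-dg-algebra acting by multiplication, with $S = A^!/(M^*[-2])$ the evident quotient module. The projection $L \to N[-1]$ killing $M^*[-1]$ and $M^*[-2]$ induces an algebra chain map $q \colon K \to S$ that restricts on $A^!$ to the quotient $A^! \to S$ and is therefore $A^!$-linear; it is a quasi-isomorphism by the splitting above (equivalently, by filtering $K$ by the number of $N$-factors, so that the associated graded differential is the contractible Koszul differential of $C$).

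The $A$-side is the subtle point, and the one I expect to be the main obstacle: when $\varphi \neq 0$ there is no augmentation $A \to S$, so the comparison map must run the other way. First I would identify $S = \operatorname{Sym}_{\mathbb{K}}(N[-1])$ with the Fock module of the Heisenberg algebra $A$, on which $N[-1]$ acts by multiplication and $M[1]$ by contraction against $\varphi(e) \in N^*$. Then I would define $\sigma \colon S \to K$ to be the algebra map sending a generator $f \in N[-1]$ to the cocycle $\varphi^*(f) + f \in \ker(d_L)$; this is precisely the inclusion $S = S \otimes 1 \hookrightarrow S \otimes_{\mathbb{K}} \operatorname{Sym}_{\mathbb{K}}(C) = K$ of the splitting, hence a quasi-isomorphism of complexes. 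The work lies in checking that $\sigma$ is $A$-linear. Compatibility with the creation operators is automatic because $\sigma$ is an algebra map; compatibility with the annihilation operators $M[1]$ follows because, for $e \in M[1]$, the two maps $S \to K$ obtained by applying $\sigma$ after the action of $e$ and the action of $e$ after $\sigma$ are both $\sigma$-derivations, so they agree once they agree on the generators $f \in N$, where both return the scalar $\varphi(e)(f) = \langle e, \varphi^*(f)\rangle$. This single identity, reconciling the contraction defining the $A$-action on $K$ with the contraction defining the Fock action on $S$, is the heart of the matter; the remaining verifications are formal.
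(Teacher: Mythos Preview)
Your proposal is correct and follows the same underlying construction as the paper: the bimodule structure comes from the commuting embeddings of $A$ and $A^{!}$ into $\operatorname{Heis}_{\mathbb{K}}(L\oplus L^{*})$, and the quasi-isomorphism with $S$ rests on the splitting $L\cong N[-1]\oplus C$ with $C$ contractible. The paper's ``proof'' is in fact the discussion \emph{preceding} the lemma (concluded with ``We proved the following statement''), which stops after observing $K\simeq S$ as complexes of $\mathbb{K}$-modules and recording that the two actions commute; it does not write down module-level comparison maps.

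What you add is precisely those maps: the $A^{!}$-linear projection $q\colon K\to S$ and the $A$-linear Fock inclusion $\sigma\colon S\to K$, together with the derivation argument reducing $A$-linearity of $\sigma$ to the single identity $\langle\varphi(e),f\rangle=\langle e,\varphi^{*}(f)\rangle$. This is genuinely more than the paper spells out, and your observation that no augmentation $A\to S$ exists when $\varphi\neq 0$, forcing the comparison to run $S\to K$ on the $A$-side, is exactly the point that makes this step nontrivial. So: same route, but you have paved a stretch the paper walks over in silence.
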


\begin{remark}\label{res:K_is_quasi-free}
	In fact, it is not hard to see that~\(
		K \cong A^{!} \mathbin{\otimes^{\tau }} A^{*}
	\),
	where the twisting cochain~\( \tau  \colon A^{*} \to A^{!} [{1}] \) is given by the identity on~\( M^{*} [{-1}] \).
	Note that by filtering~\( A^{*} \) by the number of tensor
	factors, we kill the twisted differential, and hence we obtain a filtration of~\( K \) by a finite number of free, finitely generated \( A^{!} \)-modules. Therefore, \( K \)~is semifree over~\( S \) (and hence quasi-free). Also, \( K \)~is free of finite sank as a graded \( A^{!} \)-module and
	therefore compact and h-projective in \( \categoryformat{$A^{!}$-dgmod} \).
\end{remark}

\section{Compact generators}

Recall that an object~\( M \) in a pretriangulated dg-category~\( \mathscr{C} \)
is called \textdef{compact} 
if the functor~\(
	\operatorname{Hom}_{\mathscr{C}} (M ,\slot )
\)
commutes with direct sums.
We call~\( M \) a \textdef{compact generator} if its
\textdef{right orthogonal}~\( M^{\perp} \) in the homotopy category,
i.e.~the full subcategory of objects annihilated
by~\(
	H^{0} (\operatorname{Hom}_{\mathscr{C}} (M ,\slot ) )
\),
is~\( 0 \).

\begin{theorembreak}[Theorem {\parencite[section 7.3]{keller}}]
	Consider a dg-algebra~\( S \) over a commutative ring~\( \mathbb{K} \).
	Let~\( A \) be a dg-algebra in the category of dg-bimodules over~\( S \).
	For~\( M\in \categoryformat{$A $-dgmod} \) a compact generator,
	 we have a quasi-equivalence of dg-categories
	 \[
	 	\mathscr{D} (\categoryformat{$A $-dgmod} )
	 	\longisoto
	 	\mathscr{D} (\categoryformat{$E $-dgmod} )
	 \]
	 given by~\(
	 	N \mapsto \mathup{R} \!\operatorname{Hom}_{A} (M ,N )
	 \).
	 Here, \( E \) denotes the dg-algebra~\(
	 	E
	 	=
	 	\mathup{R} \!\operatorname{Hom}_{A} (M ,M )^{\mathup{op}}
	 \).
\end{theorembreak}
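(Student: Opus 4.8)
The plan is to realize the equivalence through the adjoint pair attached to the bimodule~\( M \) and to use the compact-generation hypothesis to force the unit and counit to be isomorphisms. First I would replace~\( M \) by a quasi-isomorphic object of~\( \categoryformat{QF} (A ) \); this changes neither side of the asserted equivalence, and after it \( M \)~is h-projective, so that \( E = \operatorname{Hom}_{A} (M ,M )^{\mathup{op}} \)~is an honest dg-algebra computing \( \mathup{R}\!\operatorname{Hom}_{A} (M ,M ) \), and the naive dg-functor
\[
	F = \operatorname{Hom}_{A} (M ,\slot ) \colon \categoryformat{$A $-dgmod} \longto \categoryformat{$E $-dgmod}
\]
already computes \( \mathup{R}\!\operatorname{Hom}_{A} (M ,\slot ) \). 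Regarding \( M \)~as an \( A \)--\( E \)-bimodule, \( F \)~has the dg-left-adjoint \( G = M \mathbin{\otimes_{E}} \slot \), and I would derive this adjunction to an adjoint pair \( (\mathup{L} G ,F ) \) between \( \mathscr{D} (\categoryformat{$E $-dgmod} ) \) and \( \mathscr{D} (\categoryformat{$A $-dgmod} ) \), with \( \mathup{L} G \)~computed on quasi-free \( E \)-modules.

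The heart of the argument is then to check that the unit \( \eta \colon \mathup{id} \to F \mathup{L} G \) and the counit \( \varepsilon \colon \mathup{L} G F \to \mathup{id} \) are isomorphisms. For the unit I would evaluate on the free module~\( E \): here \( \mathup{L} G (E ) = M \mathbin{\otimes_{E}} E = M \) and \( F (M ) \cong E \), so \( \eta_{E} \)~is an isomorphism. The full subcategory of \( \mathscr{D} (\categoryformat{$E $-dgmod} ) \) on which \( \eta \)~is an isomorphism is closed under shifts, cones, and arbitrary direct sums (both functors being exact, with \( F \)~preserving coproducts precisely because \( M \)~is compact); since it contains~\( E \), which generates \( \mathscr{D} (\categoryformat{$E $-dgmod} ) \) as a localizing subcategory, it is the whole category. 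Hence \( \mathup{L} G \)~is fully faithful.

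For the counit I would use that \( F \)~reflects zero objects: by construction \( H^{n} F (C ) = H^{0} \operatorname{Hom}_{\mathscr{D} (\categoryformat{$A $-dgmod} )} (M ,C[{n}] ) \), so \( F (C ) = 0 \)~says exactly that \( C \in M^{\perp} \), which is~\( 0 \) by the compact-generation hypothesis. Applying \( F \)~to \( \varepsilon_{N} \) and invoking the triangle identity \( F (\varepsilon_{N} ) \circ \eta_{F (N )} = \mathup{id} \) together with the invertibility of~\( \eta \) already established shows that \( F (\varepsilon_{N} ) \)~is an isomorphism; hence the cone of \( \varepsilon_{N} \)~lies in~\( M^{\perp} \) and therefore vanishes. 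Thus \( \varepsilon \)~is an isomorphism, and \( (\mathup{L} G ,F ) \)~is an adjoint equivalence of homotopy categories.

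Finally I would upgrade this to a quasi-equivalence of dg-categories, presenting both sides by their quasi-free models: \( F \)~is a genuine dg-functor whose induced maps on hom-complexes are quasi-isomorphisms (this is fully faithfulness of \( \mathup{L} G \) read through the adjunction), while quasi-essential surjectivity is the counit computation. The main obstacle I anticipate is the bookkeeping around coproduct preservation—the whole mechanism rests on \( F \)~commuting with direct sums, which is where compactness of~\( M \) enters—together with the identification of the localizing subcategory generated by~\( E \) (respectively by~\( M \)) with the full derived category.
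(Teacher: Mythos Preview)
The paper does not supply its own proof of this statement: it is quoted verbatim as a result of Keller and cited to \textcite[section~7.3]{keller}, with no argument given. So there is nothing in the paper to compare your proposal against.

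That said, your sketch is the standard argument and is essentially correct; it is also, in outline, what Keller does. A couple of small points worth tightening. First, when you say the subcategory on which~\( \eta \) is an isomorphism is closed under arbitrary direct sums, you are using not only compactness of~\( M \) over~\( A \) (so that~\( F \) preserves coproducts) but also that~\( \mathup{L} G \) preserves coproducts, which is automatic since it is a left adjoint; it is worth saying both. Second, in the counit step you want to know that~\( F \) applied to the cone of~\( \varepsilon_{N} \) vanishes, which uses that~\( F \) is exact (i.e.\ commutes with cones), not just that it reflects zero; this is fine since~\( F = \operatorname{Hom}_{A} (M ,\slot ) \) on h-projectives is a dg-functor. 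Finally, the upgrade from a triangulated equivalence to a quasi-equivalence of dg-categories is the place where one has to be a little careful: the cleanest phrasing is that~\( F \colon \categoryformat{QF} (A ) \to \categoryformat{$E $-dgmod} \) is a dg-functor which is quasi-fully-faithful (adjunction plus invertibility of~\( \eta \)) and whose essential image generates, hence after passing to~\( \categoryformat{QF} (E ) \) on the target one gets a quasi-equivalence. None of this changes the substance of your argument.
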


The generator condition can be omitted in the following way.
Slightly abusing notation we denote the full subcategory
in~\( \mathscr{D} (\categoryformat{$A $-dgmod} ) \) consisting of objects whose images belong to the right orthogonal to~\( M\in \categoryformat{$A $-dgmod} \)
by~\( M^{\perp} \).
Take the Drinfeld quotient dg-category~\(
	\mathscr{D} (\categoryformat{$A $-dgmod} )/{M^{\perp}}
\).

\begin{corollary}\label{res:pre_Koszul_duality}
	For a compact object~\( M\in \categoryformat{$A $-dgmod} \)  we have a quasi-equivalence of dg-categories
	\[
		\mathscr{D} (\categoryformat{$A $-dgmod} )/{M^{\perp}}
		\longisoto
		\mathscr{D} (\categoryformat{$E $-dgmod} )
	\]
	given by the same functor as in the previous theorem. 
\end{corollary}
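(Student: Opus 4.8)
I would deduce the corollary from the preceding theorem of \textcite{keller} by replacing the ambient category with the subcategory that $M$ generates, where $M$ becomes an honest compact generator. Write $\langle M\rangle \subseteq \mathscr{D}(\categoryformat{$A $-dgmod})$ for the localizing subcategory (triangulated and closed under arbitrary direct sums) generated by~$M$, and let $\mathscr{E}$ be the full dg-subcategory of the model $\categoryformat{QF}(A)$ on those objects whose image lies in $\langle M\rangle$, so that $H^{0}\mathscr{E} = \langle M\rangle$. The object $M$ is compact in $\mathscr{D}(\categoryformat{$A $-dgmod})$, hence a fortiori compact in $\mathscr{E}$, and it generates $\langle M\rangle$ by construction; thus $M$ is a compact \emph{generator} of $\mathscr{E}$. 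Keller's theorem in its general form (section~7.3 of \textcite{keller}, of which the displayed theorem is the special case $\mathscr{C} = \mathscr{D}(\categoryformat{$A $-dgmod})$) then applies to $\mathscr{E}$ and yields a quasi-equivalence $\mathscr{E} \longisoto \mathscr{D}(\categoryformat{$E $-dgmod})$, $N \mapsto \mathup{R}\!\operatorname{Hom}_{A}(M,N)$. Because $\mathscr{E}$ is a \emph{full} dg-subcategory, the endomorphism dg-algebra $E = \mathup{R}\!\operatorname{Hom}_{A}(M,M)^{\mathup{op}}$ is unchanged, so this $E$ agrees with the one in the statement.

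It remains to produce a quasi-equivalence $\mathscr{E} \longisoto \mathscr{D}(\categoryformat{$A $-dgmod})/{M^{\perp}}$, and I would realize it as the composite $\mathscr{E} \hookrightarrow \mathscr{D}(\categoryformat{$A $-dgmod}) \xrightarrow{q} \mathscr{D}(\categoryformat{$A $-dgmod})/{M^{\perp}}$, checking first that $q|_{\langle M\rangle}$ is an equivalence of homotopy categories. Since $M$ generates $\langle M\rangle$, its right orthogonal coincides with that of the whole subcategory, $M^{\perp} = \langle M\rangle^{\perp}$. As $M$ is compact, Neeman's localization theorem for compactly generated triangulated categories shows the inclusion $\langle M\rangle \hookrightarrow \mathscr{D}(\categoryformat{$A $-dgmod})$ has a right adjoint, giving for each $X$ a colocalization triangle $a(X) \to X \to b(X)$ with $a(X) \in \langle M\rangle$ and $b(X) \in M^{\perp}$. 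The functor $b$ is then left adjoint to the inclusion $M^{\perp} \hookrightarrow \mathscr{D}(\categoryformat{$A $-dgmod})$, so $q$ acquires a fully faithful left adjoint with essential image ${}^{\perp}(M^{\perp}) = \langle M\rangle$. In particular $qX \cong q\,a(X)$ shows $q|_{\langle M\rangle}$ is essentially surjective, and being (the inverse of) a fully faithful adjoint it is an equivalence onto $H^{0}(\mathscr{D}(\categoryformat{$A $-dgmod})/{M^{\perp}})$.

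To upgrade this to a quasi-equivalence of dg-categories I would compare mapping complexes. By the defining property of the Drinfeld quotient \parencite{drinfeld}, for $X,Y \in \mathscr{E}$ the complex $\operatorname{Hom}_{\mathscr{D}(\categoryformat{$A $-dgmod})/{M^{\perp}}}(qX,qY)$ computes the derived $\operatorname{Hom}$ in the Verdier localization. The fully faithful left adjoint of the previous paragraph, applied after all shifts, identifies the cohomology of this complex with that of the ambient $\operatorname{Hom}_{\mathscr{E}}(X,Y)$, since for $X \in \langle M\rangle = {}^{\perp}(M^{\perp})$ localization does not alter derived homs into objects of $\langle M\rangle$. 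Hence the composite is quasi-fully faithful, and together with the essential surjectivity above it is a quasi-equivalence. Composing with the Keller equivalence $\mathscr{E} \longisoto \mathscr{D}(\categoryformat{$E $-dgmod})$ gives the asserted functor $N \mapsto \mathup{R}\!\operatorname{Hom}_{A}(M,N)$.

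The main obstacle I anticipate is precisely this last compatibility: verifying that the Drinfeld dg-quotient is a faithful dg-enhancement of the triangulated Verdier quotient, and that its mapping complexes restrict correctly along $\mathscr{E}$. This hinges on combining the universal property of the Drinfeld quotient with Neeman's representability and localization machinery, and the careful bookkeeping identifying $M^{\perp}$ with the kernel of $q$ on the nose (rather than merely up to idempotent completion) is where the real work lies.
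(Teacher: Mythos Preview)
Your approach is correct and is precisely the route the paper has in mind: the paper states the corollary without proof and then, in the remark immediately following, identifies the Drinfeld quotient \(\mathscr{D}(\categoryformat{$A $-dgmod})/M^{\perp}\) with the subcategory \(\langle M\rangle\) generated by~\(M\), whereupon Keller's theorem applies directly. Your write-up supplies the details the paper omits---the Bousfield/Neeman colocalization triangle giving the semiorthogonal decomposition, and the compatibility of the Drinfeld quotient with the Verdier quotient---so there is no substantive difference in strategy, only in level of detail. One small point worth noting: the paper's remark speaks of the ``minimal pretriangulated subcategory'' containing~\(M\), whereas you (correctly, for an equivalence with all of \(\mathscr{D}(\categoryformat{$E $-dgmod})\)) take the localizing subcategory closed under arbitrary sums; your formulation is the one actually needed.
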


\begin{remark}\label{rem:recollement}
	It is convenient to identify the quotient dg-category
	\[
		\mathscr{D} (\categoryformat{$A $-dgmod} )/{M^{\perp}}
	\]
	with the subcategory in~\( \mathscr{D} (\categoryformat{$A $-dgmod} ) \)
	generated by~\( M \), i.e.~with the minimal
	pretriangulated subcategory in~\( \mathscr{D} (\categoryformat{$A $-dgmod} ) \)
	containing~\( M \)
	that has arbitrary small coproducts.
	Denote the latter by \( \langle M \rangle  \).
	The previous corollary means that
	\[
		\langle M \rangle 
		\longisoto
		\mathscr{D} (\categoryformat{$E $-dgmod} )
	\]
	for~\(
		E=\mathup{R} \!\operatorname{Hom}_{A} (M ,M )^{\mathup{op}}
	\).
\end{remark}

\section{Geometric setting}

We pass to our case of interest. Let~\( X \) be a regular Noetherian affine scheme over a field of characeristic zero, and let~\( \mathbb{K} = \mathscr{O}_{X} \) be its ring of regular functions.
Consider two finite rank vector bundles \( M \) and~\( N \)
on~\( X \) and a vector bundle map~\( \varphi  \colon M \to N^{*} \)
like in~\cref{sec:twisted_case}.
We obtain a pair of dg-algebras
\( A^{!} \) and~\( A \) and their
Koszul complex~\( K \)
which is a resolution of~\( S \).
In other words,
\begin{align*}
	A
		&= \operatorname{Heis}_{\mathscr{O}_{X}} (M[{1}] \oplus N[{-1}] ),
\\
	A^{!}
		&= \operatorname{Sym}_{\mathscr{O}_{X}} (N[{-1}] \to{M^{*} [{-2}]} ),
\\
	S
		&= \operatorname{Sym}_{\mathscr{O}_{X}} (N[{-1}] ).
\end{align*}
We work in the dg-category~\(
	\mathscr{C} = \categoryformat{$A^{!}$-dgmod}
\).
Recall from~\cref{res:K_is_quasi-free} that \( K \) is an h-projective and quasi-free resolution of~\( S \) in~\( \mathscr{C} \).

Denote the dg-algebra~\( \operatorname{Hom}_{A^{!}} (K ,K )^{\mathup{op}} \)
by~\( E \).

\begin{corollary}
	We have a quasi-equivalence of dg-categories
	\[
		\mathscr{D} (\categoryformat{$A^{!}$-dgmod} )/{S^{\perp}}
		\longisoto
		\mathscr{D} (\categoryformat{$E $-dgmod} ).
	\]
\end{corollary}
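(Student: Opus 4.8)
The plan is to verify the hypotheses of \cref{res:pre_Koszul_duality} for the specific object~\( M = K \) in the dg-category~\( \mathscr{C} = \categoryformat{$A^{!}$-dgmod} \), and then simply invoke that corollary. Indeed, the corollary states that for any compact object~\( M \) in~\( \categoryformat{$A $-dgmod} \), there is a quasi-equivalence~\( \mathscr{D}(\categoryformat{$A $-dgmod})/M^{\perp} \isoto \mathscr{D}(\categoryformat{$E $-dgmod}) \), where~\( E = \mathup{R}\!\operatorname{Hom}_{A}(M,M)^{\mathup{op}} \). Transcribing this with~\( A \) replaced by~\( A^{!} \) and~\( M \) replaced by~\( K \), what we must check is exactly: (i) that~\( K \) is a compact object of~\( \categoryformat{$A^{!}$-dgmod} \); and (ii) that the dg-algebra~\( E = \operatorname{Hom}_{A^{!}}(K,K)^{\mathup{op}} \) appearing in our statement genuinely computes the derived endomorphism algebra~\( \mathup{R}\!\operatorname{Hom}_{A^{!}}(K,K)^{\mathup{op}} \).

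Both points have essentially already been arranged. For compactness, I would cite \cref{res:K_is_quasi-free}, which records that~\( K \) is free of finite rank as a graded~\( A^{!} \)-module; such a module is a finite retract of free modules, so~\( \operatorname{Hom}_{A^{!}}(K,\slot) \) commutes with direct sums, giving compactness. For point (ii), the same remark notes that~\( K \) is h-projective in~\( \categoryformat{$A^{!}$-dgmod} \). H-projectivity is precisely the condition that the underived hom-complex computes the derived one, so the strict endomorphism dg-algebra~\( E = \operatorname{Hom}_{A^{!}}(K,K)^{\mathup{op}} \) agrees with~\( \mathup{R}\!\operatorname{Hom}_{A^{!}}(K,K)^{\mathup{op}} \). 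Since~\( K \) is a resolution of~\( S \) by \cref{res:K_is_a_resolution}, the right orthogonal~\( K^{\perp} \) coincides with~\( S^{\perp} \), which is why the statement is phrased in terms of~\( S^{\perp} \).

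With these two verifications in hand, the corollary applies verbatim and yields the asserted quasi-equivalence~\( \mathscr{D}(\categoryformat{$A^{!}$-dgmod})/S^{\perp} \isoto \mathscr{D}(\categoryformat{$E $-dgmod}) \). Thus the proof is a short matter of pattern-matching the hypotheses, and I would keep it to two or three sentences in the write-up, with the substantive content being the pointer to \cref{res:K_is_quasi-free} for the compactness and h-projectivity of~\( K \).

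Honestly, there is no real obstacle here: the corollary was set up precisely to be applied in this geometric setting, and all the hard work—constructing~\( K \), exhibiting the commuting~\( A \)- and~\( A^{!} \)-actions, and establishing its finiteness and h-projectivity properties—was done in \cref{res:K_is_a_resolution} and especially \cref{res:K_is_quasi-free}. The only subtlety worth a word of care is the identification~\( K^{\perp} = S^{\perp} \), which is immediate from~\( K \) being quasi-isomorphic to~\( S \) as an~\( A^{!} \)-module, since the right orthogonal depends only on the quasi-isomorphism class of the generating object in~\( \mathscr{D}(\categoryformat{$A^{!}$-dgmod}) \).
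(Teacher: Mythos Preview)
Your proposal is correct and matches the paper's approach exactly: the corollary is stated without proof because it is an immediate application of \cref{res:pre_Koszul_duality} to the compact, h-projective object~\( K \) (using the properties recorded in \cref{res:K_is_quasi-free}), together with the identification~\( K^{\perp} = S^{\perp} \) coming from~\( K \simeq S \) in~\( \mathscr{D}(\categoryformat{$A^{!}$-dgmod}) \).
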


\begin{remark}
	Following the usual geometric intuition, we denote the
	triangualted subcategory~\( \langle S \rangle  \) by~\(
		\mathscr{D} (\categoryformat{$A^{!}$-dgmod} )_{\categoryformat{tors}}
	\).
	This way, using~\cref{rem:recollement},
	the quasi-equivalence from~\cref{res:pre_Koszul_duality} reads as follows:
	\[
		\mathscr{D} (\categoryformat{$A^{!}$-dgmod} )_{\categoryformat{tors}}
		\longisoto
		\mathscr{D} (\categoryformat{$E $-dgmod} ).
	\]
\end{remark}

\begin{lemma}
	We have a quasi-isomorphism of dg-algebras~\(
		A
		\longisoto
		\operatorname{Hom}_{A^{!}} (K ,K )^{\mathup{op}}
	\).
\end{lemma}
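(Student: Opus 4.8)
The plan is to build the comparison map straight from the bimodule structure and then reduce the quasi-isomorphism claim to a computation of self-extensions of~\( S \). The map is essentially already constructed in the text: by \cref{res:K_is_a_resolution} the complex~\( K \) is an \( A \)--\( A^{!} \)-bimodule, and concretely its \( A \)-action is the restriction to~\( A \) of the embedding~\( A \into \operatorname{Heis}_{\mathscr{O}_{X}} (L \oplus L^{*} ) \to \operatorname{End}_{\mathscr{O}_{X}} (K ) \) of \cref{sec:twisted_case}, whose image centralizes that of~\( A^{!} \). These endomorphisms are therefore \( A^{!} \)-linear, and sending~\( a \) to its action gives a homomorphism of dg-algebras
\[
	\Phi \colon A \longto E = \operatorname{Hom}_{A^{!}} (K ,K )^{\mathup{op}}
\]
(the opposite accounting for the order of composition). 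That \( \Phi \) respects differentials, products, and units is built into this description, so the entire content of the lemma is that \( \Phi \) is a quasi-isomorphism of the underlying complexes.

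First I would pass to the module~\( S \). By \cref{res:K_is_quasi-free}, \( K \)~is h-projective over~\( A^{!} \), so \( \operatorname{Hom}_{A^{!}} (K ,\slot ) \)~preserves quasi-isomorphisms; since the augmentation \( K \to S \)~is a quasi-isomorphism by \cref{res:K_is_a_resolution}, post-composition with it yields a quasi-isomorphism
\[
	E = \operatorname{Hom}_{A^{!}} (K ,K )
	\longisoto
	\operatorname{Hom}_{A^{!}} (K ,S )
	,
\]
whence \( H^{\smallbullet} (E ) = \operatorname{Ext}_{A^{!}}^{\smallbullet} (S ,S ) \). It thus suffices to show that the composite \( A \to \operatorname{Hom}_{A^{!}} (K ,S ) \)~is a quasi-isomorphism. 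Writing \( K = A^{!} \mathbin{\otimes^{\tau }} A^{*} \) as in \cref{res:K_is_quasi-free} and invoking the twisting-cochain adjunction of the previous chapter, this target is identified with~\( \operatorname{Hom}^{\tau } (A^{*} ,S ) \), whose underlying graded object is~\( \operatorname{Hom}_{\mathscr{O}_{X}} (A^{*} ,S ) \).

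The remaining step is the twisted Koszul duality isomorphism, and the cleanest route is to realize \( \Phi \) as a \emph{filtered} quasi-isomorphism. I would filter by the number of tensor factors of~\( A^{*} \), exactly as in \cref{res:K_is_quasi-free,lemma:CobBar_H-Proj}: this is a bounded, exhaustive filtration by finitely many free \( A^{!} \)-modules, so the associated spectral sequence converges. On the associated graded the twisted differential~\( d_{\tau } \) is killed, and the whole computation collapses to the untwisted situation \( \varphi = 0 \) of the first section, namely the classical Koszul self-duality in which \( \operatorname{Sym}_{\mathscr{O}_{X}} (N[{-1}] ) \) has self-extension algebra \( \operatorname{Sym}_{\mathscr{O}_{X}} (M[{1}] \oplus N[{-1}] ) \) over the free graded-commutative algebra \( \operatorname{Sym}_{\mathscr{O}_{X}} (N[{-1}] ) \mathbin{\otimes} \operatorname{Sym}_{\mathscr{O}_{X}} (M^{*} [{-2}] ) \). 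There \( \operatorname{gr} \Phi \) is the standard identification and is a quasi-isomorphism; applying \cref{res:tensors_of_quasi-free,res:scalar_extension_of_quasi-free} to bookkeep the free modules, one concludes that \( \operatorname{gr} \Phi \)~is a quasi-isomorphism, and hence so is~\( \Phi \).

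The hard part will be this last step: verifying that \( \operatorname{gr} \Phi \)~genuinely is the classical Koszul duality isomorphism and that the spectral sequence degenerates onto~\( A \), rather than merely abutting to something of the same graded dimension. Two points demand care. The shifts must be matched so that the Heisenberg generators \( M[{1}] \) and~\( N[{-1}] \) land in the correct cohomological degrees of~\( \operatorname{Ext}_{A^{!}} (S ,S ) \). And the non-commutativity of~\( A \) is the real subtlety: its central Heisenberg term—the one encoding~\( \varphi \)—is invisible on the associated graded and must be recovered from the interaction of~\( d_{\tau } \) with the filtration one step deeper. Once these are settled, multiplicativity costs nothing, since \( \Phi \)~was a dg-algebra map from the start.
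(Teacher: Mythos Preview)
Your construction of~\( \Phi \) and the reduction to~\( \operatorname{Hom}_{A^{!}} (K ,S ) \) via h-projectivity of~\( K \) coincide with the paper's argument. Where you diverge is the endgame: you propose a filtered quasi-isomorphism and a spectral-sequence comparison with the untwisted case~\( \varphi = 0 \), whereas the paper simply observes that the adjunction identification~\(
	\operatorname{Hom}_{A^{!}} (K ,S )
	=
	\operatorname{Hom}^{\tau } (A^{*} ,S )
\)
is \emph{equal} to~\( A \) as a complex, and then checks by hand that the composite~\(
	A \to \operatorname{Hom}_{A^{!}} (K ,K )^{\mathup{op}} \to A
\)
is the identity; the two-out-of-three property finishes. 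Your route is correct---the filtration by tensor-length in~\( A^{*} \) kills~\( d^{\tau } \), the associated graded of~\( A \) under the PBW-type filtration is~\( \operatorname{Sym} (M [{1}] ) \otimes S \), and the graded pieces match---but the worries you flag (recovering the Heisenberg cocycle, matching shifts, convergence) evaporate once you notice that both sides already carry zero differential and the composite is literally the identity on underlying graded modules. The paper's shortcut buys you exactly the avoidance of that bookkeeping; your approach, on the other hand, would generalize more readily to situations where no such direct identification is available.
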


\begin{proof}
	We have a map
	\[
		A
		\longto
		\operatorname{Hom}_{S} (A^{*},A^{*} )^{\mathup{op}}
		\longto
		\operatorname{Hom}_{A^{!}} (A^{!} \mathbin{\otimes_{S}^{\tau }} A^{*},A^{!} \mathbin{\otimes_{S}^{\tau }} A^{*} )^{\mathup{op}}
	,\]
	which also shows that
	\(
		\operatorname{Hom}_{S} (K ,K )^{\mathup{op}}
	\)~is a dg-algebra over~\( S \).
	To see that
	this map
	is a quasi-isomorphism, we observe that the fact that
	\( K \)~is a quasi-free resolution of~\( S \)
	(see~\cref{res:K_is_a_resolution,res:K_is_quasi-free})
	implies that
	\smash{\(
		\operatorname{Hom}_{A^{!}}^{\smallbullet} (K ,K )^{\mathup{op}}
	\)}~calculates~\smash{\(
		\mathup{R} \!\operatorname{Hom}_{A^{!}} (S ,S )^{\mathup{op}}
	\)}.
	Therefore, the augmentation~\(
		A^{!} \to S
	\)
	induces a quasi-isomorphism
	\[
		\operatorname{Hom}_{A^{!}}^{\smallbullet} (K ,K )^{\mathup{op}}
		\longisoto
		\operatorname{Hom}_{A^{!}}^{\smallbullet} (K ,S )
		=
		\operatorname{Hom}_{S}^{\tau } (A^{*},S )
		=
		A
		.
	\]
	One checks directly that
	the composition~\smash{\(
		A
		\to
		\operatorname{Hom}_{A^{!}}^{\smallbullet} (K ,K )^{\mathup{op}}
		\to
		A
	\)}
	is the identity.
	By the two-out-of-three property, the first map
	is a quasi-isomorphism.
	Therefore, as an algebra, \(
		\operatorname{Hom}_{A^{!}}^{\smallbullet} (K ,K )^{\mathup{op}}
	\)~is quasi-isomorphic to~\( A \).
\end{proof}

We proved the following statement:

\begin{proposition}\label{res:Koszul_duality}
	We have a quasi-equivalence of dg-categories
	\[
		\mathscr{D} (\categoryformat{$A^{!}$-dgmod} )_{\categoryformat{tors}}
		\longisoto
		\mathscr{D} (\categoryformat{$A $-dgmod} ) .
	\]
\end{proposition}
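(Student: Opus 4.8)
The plan is to read off the proposition by composing the two quasi-equivalences that the preceding results have already set up, so the argument is essentially one of assembly. First I would combine the geometric corollary above with the identification from \cref{rem:recollement}: the latter rewrites the Drinfeld quotient $\mathscr{D}(\categoryformat{$A^{!}$-dgmod})/S^{\perp}$ as the subcategory $\langle S\rangle = \mathscr{D}(\categoryformat{$A^{!}$-dgmod})_{\categoryformat{tors}}$ generated by $S$, so that Keller's theorem — applied via \cref{res:pre_Koszul_duality} to the compact, h-projective resolution $K$ of $S$ furnished by \cref{res:K_is_a_resolution,res:K_is_quasi-free} — yields
\[
	\mathscr{D}(\categoryformat{$A^{!}$-dgmod})_{\categoryformat{tors}}
	\longisoto
	\mathscr{D}(\categoryformat{$E$-dgmod}),
	\qquad
	E = \operatorname{Hom}_{A^{!}}(K,K)^{\mathup{op}}.
\]

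Second, I would invoke the immediately preceding lemma, which produces a quasi-isomorphism of dg-algebras $A \longisoto E$. The only remaining ingredient is that such a quasi-isomorphism induces a quasi-equivalence $\mathscr{D}(\categoryformat{$E$-dgmod}) \cong \mathscr{D}(\categoryformat{$A$-dgmod})$. This is the standard derived invariance of the dg-derived category: restriction of scalars along $A \to E$ and its derived extension $E \mathbin{\otimes_{A}^{\mathup{L}}}(\slot)$ form an adjunction whose unit and counit are quasi-isomorphisms precisely because $A \to E$ is one, so the induced functors are mutually inverse quasi-equivalences. Composing the two displays gives
\[
	\mathscr{D}(\categoryformat{$A^{!}$-dgmod})_{\categoryformat{tors}}
	\longisoto
	\mathscr{D}(\categoryformat{$E$-dgmod})
	\cong
	\mathscr{D}(\categoryformat{$A$-dgmod}),
\]
which is exactly the assertion.

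Since all the analytic content was carried by the earlier results — the construction of $K$, its compactness and h-projectivity over $A^{!}$, and the computation $E \simeq A$ — the present argument is formal bookkeeping rather than genuine work. The one point that deserves care is the derived-invariance step: one must confirm that the map $A \to E$, which is not an isomorphism of dg-algebras on the nose, nevertheless descends to an equivalence of \emph{derived} categories. Working throughout with the h-projective (quasi-free) models, as the earlier sections are careful to do, guarantees that the $\operatorname{Hom}$-complexes already compute derived homs, so no further resolution is needed and the comparison is clean.
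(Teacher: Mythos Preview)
Your proposal is correct and follows essentially the same route as the paper: the proposition is stated immediately after the lemma computing~\( E \simeq A \) with the phrase ``We proved the following statement,'' so the paper's proof is precisely the assembly of \cref{res:pre_Koszul_duality}, \cref{rem:recollement}, and the preceding lemma that you describe. Your additional remark on derived invariance under a quasi-isomorphism of dg-algebras makes explicit a step the paper leaves implicit, but the argument is the same.
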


\endgroup

\chapter{Homotopy limits}

For a general introduction to the homotopy limit machinery we shall use, we refer 
the reader to \textcite{ends}.
To us, a homotopy limit will be always understood as the derived functor of the limit functor in the model category-theoretic sense, and it serves as a concrete realization of the \( \infty \)-categorical limit functor. In particular, the homotopy limit preserves weak equivalences.

\section{General lemmata on homotopy limits}

The following section adds a few technical lemmata to \textcite{ends} which we shall need.

\begin{lemma}\label{res:fubini_holim}
	Let~\( \mathscr{C} \) be a combinatorial model category
	and~\( \Gamma_{1}, \Gamma_{2} \) two categories.
	The homotopy limit
	\(
		\invlimformat{holim}_{\Gamma_{1} \times \Gamma_{2}} F 
	\)
	may be calculated componentwise, i.e.
	\[
		\invlimformat{holim}_{(\gamma_{1},\gamma_{2} ) \in \Gamma_{1} \times \Gamma_{2}} F(\gamma_{1},\gamma_{2} )
		=
		\invlimformat{holim}_{\gamma_{1} \in \Gamma_{1}} \bigl(\invlimformat{holim}_{\gamma_{2} \in \Gamma_{2}} F(\gamma_{1},\gamma_{2} ) \bigr)
	\]
	where both homotopy limits on the right-hand side are
	derived functors of the
	pointwise limit~\( \smash{
		\invlimformat{lim}_{\Gamma_{i}}
		\colon
		\mathscr{C}^{\Gamma_{i}}
		\to
		\mathscr{C}
	} \).
\end{lemma}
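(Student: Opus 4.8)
The plan is to deduce the homotopy-invariant statement from the strict Fubini theorem for ordinary limits, using that homotopy limits are total right-derived functors and that such functors compose. Write \( p\colon\Gamma_1\times\Gamma_2\to\Gamma_1 \) for the projection and \( t\colon\Gamma_1\to\ast \) for the terminal functor. By pseudofunctoriality of right Kan extensions one has a canonical factorization of \( \lim_{\Gamma_1\times\Gamma_2} \) as \( t_\ast\circ p_\ast \), where \( p_\ast\colon\mathscr{C}^{\Gamma_1\times\Gamma_2}\to\mathscr{C}^{\Gamma_1} \) is the right Kan extension along \( p \) and \( t_\ast=\lim_{\Gamma_1} \). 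Since the fibre of \( p \) over an object \( \gamma_1 \) is \( \{\gamma_1\}\times\Gamma_2\cong\Gamma_2 \), the functor \( p_\ast \) is exactly \enquote{take the limit over \( \Gamma_2 \) with \( \gamma_1 \) as a parameter}. Thus the right-hand side of the asserted identity is precisely the composite \( \mathbb{R}t_\ast\circ\mathbb{R}p_\ast \), and it suffices to prove \( \mathbb{R}(t_\ast\circ p_\ast)\cong\mathbb{R}t_\ast\circ\mathbb{R}p_\ast \).

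First I would endow each of \( \mathscr{C}^{\Gamma_1\times\Gamma_2} \), \( \mathscr{C}^{\Gamma_1} \) and \( \mathscr{C}^{\Gamma_2} \) with its injective model structure, in which cofibrations and weak equivalences are detected objectwise; this is where combinatoriality of \( \mathscr{C} \) is used, as it guarantees that these injective structures exist. The functors \( p_\ast \) and \( t_\ast \) are right adjoint to the restriction functor \( p^\ast \) and the constant-diagram functor \( \Delta \) respectively. Both \( p^\ast \) and \( \Delta \) are computed objectwise by repeating values, so they send objectwise cofibrations to objectwise cofibrations and objectwise weak equivalences to objectwise weak equivalences; hence they are left Quillen for the injective structures, and \( p_\ast \), \( t_\ast \) are right Quillen. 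Their composite \( t_\ast\circ p_\ast \), which is canonically isomorphic to \( \lim_{\Gamma_1\times\Gamma_2} \), is therefore right Quillen as well.

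It then remains to invoke the standard fact that the right-derived functor of a composite of right Quillen functors is the composite of their right-derived functors. Concretely, to compute \( \mathbb{R}(t_\ast p_\ast)F \) one chooses an injectively fibrant replacement \( F\to F' \) in \( \mathscr{C}^{\Gamma_1\times\Gamma_2} \) and applies \( t_\ast p_\ast \); because the right Quillen functor \( p_\ast \) preserves fibrant objects, \( p_\ast F' \) both computes \( \mathbb{R}p_\ast F \) and is already injectively fibrant in \( \mathscr{C}^{\Gamma_1} \), so applying \( t_\ast \) to it computes \( \mathbb{R}t_\ast(\mathbb{R}p_\ast F) \) with no further replacement needed. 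This yields the desired equivalence.

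I expect the main thing to verify carefully to be the preservation of injective fibrancy by \( p_\ast \), which is exactly what makes the composition-of-derived-functors lemma applicable; everything else is formal once the injective model structures are available. One additional point worth recording is that, under these identifications, \( \mathbb{R}p_\ast \) agrees objectwise in \( \gamma_1 \) with the homotopy limit \( \mathbb{R}\lim_{\Gamma_2} \) of \textcite{ends}: this follows from the coinitiality of the fibre inclusion \( \{\gamma_1\}\times\Gamma_2\hookrightarrow(\gamma_1\downarrow p) \), which is moreover homotopy-coinitial, so that the derived right Kan extension is computed fibrewise. With this, the right-hand side is literally the iterated homotopy limit appearing in the statement.
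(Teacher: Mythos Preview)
Your argument is correct and runs along the same rails as the paper's: endow the diagram categories with injective model structures (using combinatoriality), take a single injectively fibrant replacement of~\(F\), and observe that the intermediate \(\Gamma_2\)-limit is already fibrant so that no further replacement is needed before taking~\(\lim_{\Gamma_1}\).

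The packaging differs in one place. You phrase the intermediate functor as the right Kan extension~\(p_\ast\) along the projection and invoke the composition lemma \(\mathbb{R}(t_\ast p_\ast)\cong\mathbb{R}t_\ast\circ\mathbb{R}p_\ast\) for right Quillen functors, then use homotopy-initiality of the fibre inclusion \(\{\gamma_1\}\times\Gamma_2\hookrightarrow(\gamma_1\downarrow p)\) to identify \((\mathbb{R}p_\ast F)(\gamma_1)\) with \(\invlimformat{holim}_{\Gamma_2}F(\gamma_1,\slot)\). The paper instead observes directly that the injective model structures iterate, \(\mathscr{C}_{\mathup{inj}}^{\Gamma_1\times\Gamma_2}=(\mathscr{C}_{\mathup{inj}}^{\Gamma_2})_{\mathup{inj}}^{\Gamma_1}\), so that the fibrant replacement~\(R(F)\), when evaluated at any~\(\gamma_1\), is already injectively fibrant in~\(\mathscr{C}^{\Gamma_2}\); this makes \(\lim_{\Gamma_2}R(F)(\gamma_1,\slot)\) compute the inner homotopy limit without appealing to a pointwise formula for homotopy Kan extensions. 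The paper then cites \textcite[Remark~A.2.8.6]{htt} for the remaining step that the pointwise \(\Gamma_2\)-limit is right Quillen between the injective diagram categories. Your route is slightly more abstract and portable (it works verbatim for any factorization through a right Quillen functor), while the paper's is a touch more hands-on and avoids the cofinality detour; neither requires anything the other does not already implicitly use.
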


\begin{proof}
	Recall from \textcite{ends}
	that the limit functor
	\[
		\invlimformat{lim}_{\Gamma_{1} \times \Gamma_{2}}
		\colon
		\mathscr{C}_{\mathup{inj}}^{\Gamma_{1} \times \Gamma_{2}}
		\to
		\mathscr{C}
	\]
	is right Quillen,
	where~\(
		\smash{\mathscr{C}}_{\mathup{inj}}^{\Gamma_{1} \times \Gamma_{2}}
	\)
	is the category of diagrams~\(
		\Gamma_{1}\times\Gamma_{2}\to\mathscr{C}
	\)
	equipped with the injective model structure. Hence
	we have
	\[
		\invlimformat{holim}_{\Gamma_{1} \times \Gamma_{2}} F 
		=
		\invlimformat{lim}_{\Gamma_{1} \times \Gamma_{2}} R(F )
		=
		\invlimformat{lim}_{\gamma_{1} \in \Gamma_{1}} \bigl(\invlimformat{lim}_{\gamma_{2} \in \Gamma_{2}} R(F )(\gamma_{1},\gamma_{2} ) \bigr)
	\]
	where \( R(F ) \)~is a fibrant replacement
	of~\( F \) in~\( 
		\smash{\mathscr{C}}_{\mathup{inj}}^{\Gamma_{1} \times \Gamma_{2}}
	\).
	Now~\(
		\smash{\mathscr{C}}_{\mathup{inj}}^{\Gamma_{1} \times \Gamma_{2}}
		=
		\smash{\smash{(\mathscr{C}_{\mathup{inj}}^{\Gamma_{2}} )}}_{\mathup{inj}}^{\Gamma_{1}}
	\)
	(since they clearly have the same cofibrations and trivial cofibrations),
	so \( R(F )(\gamma_{1},\slot ) \)~is fibrant
	in~\( \smash{\mathscr{C}}_{\mathup{inj}}^{\Gamma_{2}} \)
	for all~\( \gamma_{1}\in\Gamma_{1} \).
	This shows that
	\(
		\smash{\invlimformat{lim}_{\gamma_{2} \in \Gamma_{2}} R(F )(\gamma_{1},\gamma_{2} )}
	\)
	calculates~\(
		\smash{\invlimformat{holim}_{\gamma_{2} \in \Gamma_{2}} F(\gamma_{1},\gamma_{2} )}
	\)
	for all~\( \gamma_{1}\in\Gamma_{1} \).
	Furthermore,
	the limit functor
	\[
		\invlimformat{lim}_{\Gamma_{2}}
		\colon
		\smash{\smash{(\mathscr{C}_{\mathup{inj}}^{\Gamma_{2}} )}}_{\mathup{inj}}^{\Gamma_{1}}
		\to
		\smash{\mathscr{C}}_{\mathup{inj}}^{\Gamma_{1}}
	\]
	is right Quillen by~\textcite[Remark A.2.8.6]{htt}.
	Thus~\(
		\smash{\invlimformat{lim}_{\gamma_{2} \in \Gamma_{2}} R(F )(\slot ,\gamma_{2} )}
	\)~is in fact
	a fibrant realization of
	the object~\(
		\smash{\invlimformat{holim}_{\gamma_{2} \in \Gamma_{2}} F(\slot ,\gamma_{2} )}
	\)
	in~\( \smash{\mathscr{C}}_{\mathup{inj}}^{\Gamma_{1}} \).
	Therefore,
	\[
		\invlimformat{lim}_{\gamma_{1} \in \Gamma_{1}} \bigl(\invlimformat{lim}_{\gamma_{2} \in \Gamma_{2}} R(F )(\gamma_{1},\gamma_{2} ) \bigr)
		=
		\invlimformat{holim}_{\gamma_{1} \in \Gamma_{1}} \bigl(\invlimformat{holim}_{\gamma_{2} \in \Gamma_{2}} F(\gamma_{1},\gamma_{2} ) \bigr)
	\]
	as claimed.
\end{proof}

\begin{lemma}\label{res:delta_sifted}
	The category~\( \mathbf{\Delta} \) is sifted, i.e.\ 
	the diagonal embedding~\( \mathbf{\Delta}\to\mathbf{\Delta}\times\mathbf{\Delta} \)
	is homotopy-initial. In other words,
	\[
		\invlimformat{holim}_{\mathopen[n\mathclose] \in \mathbf{\Delta}} F(\mathopen[n\mathclose],\mathopen[n\mathclose] )
		=
		\invlimformat{holim}_{(\mathopen[n\mathclose],\mathopen[m\mathclose] ) \in \mathbf{\Delta} \times \mathbf{\Delta}} F(\mathopen[n\mathclose],\mathopen[m\mathclose] ).
	\]
\end{lemma}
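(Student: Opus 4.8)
The plan is to check that the diagonal $\delta\colon\mathbf{\Delta}\to\mathbf{\Delta}\times\mathbf{\Delta}$ is homotopy-initial by verifying the standard comma-category criterion for homotopy \emph{limits}: a functor $u\colon\mathscr{D}\to\mathscr{C}$ has the property that the homotopy limit over~$\mathscr{C}$ agrees with the homotopy limit over~$\mathscr{D}$ of the restricted diagram, for every diagram~$F$, as soon as, for each~$c\in\mathscr{C}$, the nerve of the comma category $u\mathbin{\downarrow}c$ --- whose objects are pairs $(d,u(d)\to c)$ and whose morphisms are maps $d\to d'$ over~$c$ --- is weakly contractible. (For limits it is this over-category, not the under-category $c\mathbin{\downarrow}u$, that must be used.) Thus it suffices to show that for every pair $([p],[q])$ the comma category $\delta\mathbin{\downarrow}([p],[q])$ has weakly contractible nerve.

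First I would identify this comma category. An object is an $[n]\in\mathbf{\Delta}$ together with a pair of maps $[n]\to[p]$ and $[n]\to[q]$, and a morphism is a map $\alpha\colon[n]\to[n']$ commuting with both structure maps. Since $N[p]_n=\operatorname{Hom}_{\mathbf{\Delta}}([n],[p])$ and the nerve functor preserves products, such data is exactly an $n$-simplex of $N([p]\times[q])=N[p]\times N[q]$, with the morphisms being those of the category of simplices. Hence $\delta\mathbin{\downarrow}([p],[q])$ is canonically isomorphic to the category of elements (category of simplices) of the simplicial set $N([p]\times[q])$, where $[p]\times[q]$ denotes the product poset.

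It then remains to see that this category of elements has contractible nerve, and here I would use that $[p]\times[q]$ has a greatest element $(p,q)$. Concretely, appending $(p,q)$ as a last vertex defines an endofunctor $T$ on the category of elements, sending a simplex $\sigma\colon[n]\to[p]\times[q]$ to $\sigma^{+}\colon[n+1]\to[p]\times[q]$ with $\sigma^{+}|_{[n]}=\sigma$ and $\sigma^{+}(n+1)=(p,q)$. The last-vertex inclusions $[n]\hookrightarrow[n+1]$ assemble into a natural transformation $\mathup{id}\Rightarrow T$, and the maps picking out the final vertex assemble into a natural transformation $\mathup{const}\Rightarrow T$ from the constant functor at the object $([0],(p,q))$. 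Natural transformations induce homotopies on classifying spaces, so $\mathup{id}\simeq T\simeq\mathup{const}$ and the nerve is contractible. (Equivalently, one may invoke the classical fact that the nerve of the category of simplices of a simplicial set~$S$ realizes to $|S|$, together with $|N([p]\times[q])|\simeq\ast$.) This gives contractibility of every $\delta\mathbin{\downarrow}([p],[q])$, hence the homotopy-initiality, which is precisely the displayed identity.

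The main thing to be careful about is not any calculation but the bookkeeping of variance: one must use the correct cofinality criterion for homotopy limits (the over-category $\delta\mathbin{\downarrow}([p],[q])$ rather than the under-category) and match it precisely with the standard convention for the category of elements of a presheaf. Once that identification is pinned down, contractibility is immediate from the terminal object of the product poset, and no further analysis of~$\mathbf{\Delta}$ is needed.
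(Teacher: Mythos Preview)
Your argument is correct: the identification of \(\delta\mathbin{\downarrow}([p],[q])\) with the category of simplices of \(N([p]\times[q])\) is precise, and the cone construction via the terminal element \((p,q)\) is a clean way to obtain contractibility. The bookkeeping of variance (over-category rather than under-category for limits) is handled correctly.

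The paper takes a much shorter route: it simply cites \textcite[Example~21.5]{dugger_homotopy_limits} for the siftedness statement and a reference for the fact that homotopy-initial functors induce equivalences of homotopy limits, without reproducing any argument. So your proof is not the paper's approach but rather an unpacking of what lies behind those citations. What you gain is a self-contained verification that makes transparent exactly which feature of~\(\mathbf{\Delta}\) is being used (namely, that every \([p]\times[q]\) has a terminal element, so the comma categories admit an explicit contraction); what the paper gains is brevity, treating the result as standard. Either is appropriate here.
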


\begin{proof}
	The statement that~\( \mathbf{\Delta} \) is sifted is proved
	in~\textcite[Example~21.5]{dugger_homotopy_limits}.
	For a proof that homotopy-initial functors
	preserve homotopy limits,
	see e.g.~\textcite[Theorem~6.1]{ends}.
\end{proof}

\begin{example}\label{ex:semidirect_quotient}
	Let~\( G \)
	be a group scheme which is the semidirect product~\( G = N \rtimes H \)
	of a normal subgroup~\( N \) and a subgroup~\( H \),
	and let it act on a scheme~\( X \).
	We claim that there is an isomorphism of stack quotients
	\[
		\mathopen[\mathopen[X/{N} \mathclose]/{H} \mathclose]
		\cong
		\mathopen[X/{G} \mathclose]
		.
	\]
	To see this, write \(
		\varphi_{g} \colon N \to N
	\)
	for the action of~\( g \in G \) on~\( N \).
	Now notice that we have an isomorphism
	\[
		\mathopen[\mathopen[X/{N} \mathclose]_{n}/{H} \mathclose]_{n}
		\isoto
		\mathopen[X/{G} \mathclose]_{n}
	\]
	given by
	\[
		\bigl(h_{1},\dots ,h_{n},(n_{1},\dots ,n_{n},x ) \bigr)
		\longmapsto
		\bigl(\varphi_{h_{1} \cdots h_{n}} (n_{1} ) h_{1},\varphi_{h_{2} \cdots h_{n}} (n_{2} ) h_{2},\ldots ,\varphi_{h_{n}} (n_{n} ) h_{n},x \bigr).
	\]
	Taking colimits on both sides and applying the two lemmas yields the desired result.
\end{example}

\begingroup

\section{Homotopy limits in dg-categories}

We let~\( \categoryformat{dgSch} (k ) \) be the category of \textdef{dg-schemes} over a field~\( k \),
which for us is the opposite category of the
category~\(
	\categoryformat{dgAlg}^{\le 0} (k )
\)
of graded commutative
dg-algebras over~\( k \) sitting in non-positive degree.
If~\( X \in \categoryformat{dgSch} (k ) \), we denote by~\( A_{X} \in \categoryformat{dgAlg}^{\le 0} (k ) \)
the associated dg-algebra and call~\(
	X^{\circ}
	=
	\operatorname{Spec} (H^{0} A_{X} )
\)
the \textdef{underlying scheme} of~\( X \).
We shall often write~\( X = (X^{\circ},A_{X} ) \)
and think of~\( A_{X} \) as a dg-algebra over~\(
	\mathscr{O}_{X^{\circ}}
\).
Notice that a morphism~\(
	f
	\colon
	(X^{\circ},A_{X} )
	\to
	(Y^{\circ},A_{Y} )
\)
of affine dg-schemes is equivalent to the data of a morphism
of schemes~\(
	f
	\colon
	X^{\circ}
	\to
	Y^{\circ}
\)
and a comorphism~\(
	f^{\#}
	\colon
	A_{Y}
	\to
	A_{X}
\)
of \( \mathscr{O}_{Y^{\circ}} \)-dg-algebras.
The dg-category of \textdef{quasi-coherent sheaves} on the
dg-scheme~\( (X^{\circ},A_{X} ) \) is
just~\( \categoryformat{QCoh} (X ) = \categoryformat{$A_{X}$-dgmod} \), the category
of dg-modules over~\( A_{X} \).

An (affine) \textdef{group dg-scheme}
is a group object~\( G = (G^{\circ},A_{G} ) \)
in the category of affine dg-schemes.
This means that the underlying scheme~\( G^{\circ} \)
is an affine group scheme, and that the
comorphism~\(
	A_{G}
	\to
	A_{G}
	\mathbin{\otimes_{\mathscr{O}_{G}}}
	A_{G}
\)
of the composition map equips~\( A_{G} \) with the structure
of a Hopf dg-algebra over~\( \mathscr{O}_{G^{\circ}} \).

	Suppose that \( G \) is a group dg-scheme acting on a
	dg-scheme~\( X \).
	Then the category of~\( G \)-equivariant
	sheaves on~\( X \) is defined to be the
	homotopy limit
	\[
		\categoryformat{QCoh} (X )^{G}
		=
		\invlimformat{holim}_{\mathopen[n \mathclose] \in \mathbf{\Delta}} \categoryformat{QCoh} (X_{n} )
	\]
	taken in the category~\( \categoryformat{dgCat} (k ) \) of dg-categories over~\( k \)
	equipped with Tabuada's model structure
	\parencite[see]{tabuada},
	and where \( X_{\smallbullet} \)~is the classifying space of the action
	groupoid~\( G \times X \rightrightarrows X \).
	Similarly, define
	\[
		\smash{\bigl(\mathscr{D} \categoryformat{QCoh} (X ) \bigr)}^{G}
		=
		\invlimformat{holim}_{\mathopen[n \mathclose] \in \mathbf{\Delta}} \mathscr{D} \categoryformat{QCoh} (X_{n} ) .
	\]
	
	Suppose that \( N \) and~\( H \)~are group dg-schemes
	such that \( N \)~acts on~\( H \) by means
	of automorphisms. Then we can form the external semidirect
	product~\( N \rtimes H \) whose underlying
	dg-scheme is~\( N \times H \), and with group
	structure defined by the usual formula.
	We also call a group dg-scheme~\( G \)
	a semidirect product of group dg-subschemes~\( N \) and~\( H \)
	if it is isomorphic to the external semidirect product,
	and we write~\( G = N \rtimes H \).
	This is equivalent to having a short exact sequence of group
	dg-schemes~\(
		1 \to N \to G \to H \to 1
	\)
	such that there exists a monomorphism~\( H \into G \)
	with~\( H \into G \to H \) being the identity.
	Applying~\( H^{0} \), we get that \(
		H^{0} (A_{H} ) \to H^{0} (A_{G} ) \to H^{0} (A_{H} )
	\)~is also the identity,
	so \( H^{0} (A_{H} ) \to H^{0} (A_{G} ) \)~is injective,
	which implies that we also have a short exact sequence~\(
		1 \to N^{\circ} \to G^{\circ} \to H^{\circ} \to 1
	\)
	of the underlying group schemes.
	In particular, the underlying group scheme of~\( G \) is a semidirect product~\(
		G^{\circ} = N^{\circ} \rtimes H^{\circ}
	\).

	The same proof as in the classical case shows that we have an isomorphism
	of dg-schemes~\( G \cong N \times H \).
	This yields an isomorphism~\( A_{G} \cong A_{N} \mathbin{\otimes} A_{H} \)
	of dg-algebras over the underlying field.
	Therefore, if \( G \)~is acting on a
	dg-scheme~\( X \),
	we obtain similarly to~\cref{ex:semidirect_quotient}
	an isomorphism~\(
		\smash{A}_{G}^{\otimes n}
		\mathbin{\otimes}
		A_{X}
		\cong
		\smash{A}_{N}^{\otimes n}
		\mathbin{\otimes}
		\smash{A}_{H}^{\otimes n}
		\mathbin{\otimes}
		A_{X}
	\).
	Applying~\( \categoryformat{QCoh} \) and taking homotopy limits, we obtain
	from \cref{res:fubini_holim,res:delta_sifted}
	that
\begin{proposition}\label{ex:semidirect_equivariant_sheaves}
	If a group dg-scheme~\( G = N \rtimes H \)
	acts on an affine dg-scheme~\( X \), we have
	\[
		\categoryformat{QCoh} (X )^{G}
		\cong
		\smash{\bigl(\categoryformat{QCoh} (X )^{N} \bigr)}^{H} ,
	\]
	i.e.~you can impose \( G \)-equivariance by first imposing~\( N \)-equivariance and then \( H \)-equivariance.
	Similarly, we have for derived categories that
	\[
		\smash{\bigl(\mathscr{D} \categoryformat{QCoh} (X ) \bigr)}^{G}
		\cong
		\smash{\bigl(\smash{\bigl(\mathscr{D} \categoryformat{QCoh} (X ) \bigr)}^{N} \bigr)}^{H} .
	\]
\end{proposition}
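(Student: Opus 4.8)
The plan is to realize both sides as homotopy limits of one and the same bisimplicial diagram of dg-categories, and then to pass between the two presentations via \cref{res:fubini_holim} and \cref{res:delta_sifted}. Concretely, I would introduce the functor
\[
	F \colon \mathbf{\Delta} \times \mathbf{\Delta} \longto \categoryformat{dgCat} (k ),
	\qquad
	F(\mathopen[m\mathclose],\mathopen[n\mathclose]) = \categoryformat{QCoh} \bigl(A_{N}^{\otimes n} \otimes A_{H}^{\otimes m} \otimes A_{X} \bigr),
\]
which geometrically is the dg-category of quasi-coherent sheaves on the \( m \)-simplices of~\( H \) acting on the \( n \)-simplices of~\( N \) acting on~\( X \); the two simplicial directions record the bar constructions for~\( H \) and for~\( N \) separately.

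First I would unwind the definition of iterated equivariance. For each fixed level~\( \mathopen[m\mathclose] \) of the \( H \)-nerve, the inner homotopy limit~\( \invlimformat{holim}_{\mathopen[n\mathclose]} F(\mathopen[m\mathclose],\mathopen[n\mathclose]) \) imposes \( N \)-equivariance, and wrapping the outer homotopy limit over~\( \mathopen[m\mathclose] \) around it imposes the remaining \( H \)-equivariance, so that
\[
	\bigl(\categoryformat{QCoh} (X )^{N} \bigr)^{H}
	=
	\invlimformat{holim}_{\mathopen[m\mathclose] \in \mathbf{\Delta}} \invlimformat{holim}_{\mathopen[n\mathclose] \in \mathbf{\Delta}} F(\mathopen[m\mathclose],\mathopen[n\mathclose]).
\]
By \cref{res:fubini_holim} this iterated limit equals the single homotopy limit~\( \invlimformat{holim}_{(\mathopen[m\mathclose],\mathopen[n\mathclose]) \in \mathbf{\Delta} \times \mathbf{\Delta}} F(\mathopen[m\mathclose],\mathopen[n\mathclose]) \), and by \cref{res:delta_sifted} the latter is computed on the diagonal as~\( \invlimformat{holim}_{\mathopen[n\mathclose] \in \mathbf{\Delta}} F(\mathopen[n\mathclose],\mathopen[n\mathclose]) \). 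Finally, the isomorphism~\( A_{N}^{\otimes n} \otimes A_{H}^{\otimes n} \otimes A_{X} \cong A_{G}^{\otimes n} \otimes A_{X} = A_{X_{n}} \) recorded above identifies~\( F(\mathopen[n\mathclose],\mathopen[n\mathclose]) \) with~\( \categoryformat{QCoh} (X_{n} ) \), whence~\( \invlimformat{holim}_{\mathopen[n\mathclose]} F(\mathopen[n\mathclose],\mathopen[n\mathclose]) = \categoryformat{QCoh} (X )^{G} \). Chaining these identifications gives the first claim, and the derived statement is obtained verbatim upon replacing~\( \categoryformat{QCoh} \) by~\( \mathscr{D} \categoryformat{QCoh} \) throughout.

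The step I expect to be the main obstacle is the simplicial bookkeeping that turns the pointwise isomorphisms~\( A_{X_{n}} \cong A_{N}^{\otimes n} \otimes A_{H}^{\otimes n} \otimes A_{X} \) into an honest identification of the restriction of~\( F \) to the diagonal with the nerve~\( X_{\smallbullet} \) of the action groupoid of~\( G \). This is precisely the dg-scheme analogue of the explicit vertex formula appearing in \cref{ex:semidirect_quotient}: one must verify that, under the semidirect-product isomorphism~\( G \cong N \times H \), the face and degeneracy maps of~\( X_{\smallbullet} \) decompose as products of the corresponding maps in the two separate bar constructions, with the twist coming from the action of~\( H \) on~\( N \) matching the conjugation formula used there. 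Once this naturality is in place, everything else is the formal invocation of the two lemmata, exactly as in the colimit computation of \cref{ex:semidirect_quotient}.
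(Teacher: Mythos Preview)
Your proposal is correct and follows essentially the same approach as the paper: the paper's argument (given in the paragraph immediately preceding the proposition) invokes the isomorphism $A_{G}^{\otimes n}\otimes A_{X}\cong A_{N}^{\otimes n}\otimes A_{H}^{\otimes n}\otimes A_{X}$ obtained as in \cref{ex:semidirect_quotient}, applies $\categoryformat{QCoh}$, and then uses \cref{res:fubini_holim,res:delta_sifted} exactly as you do. You have in fact spelled out more of the details than the paper does, and you have correctly identified that the only nontrivial point is the simplicial naturality of the pointwise isomorphism, which is precisely what the paper defers to the explicit formula in \cref{ex:semidirect_quotient}.
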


We recall from \textcite{comodules} the statement

\begin{theorem}\label{res:comodules}
	Suppose that
	\( X_{1} \rightrightarrows X_{0} \)
	is a groupoid in affine dg-schemes,
	and consider the associated classifying space~\( X_{\smallbullet} \) given by
	\[\textstyle
		X_{n}
		=
		X_{1}
		\mathbin{\times_{X_{0}}}
		X_{1}
		\mathbin{\times_{X_{0}}}
		\cdots
		\mathbin{\times_{X_{0}}}
		X_{1}
		.
	\]
	Write~\( A^{n} = \smash{A_{X_{n}}} \)
	for the associated cosimplicial system of dg-algebras.
	Let~\( A = \smash{A}^{0} \)
	and~\( C = \smash{A}^{1} \),
	and note that \( C \)~is a
	counital
	coalgebra in~\( \categoryformat{$A$-dgmod-$A$} \)
	via the map~\(
		\Delta 
		=
		\partial_{1}^{\#}
		\colon
		C
		\to
		C\mathbin{\otimes_{A}}C
	\).
	Then we have a quasi-equivalence of dg-categories
	\[\textstyle
		\invlimformat{holim}_{\mathbf{\Delta}} \categoryformat{QCoh} (X_{\smallbullet} )
		\cong
		\categoryformat{$C $-comod}_{\infty}^{\categoryformat{hcu},\categoryformat{formal}} (\categoryformat{$A $-dgmod} )
		,
	\]
	where the right-hand side denotes the dg-category
	of formal, homotopy-counital \ainfty-comodules
	over~\( C \)
	in~\( \categoryformat{$A $-dgmod} \).
\end{theorem}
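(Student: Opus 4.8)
The plan is to compute the homotopy limit as a cosimplicial totalization and then recognize the resulting descent data as formal, homotopy-counital \ainfty-comodules over~$C$. The nerve of the groupoid produces the cosimplicial dg-algebra $A^{\smallbullet}$ with $A^{n}=C\mathbin{\otimes_A}\cdots\mathbin{\otimes_A}C$ ($n$ tensor factors), whose cofaces and codegeneracies are exactly the structure maps making $C$ a counital coalgebra in $\categoryformat{$A $-dgmod-$A $}$ via $\Delta=\partial_{1}^{\#}$; in other words, $A^{\smallbullet}$ is the cobar object of~$C$. The associated cosimplicial diagram $[n]\mapsto\categoryformat{QCoh}(X_{n})=\categoryformat{$A^{n}$-dgmod}$ has transition functors given by restriction and scalar extension along these maps, which by \cref{res:scalar_extension_of_quasi-free,lemma:restriction_preserves_H-Proj} preserve quasi-freeness, so the diagram can be modelled levelwise inside the quasi-free dg-categories $\categoryformat{QF}(A^{n})$.

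Following the homotopy-limit machinery of \textcite{ends}, I would compute $\invlimformat{holim}_{\mathbf{\Delta}}$ via a fibrant replacement of this diagram in Tabuada's model structure and realize it concretely as a totalization, whose objects are homotopy-coherent cosimplicial sections: families $(M_{n})$ with $M_{n}\in\categoryformat{$A^{n}$-dgmod}$ together with all higher coherences filling the cofaces and codegeneracies. The two cofaces $\partial^{0},\partial^{1}\colon A\to C$ (the source and target maps of the groupoid) carry the decisive information: the bottom piece is a module $M=M_{0}\in\categoryformat{$A $-dgmod}$, and a coherent identification of its two pullbacks to~$C$ is the same datum as a coaction $\mathup{ca}\colon M\to C\mathbin{\otimes_A}M$.

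The heart of the argument is to show that the full tower of higher coherences assembles into an \ainfty-comodule structure, and here I would reuse the bar--cobar and twisting-cochain formalism developed earlier. Dually to the \ainfty-module picture, an \ainfty-comodule over~$C$ is a dg-module over the cobar algebra $\operatorname{Cobar}(C)$ whose underlying graded module is free, i.e.\ of the form $\operatorname{Cobar}(C)\mathbin{\otimes_A}M$, with differential encoded by higher coaction maps $\mathup{ca}_{n}\colon M\to C^{\otimes_A(n-1)}\mathbin{\otimes_A}M$. The claim is that the normalized totalization of the cosimplicial $A$-dg-module $A^{\smallbullet}\mathbin{\otimes_A}M$ is exactly this cobar complex: the cocycle condition at level two (governed by $\Delta=\partial_{1}^{\#}$ on~$X_{2}$) produces coassociativity, and the codegeneracies produce counitality, so that after totalization the coherences become precisely the defining \ainfty-comodule relations. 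The \emph{formal} condition records the freeness of the underlying graded $\operatorname{Cobar}(C)$-module (equivalently, the passage to the reduced cobar object), while \emph{homotopy-counital} records that the counit acts as a unit on $H^{\smallbullet}$. Morphisms are matched in the same manner, computing hom-complexes via the comodule analogues of the adjunction and bar-resolution arguments behind \cref{res:der_as_A_infty_modules}.

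I expect the principal obstacle to be this final translation: converting the abstract homotopy-coherent section data produced by the fibrant replacement into the explicit \ainfty-comodule structure maps, with the correct signs and internal degrees, and then verifying that the subcategory conditions \enquote{formal} and \enquote{homotopy-counital} carve out precisely the essential image rather than something larger or smaller. This is careful but essentially routine cobar--totalization bookkeeping, modelled on \textcite{explicit}; the one genuinely delicate point is ensuring that the cosimplicial normalization matches the reduction built into the formal, homotopy-counital comodule category, so that the counit and the degeneracies are accounted for compatibly on the two sides.
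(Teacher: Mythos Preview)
The paper does not give its own proof of this statement; it is introduced with ``We recall from \textcite{comodules} the statement'' and is quoted from the authors' earlier paper, which in turn extends the method of \textcite{explicit}. So there is nothing in the present paper to compare your argument against.

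Your sketch is the expected shape of the argument in those references: compute the homotopy limit as a cosimplicial totalization, observe that $A^{\smallbullet}$ is the cobar object of the coalgebra~$C$, and identify homotopy-coherent descent data with \ainfty-comodule structures. That is correct in outline, and you are right that the bookkeeping of signs and coherences is the main labour.

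One genuine correction: your reading of the qualifier ``formal'' is off. You interpret it as recording freeness of the underlying graded $\operatorname{Cobar}(C)$-module, i.e.\ as part of the definition of an \ainfty-comodule. But later in the paper, in the proof of \cref{res:main_theorem}, the authors write that ``the `formal' attribute becomes redundant in this case, as \ainfty-modules have no similar convergence condition.'' So ``formal'' is a \emph{convergence} condition specific to the comodule side---roughly, controlling where the infinite family of higher coactions $\mathup{ca}_{n}$ lands (a completed versus algebraic tensor product)---and it disappears once one dualizes to modules over a finite-rank coalgebra. This is precisely the ``delicate point'' you flag at the end, but your proposed resolution (matching it to the reduced cobar object) is not the right one; you would need to identify it instead as the convergence constraint that makes the totalization honest.
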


\begin{remark}\label{rem:holim_of_qf}
	Suppose that we replace \( \categoryformat{QCoh} (X_{\smallbullet} ) \)
	by~\( \mathscr{D} \categoryformat{QCoh} (X_{\smallbullet} ) \).
	We may
	realize this as~\(
		\mathscr{D} \categoryformat{QCoh} (X_{\smallbullet} )
		=
		\categoryformat{QF} (\categoryformat{QCoh} (X_{\smallbullet} ) )
	\).
	Since the pullbacks are exact and have exact right adjoints, they transform
	quasi-free objects into quasi-free objects.
	This gives a direct description of the derived functors.
	Therefore,
	one may repeat the proof of the theorem above to obtain
	the realization
	\begin{align*}\textstyle
		\invlimformat{holim}_{\mathbf{\Delta}} \mathscr{D} \categoryformat{QCoh} (X_{\smallbullet} )
		&
		\cong
		\categoryformat{$C $-comod}_{\infty}^{\categoryformat{hcu},\categoryformat{formal}} (\categoryformat{QF} (\categoryformat{$\mathscr{A} $-dgmod} ) )
	\end{align*}
	for the homotopy limit of the cosimplicial system of derived dg-categories.
	In particular, derived dg-categories commute with equivariance.
\end{remark}

\endgroup

\chapter{Equivariant sheaves on loop spaces}

Let~\( X \) ba an affine, smooth, and Noetherian scheme over~\( \mathbb{C} \).
Inspired by the
Hochschild--Kostant--Rosenberg theorem, we define the
\textdef{loop space} of~\( X \) as the
affine dg-scheme~\(
	LX 
	=
	(X ,\Omega_{X} )
\),
where \( \Omega_{X} \)~is the non-positively graded algebra of differential forms
regarded as a dg-algebra with differential~\( d = 0 \).
If \( x \in X \)~is a point, the \textdef{based loop space} at~\( x \)
is the dg-scheme~\(
	L_{x} X 
	=
	LX  \mathbin{\times_{X}} x
	=
	(X ,\operatorname{Sym} (T_{x}^{*} X^{\circ} [{1}] ) )
\),
the fibre at~\( x \) of the evaluation map~~\( LX  \to X \).

\begin{example}\label{ex:based_loop_decomposition}
	In the case of a group dg-scheme~\( G \),
	we have~\(
		LG 
		=
		L_{e} G 
		\rtimes
		G
	\),
	and the based loop space is~\(
		L_{e} G 
		=
		\mathfrak{g} [{-1}]
		=
		(\operatorname{Spec} (\mathbb{C} ),\operatorname{Sym} (\mathfrak{g}^{*} [{1}] ) )
	\).
\end{example}

\begin{theorem}\label{res:main_theorem}
	Let~\( X \) ba an affine, smooth, and Noetherian scheme over~\( \mathbb{C} \) acted on by an affine group scheme~\( G \).
	The derived category of \( (G ,\Omega_{G} ) \)-equivariant \( \Omega_{X} \)-dg-modules is equivalent to the triangulated subcategory
	\[
		\langle \mathscr{O}_{X} \rangle
		\subset
		\mathscr{D}
		\bigl(
			\operatorname{Sym}_{\mathscr{O}_{X}} (\mathfrak{g} \mathbin{\otimes} \mathscr{O}_{X} [{-1}] \to{T_{X} [{-2}]} )
			\categoryformat{-dgmod}
		\bigr)^{G}
	\]
	of the derived category of \( G \)-equivariant dg-modules
	generated by \( \mathscr{O}_{X} \) and closed under small coproducts.
\end{theorem}

\begin{proof}
	Because of the decomposition~\(
		LG 
		=
		L_{e} G  \rtimes G
	\) from~\cref{ex:based_loop_decomposition},
	we recall from~\cref{ex:semidirect_equivariant_sheaves} that
	\[
		\smash{\bigl(\mathscr{D} \categoryformat{QCoh} (LX  ) \bigr)}^{LG }
		\cong
		\smash{\bigl(\smash{\bigl(\mathscr{D} \categoryformat{QCoh} (LX  ) \bigr)}^{L_{e} G } \bigr)}^{G}
		.
	\]
	Via \cref{rem:holim_of_qf}, we obtain
	\begin{align*}
		\smash{\bigl(\mathscr{D} \categoryformat{QCoh} (LX  ) \bigr)}^{LG }
			&\cong
			\invlimformat{holim}_{\mathopen[n\mathclose] \in \mathbf{\Delta}} \invlimformat{holim}_{\mathopen[m\mathclose] \in \mathbf{\Delta}} \mathscr{D} (\categoryformat{$\mathscr{O}_{G^{n}} \mathbin{\otimes} \operatorname{Sym} (\mathfrak{g}^{*} [{1}] )^{\otimes m} \mathbin{\otimes} \Omega_{X}$-dgmod} )
	\\
			&\cong
			\invlimformat{holim}_{\mathopen[n\mathclose] \in \mathbf{\Delta}} \bigl(\categoryformat{$C_{n}$-comod}_{\infty}^{\categoryformat{hcu},\categoryformat{formal}} (\categoryformat{QF} (\categoryformat{$R_{n}$-dgmod} ) ) \bigr)
	\end{align*}
	where
	\(
		R_{n} = \mathscr{O}_{G^{n}} \mathbin{\otimes} \Omega_{X}
	\)
	and
	\(
		C_{n}
		=
		\mathscr{O}_{G^{n}}
		\mathbin{\otimes}
		\operatorname{Sym} (\mathfrak{g}^{*} [{1}] )
		\mathbin{\otimes}
		\Omega_{X}
	\).
	The \( R_{n} \)--\( R_{n} \)-bimodule structure on~\( C_{n} \)
	comes from the two face maps
	\[\begin{tikzcd}
		G^{n} \times L_{e} G  \times LX 
		\ar[dr,"{\partial_{0}}"']
			&
				& G^{n} \times L_{e} G  \times LX 
				\ar[dl,"{\partial_{1}}"]
	\\
			& G^{n} \times LX 
	\end{tikzcd}\]
	given by
	\(
		\partial_{0} (g ,\gamma  ,\delta  )
		=
		(g ,\delta  )
	\)
	and~\(
		\partial_{1} (g ,\gamma  ,\delta  )
		=
		(g ,\gamma (\delta  ) )
	\).
	Taking comorphisms, the bimodule structure maps are given by
	\[\begin{tikzcd}
		\mathscr{O}_{G^{n}}
		\mathbin{\otimes}
		\operatorname{Sym} (\mathfrak{g}^{*} [{1}] )
		\mathbin{\otimes}
		\Omega_{X}
		\ar[
			dr,
			"{\partial_{0}^{\#}}"',
			<-,
			end anchor={[yshift=.7em,xshift=-2em]},
		]
			&
				& \mathscr{O}_{G^{n}}
				\mathbin{\otimes}
				\operatorname{Sym} (\mathfrak{g}^{*} [{1}] )
				\mathbin{\otimes}
				\Omega_{X}
				\ar[
					dl,
					"{\partial_{1}^{\#}}",
					<-,
					end anchor={[yshift=.7em,xshift=2em]},
				]
	\\
			&
			\mathclap{
				\mathscr{O}_{G^{n}}
				\mathbin{\otimes}
				\Omega_{X}
			}
	\end{tikzcd}\]
	where \(
		\partial_{0}^{\#}
		=
		\mathup{id}_{\mathscr{O}_{G^{n}}}
		\mathbin{\otimes}
		1
		\mathbin{\otimes}
		\mathup{id}_{\Omega_{X}}
	\)~provides the right module structure,
	whereas
	\(
		\partial_{1}^{\#}
		=
		\mathup{id}_{\mathscr{O}_{G^{n}}}
		\mathbin{\otimes}
		\mathup{ca}
	\)~provides the left module structure;
	here,
	\[
		\mathup{ca}
		\colon
		\Omega_{X}
		\to
		\operatorname{Sym} (\mathfrak{g}^{*} [{1}] )
		\mathbin{\otimes}
		\Omega_{X}
	\]
	denotes the coaction map.
	This coaction map can be made explicit:
	The action of~\( G \) on~\( X \)
	yields a map
	\[
		(\mathfrak{g} \mathbin{\otimes} \mathscr{O}_{X} )
		\oplus
		TX 
		\longto
		TX 
	\]
	given fibrewise by
	\(
		d\rho _{e ,x}
		\colon
		\mathfrak{g}
		\oplus
		T_{x} X 
		\longto
		T_{x} X 
	\),
	\(
		(v ,\xi  )
		\longmapsto
		\xi  + d(\rho (x ) )_{e} (v )
	\),
	where \( \rho (x ) \colon G \to X \)
	is the map~\( g \mapsto g x \).
	This may be dualized to a map
	\[
		T^{*} X 
		\longto
		(\mathfrak{g}^{*} \mathbin{\otimes} \mathscr{O}_{X} )
		\oplus
		T^{*} X ,
		\qquad
		\omega 
		\longmapsto
		\varphi (\omega  ) + \omega ,
		\qquad
		\text{ where }
		\varphi 
		\colon
		T^{*} X 
		\longto
		\mathfrak{g}^{*} \mathbin{\otimes} \mathscr{O}_{X}
		.
	\]
	Denoting~\( \operatorname{Sym} (\varphi  ) \) also by~\( \varphi  \),
	this provides us with the map
	\[
		\mathup{ca}
		\colon
		\operatorname{Sym}_{\mathscr{O}_{X}} (T^{*} X [{1}] )
		\longto
		\operatorname{Sym}_{\mathscr{O}_{X}} (\mathfrak{g}^{*} \mathbin{\otimes} \mathscr{O}_{X} [{1}] )
		\mathbin{\otimes_{\mathscr{O}_{X}}}
		\operatorname{Sym}_{\mathscr{O}_{X}} (T^{*} X [{1}] )
	\]
	which is our coaction map
	\[
		\mathup{ca}
		\colon
		\Omega_{X}
		\longto
		\operatorname{Sym} (\mathfrak{g}^{*} [{1}] )
		\mathbin{\otimes}
		\Omega_{X},
		\qquad
		\omega 
		\longmapsto
		1 \mathbin{\otimes} \omega 
		+
		\varphi (\omega  ) \mathbin{\otimes} 1
		.
	\]
	
	The coalgebra structure on~\( C_{n} \) comes from the composition
	\[
		\partial_{1}
		\colon
		G^{n} \times L_{e} G  \times LX 
		\mathbin{\times_{G^{n} \times LX }}
		G^{n} \times L_{e} G  \times LX 
		\longto
		G^{n} \times L_{e} G  \times LX ,
	\]
	taking a pair~\(
		((g ,\gamma_{2},\delta_{2} ),(g ,\gamma_{1},\delta_{1} ) )
	\)
	with~\( \delta_{2} = \gamma_{1} (\delta_{1} ) \)
	to~\(
		(g ,\gamma_{2} \gamma_{1},\delta_{1} )
	\).
	In other words,
	the coalgebra map~\(
		\Delta 
		=
		\partial_{1}^{\#}
		\colon
		C_{n}
		\to
		C_{n} \mathbin{\otimes_{R_{n}}} C_{n}
	\)
	is
	given by
	\[\begin{tikzcd}
			\mathllap{ C_{n} = {} }
			\mathscr{O}_{G^{n}}
			\mathbin{\otimes}
			\operatorname{Sym} (\mathfrak{g}^{*} [{1}] )
			\mathbin{\otimes}
			\Omega_{X}
			\ar[d]
				& x \mathbin{\otimes} y \mathbin{\otimes} z
				\ar[d,mapsto]
	\\
			\mathscr{O}_{G^{n}}
			\mathbin{\otimes}
			\operatorname{Sym} (\mathfrak{g}^{*} [{1}] )
			\mathbin{\otimes}
			\operatorname{Sym} (\mathfrak{g}^{*} [{1}] )
			\mathbin{\otimes}
			\Omega_{X}
			\ar[d,equal]
				& x \mathbin{\otimes} ( 1 \mathbin{\otimes} y + y \mathbin{\otimes} 1 ) \mathbin{\otimes} z
				\ar[d,mapsto]
	\\
			C_{n} \mathbin{\otimes_{R_{n}}} C_{n}
				& ( 1 \mathbin{\otimes} 1 \mathbin{\otimes} 1 ) \mathbin{\otimes} ( x \mathbin{\otimes} y \mathbin{\otimes} z )
	\\[-2em]
				& {} + ( 1 \mathbin{\otimes} y \mathbin{\otimes} 1 ) \mathbin{\otimes} ( x \mathbin{\otimes} 1 \mathbin{\otimes} z )
	\invdot\end{tikzcd}\]

	Now the coalgebra~\( C_{n} \)
	is free of finite rank as a right dg-module over the dg-algebra~\( R_{n} \).
	Therefore, we may consider the algebra
	\(
		A_{n}
		= \operatorname{Hom}_{\categoryformat{mod-$R_{n}$}} (C_{n},R_{n} )
	\)
	in the category~\(
		\categoryformat{$ { \mathscr{O}_{G^{n}} \mathbin{\otimes} \Omega_{X} } $-dgmod-$ { \mathscr{O}_{G^{n}} \mathbin{\otimes} \Omega_{X} } $}
	\).
	By adjunction, we get a quasi-equivalence of dg-categories
	\[
		\categoryformat{$C_{n}$-comod}_{\infty}^{\categoryformat{hcu},\categoryformat{formal}} (\categoryformat{QF} (\categoryformat{$R_{n}$-dgmod} ) )
		\cong
		\categoryformat{$A_{n}$-mod}_{\infty}^{\categoryformat{hu}} (\categoryformat{QF} (\categoryformat{$R_{n}$-dgmod} ) )
	\]
	between~\( \categoryformat{$C_{n}$-comod}_{\infty}^{\categoryformat{hcu},\categoryformat{formal}} \) and the category
	of homotopy-unital \ainfty-modules over~\( A_{n} \).
	The \enquote{formal} attribute becomes redundant in this case, as \ainfty-modules have no similar convergence condition.
	We then apply \cref{res:der_as_A_infty_modules,res:A_infty_hu_is_A_infty} to obtain that the right-hand side is a presentation of the derived
	dg-category~\( \mathscr{D} (\categoryformat{$R_{n}$-dgmod} ) \).
	We sum up the conclusion so far:
	\begin{lemma}
		\(
			(\mathscr{D} \categoryformat{QCoh} (LX  ) )^{LG }
			\cong
			\invlimformat{holim}_{\mathopen[n\mathclose] \in \mathbf{\Delta}} \mathscr{D} (\categoryformat{$A_{n}$-dgmod} )
		\).
	\end{lemma}

	We claim that the algebra~\( A_{n} \) has a a description as a Heisenberg algebra:
	
	\begin{lemma}
		The algebra~\( A_{n} \) is the Heisenberg algebra
		\[
			A_{n}
			=
			\operatorname{Heis}_{\mathscr{O}_{{G^{n}} \times X}} ({M_{n} [{1}]} \oplus{N_{n} [{-1}]} )
		\]
		where
		\[
			M_{n}
			=
			\mathscr{O}_{G^{n}}
			\mathbin{\otimes}
			T^{*} X 
			\qquad\text{and}\qquad
			N_{n}
			=
			\mathscr{O}_{{G^{n}} \times X}
			\mathbin{\otimes}
			\mathfrak{g}
		\]
		and the pairing is induced by the map~\(
			\varphi 
			\colon
			M_{n}
			\to
			N_{n}^{*}
		\)
		from above.
	\end{lemma}

	\begin{proof}
		The underlying complex of~\( A_{n} \) is
		\[
			A_{n}
			=
			\mathscr{O}_{G^{n}}
			\mathbin{\otimes}
			\operatorname{Sym} (\mathfrak{g} [{-1}] )
			\mathbin{\otimes}
			\Omega_{X}
			.
		\]
		For an~\( a \in A_{n} \),
		we have, by the comodule structure on~\( C_{n} \),
		that~\(
			a(x \mathbin{\otimes} y \mathbin{\otimes} z )
			=
			a(1\mathbin{\otimes} y \mathbin{\otimes} 1) \cdot x \mathbin{\otimes} z
		\)
		for all elements~\( x \mathbin{\otimes} y \mathbin{\otimes} z \in C_{n} \).
		Therefore, \( a \)~is determined by the
		element~\( a(1\mathbin{\otimes} y \mathbin{\otimes} 1) \)
		for~\( y \in \operatorname{Sym} (\mathfrak{g}^{*} [{1}] ) \). By the above coalgebra
		structure, we have for~\( a , b \in A_{n} \) that
		\[
			(a \cdot b )(1\mathbin{\otimes} y \mathbin{\otimes} 1)
			=
			a(1\mathbin{\otimes} 1\mathbin{\otimes} 1) b(1\mathbin{\otimes} y \mathbin{\otimes} 1)
			+
			a(1\mathbin{\otimes} y \mathbin{\otimes} 1) b(1\mathbin{\otimes} 1\mathbin{\otimes} 1)
		\]
		so~\( a \cdot b = a \wedge b \), and we recover
		the free multiplication on~\( \operatorname{Sym} (\mathfrak{g} [{-1}] ) \).

		The left \( R_{n} \)-multiplication on~\( A_{n} \) is given
		by the left \( R_{n} \)-multiplication on~\( R_{n} \).
		The right \( R_{n} \)-multiplication is given by the left
		\( R_{n} \)-multiplication on~\( C_{n} \).
		To rewrite this, let \(
			r
			\in
			\mathscr{O}_{G^{n}}
			\mathbin{\otimes}
			T^{*} X [{1}]
			\subset
			R_{n}
		\)
		and~\(
			a
			\in
			\mathscr{O}_{G^{n}}
			\mathbin{\otimes}
			\mathfrak{g} [{-1}]
			\mathbin{\otimes}
			1
			\subset
			A_{n}
		\).
		Then the map~\(
			a
			\colon
			\mathscr{O}_{G^{n}}
			\mathbin{\otimes}
			\mathfrak{g}^{*} [{1}]
			\mathbin{\otimes}
			\Omega_{X}
			\to
			\mathscr{O}_{G^{n}}
			\mathbin{\otimes}
			\Omega_{X}
		\)
		kills all elements except those of the form~\( x \mathbin{\otimes} y \mathbin{\otimes} z \)
		for~\(
			y
			\in
			\mathfrak{g}^{*} [{1}]
			\subset
			\operatorname{Sym} (\mathfrak{g}^{*} [{1}] )
		\).
		The element~\( \varphi (r ) \)
		is of this form.
		It follows that
		we have
		\begin{align*}
			a \cdot r 
				&=
				a(\mathup{ca} (r ) \cdot \slot )
					=
					a\bigl((r +\varphi (r ) ) \cdot \slot \bigr)
					=
					(-1)^{ \rvert r \mathclose \rvert  \rvert \slot \mathclose \rvert  }
					a(\slot \cdot r )
					+
					\langle \varphi (r ),a \rangle 
		\\
			&
				=
				(-1)^{ \rvert r \mathclose \rvert  \rvert \slot \mathclose \rvert  }
				a(\slot ) \cdot r
				+
				\langle \varphi (r ),a \rangle 
					=
					(-1)^{ \rvert r \mathclose \rvert \rvert \slot \mathclose \rvert  }
					(-1)^{ \rvert r \mathclose \rvert  ( \rvert a \mathclose \rvert  + \rvert \slot \mathclose \rvert  ) }
					r \cdot a(\slot )
					+
					\langle \varphi (r ),a \rangle 
		\\
			&
			=
			(-1)^{\rvert a \mathclose \rvert \rvert r \mathclose \rvert }
			r\cdot a
			+
			\langle \varphi (r ),a \rangle 
		\end{align*}
		(note that \( \rvert a \mathclose \rvert =1 \) and~\( \rvert r \mathclose \rvert  = -1 \), but
		they have been kept in the equation for clarity).
		In other words, we obtain the desired Heisenberg algebra structure.
	\end{proof}
	
	Applying the Koszul duality statement of \cref{res:Koszul_duality}
	over the base~\( S_{n} = \operatorname{Sym}_{\mathscr{O}_{{G^{n}} \times X}} (N[{-1}] ) \),
	we obtain a quasi-equivalence of dg-categories
	\[
		\mathscr{D} (\categoryformat{$A_{n}$-dgmod} )
		\cong
		\mathscr{D} (\categoryformat{$A_{n}^{!}$-dgmod} )_{\categoryformat{tors}}
	\]
	where
	\begin{align*}
		A_{n}^{!}
			&= \operatorname{Sym}_{\mathscr{O}_{{G^{n}} \times X}} \bigl({N_{n} [{-1}]} \xrightarrow{\varphi^{*}}{M_{n}^{*} [{-2}]} \bigr)
	\\
			&= \operatorname{Sym}_{\mathscr{O}_{{G^{n}} \times X}} \bigl(\mathscr{O}_{G^{n}} \mathbin{\otimes} \mathfrak{g} \mathbin{\otimes}{\mathscr{O}_{X} [{-1}]} \xrightarrow{\varphi^{*}} \mathscr{O}_{G^{n}} \mathbin{\otimes} TX  [{-2}] \bigr)
			.
	\end{align*}
	Thus
	\begin{align*}
	\smash{\bigl(\mathscr{D} \categoryformat{QCoh} (LX  ) \bigr)}^{LG }
		&\cong
		\invlimformat{holim}_{\mathopen[n\mathclose] \in \mathbf{\Delta}} \mathscr{D} (\categoryformat{$A_{n}^{!}$-dgmod} )_{\categoryformat{tors}}
	\\
		&\cong
			\mathscr{D} \bigl(\categoryformat{$\operatorname{Sym}_{\mathscr{O}_{X}} (\mathfrak{g} \mathbin{\otimes}{\mathscr{O}_{X} [{-1}]} \xrightarrow{\varphi^{*}} TX [{-2}] )$-dgmod} \bigr)_{\categoryformat{tors}}^{G}
	\end{align*}
	which is what we wanted to prove.
\end{proof}

\chapter*{Acknowledgements}

Part of this work was carried out under the
support of
Israel Science Foundation Grant 786/19 of Professor Vladimir Hinich.

\setcounter{biburllcpenalty}{7000}
\setcounter{biburlucpenalty}{8000}

\printbibliography

\end{document}